\newtheorem{theorem}{Theorem}[section]
\newtheorem{lemma}[theorem]{Lemma}
\newtheorem{proposition}[theorem]{Proposition}
\newtheorem{corollary}[theorem]{Corollary}
\theoremstyle{definition}
\newtheorem{remark}[theorem]{Remark}
\numberwithin{equation}{section}
\newcommand{\R}{\mathbf{R}}
\newcommand{\cB}{\mathcal{B}}
\newcommand{\cI}{\mathcal{I}}
\newcommand{\cJ}{\mathcal{J}}
\newcommand{\cM}{\mathcal{M}}
\newcommand{\cE}{\mathcal{E}}
\newcommand{\cQ}{\mathcal{Q}}
\newcommand{\eps}{\varepsilon}
\newcommand{\loc}{{\rm loc}}
\DeclareMathOperator{\supp}{\operatorname{supp}}
\DeclareMathOperator{\card}{\operatorname{card}}
\begin{document}

\title[Critical norm blow-up]{Critical norm blow-up 
for the energy supercritical nonlinear heat equation}

\author[H. Miura]{Hideyuki Miura}
\address{Department of Mathematics, 
Tokyo Institute of Technology, Tokyo 152-8551, Japan}
\email{hideyuki@math.titech.ac.jp}

\author[J. Takahashi]{Jin Takahashi}
\address{Department of Mathematical and Computing Science, 
Tokyo Institute of Technology, Tokyo 152-8552, Japan}
\email[Corresponding author]{takahashi@c.titech.ac.jp}

\begin{abstract}
We address the critical norm blow-up problem 
for the nonlinear heat equation 
$u_t-\Delta u=|u|^{p-1}u$ in $\R^n\times(0,T)$. 
In the supercritical range $p>(n+2)/(n-2)$, 
we prove that if the maximal existence time $T$ is finite, 
then $\lim_{t\to T}\|u(\cdot,t)\|_{L^{n(p-1)/2}(\R^n)} =\infty$ 
without assuming extra conditions such as radial symmetry 
or the type of blow-up. 
\end{abstract}

\maketitle

\section{Introduction}
We study blow-up solutions of the following 
nonlinear heat equation: 
\begin{equation}\label{eq:main}
\left\{
\begin{aligned}
	&u_t-\Delta u=|u|^{p-1}u &&\mbox{ in }\R^n\times(0,T), \\
	&u(\cdot,0)=u_0 &&\mbox{ in }\R^n, 
\end{aligned}
\right.
\end{equation}
where $p>1$ and $n\geq1$. 
This equation has an invariance under 
the scaling $u(x,t)\mapsto \lambda^{2/(p-1)} u(\lambda x, \lambda^2 t)$ 
for $\lambda>0$ and is referred as one of the most typical model of 
scaling invariant scalar nonlinear parabolic equations. 
It is known \cite{BC96,We80} that a local-in-time classical solution
exists uniquely in the scaling critical space $L^{q_c}(\R^n)$ if $q_c > 1$, where 
\[
	q_c:=\frac{n(p-1)}{2}. 
\]

If the maximal existence time $T$ is finite, 
the blow-up occurs in the sense that 
$\lim_{t\to T}\|u(\cdot,t)\|_{L^\infty(\R^n)}=\infty$. 
It was shown \cite{BC96,We80} that $L^q$ norms also blow up for $q>q_c$. 
For $q<q_c$, blow-up solutions with bounded $L^q$ norms 
exist \cite{FM85}. 
These lead to the critical norm blow-up problem: 
If $T<\infty$, does 
\begin{equation}\label{eq:CNB}\tag{CNB}
	\lim_{t\to T}\|u(\cdot,t)\|_{L^{q_c}(\R^n)} =\infty
\end{equation}
hold? 
The problem was stated in Brezis and Cazenave \cite[Open problem 7]{BC96} 
and was unsolved except for some particular cases. 
Recently, in the significant work of Mizoguchi and Souplet \cite{MS19}, 
they discovered that 
the type~I blow-up implies \eqref{eq:CNB} for all $p>1$, where
the blow-up is of type~I if 
$\limsup_{t\to T} (T-t)^{1/(p-1)}\|u(\cdot,t)\|_{L^\infty(\R^n)}<\infty$ 
and type~II if it is not of type~I. 
By \cite{GK87,GMS04}, the blow-up is type~I 
in the Sobolev subcritical range $p<p_S$, 
and thus \eqref{eq:CNB} always holds for $p<p_S$. 
Here $p_S:=\infty$ for $n=1,2$ and 
\[
	p_S:=\frac{n+2}{n-2} \quad \mbox{ for }n\geq3. 
\]
In the critical case $p=p_S$, 
several counterexamples are known. 
In particular, for $3\leq n\leq 5$, 
type~II blow-up solutions with bounded critical norm were constructed 
in \cite{dPMW19,dPMWZZpre,Sc12}.

As for the supercritical range $p> p_S$, although
we have confirmed \cite{MTpre} that the weaker property 
\[
	\limsup_{t\to T}\|u(\cdot,t)\|_{L^{q_c}(\R^n)} =\infty 
\]
holds for general blow-up solutions, 
it remains open that whether the original statement \eqref{eq:CNB} holds or not. 
The goal of this paper is to give the following complete answer.

\begin{theorem}\label{th:main}
Let $n\geq3$, $p>p_S$ and $u$ be a classical solution 
of \eqref{eq:main} with $u_0\in L^{q_c}(\R^n)$. 
If the maximal existence time $T>0$ is finite, then 
\[
	\lim_{t\to T}\|u(\cdot,t)\|_{L^{q_c}(\R^n)}=\infty. 
\]
\end{theorem}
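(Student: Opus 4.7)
I would argue by contradiction. Assume $M:=\liminf_{t\to T^-}\|u(\cdot,t)\|_{L^{q_c}}<\infty$; combined with the result from \cite{MTpre} that $\limsup_{t\to T^-}\|u(\cdot,t)\|_{L^{q_c}}=\infty$, the critical norm must then oscillate wildly as $t\to T$. Fix a sequence $t_k\nearrow T$ with $\|u(\cdot,t_k)\|_{L^{q_c}}\le M+1$ and a blow-up point $a\in\R^n$. I would rescale parabolically around $(a,T)$: with $\lambda_k:=\sqrt{T-t_k}$, define
\[
u_k(y,s):=\lambda_k^{2/(p-1)}\,u\bigl(a+\lambda_k y,\;t_k+\lambda_k^2 s\bigr).
\]
Each $u_k$ solves \eqref{eq:main}, is defined on the expanding slab $\R^n\times(-t_k/\lambda_k^2,\,1)$, blows up at $s=1$, and inherits $\|u_k(\cdot,0)\|_{L^{q_c}}\le M+1$ by scale invariance of $L^{q_c}$.

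The next step is to extract a non-trivial subsequential ancient solution $v$ on $\R^n\times(-\infty,1)$. Weak compactness of $u_k(\cdot,0)$ in $L^{q_c}$ is immediate, and interior parabolic regularity (away from $s=1$) yields uniform estimates on compact subsets. However, $L^{q_c}$ is the scaling-critical space, so bounded sequences may concentrate or spread and weak limits can lose mass. I would therefore use a Bahouri--Gérard type profile decomposition of $u_k(\cdot,0)$ adapted to the scaling $\phi\mapsto\lambda^{2/(p-1)}\phi(\lambda\cdot)$, combined with the $L^{q_c}$ local well-posedness theory of \cite{BC96,We80} applied profile by profile. Orthogonality of the scales and centers should force the nonlinear flow to split into essentially independent bubbles, and the one responsible for the blow-up at $s=1$ would yield the limit $v$ with $\|v(\cdot,0)\|_{L^{q_c}}\le M+1$.

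The final step is rigidity. The type~I alternative is already excluded by \cite{MS19} (which contradicts the boundedness of $\|u(\cdot,t_k)\|_{L^{q_c}}$ directly), so the extracted $v$ must correspond to a type~II blow-up. In the supercritical range $p>p_S$, the structure of type~II profiles is highly constrained and the natural asymptotic is the singular steady state $c_p|x|^{-2/(p-1)}$, which fails to lie in $L^{q_c}$; this should be incompatible with the uniform $L^{q_c}$ bound carried by $v$ at $s=0$, closing the contradiction. The main obstacle is precisely this rigidity step: turning a merely bounded critical norm into an incompatibility with supercritical type~II blow-up without invoking radial symmetry, positivity, or a priori pointwise control. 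A secondary difficulty is justifying the profile decomposition in $L^{q_c}$ so that orthogonality is preserved by the nonlinear parabolic flow, which is where I expect the bulk of the technical work to lie.
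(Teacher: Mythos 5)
Your setup is correct, and you have correctly located the two hard points (compactness in the critical space and rigidity of the limit), but the proposed resolutions of both would not go through, and the paper takes a substantively different route.

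On compactness: a Bahouri--G\'erard profile decomposition in $L^{q_c}$ for the nonlinear heat flow is not an available tool here; even the linear decomposition in $L^{q}$ spaces (as opposed to $\dot H^s$) is delicate, and propagating orthogonality of scales and cores through the full nonlinear parabolic evolution is precisely the kind of thing the paper says it cannot do directly. The paper instead works with rescalings $u_k(x,t)=(-T_k)^{1/(p-1)}u(\sqrt{-T_k}\,x,(-T_k)t)$ along the bounded sequence $t=T_k$, establishes uniform \emph{local} bounds from the Giga--Kohn weighted monotonicity (Lemmas \ref{lem:unibE}--\ref{lem:ukuni1}), extracts a weak limit $\overline{u}$, and measures the failure of strong convergence by defect measures $\mu$, $\mu_t$ (Section \ref{sec:defect}). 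The monotonicity forces both $\overline{u}$ and the defect measure to be backward self-similar (Lemmas \ref{lem:BSS}, \ref{lem:BSSmu}); since $\mu_{-1}=0$ because $u_k(\cdot,-1)$ converges strongly (this is where the $L^{q_c}$ bound along $T_k$ enters), self-similarity propagates $\mu=0$ to all times, giving strong $L^{p+1}_{\rm loc}$ and $\dot H^1_{\rm loc}$ convergence (Proposition \ref{pro:stcon}). This is the compactness mechanism; it has nothing to do with profile decompositions.

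On rigidity: your claim that in the supercritical range type~II profiles are ``highly constrained'' and must look like the singular steady state $c_p|x|^{-2/(p-1)}$ is not a theorem in the generality needed here --- no such classification exists for nonradial, sign-changing solutions, and the paper explicitly refuses to assume radial symmetry, positivity, or pointwise bounds. The paper's rigidity argument is entirely different: one first proves a refined $\eps$-regularity theorem (Theorem \ref{th:epsreg}, requiring smallness of the scaled $L^{p+1}$ norm only), which gives nondegeneracy $\overline{u}\not\equiv0$ (Lemma \ref{lem:noshrink}); then one shows $\overline{u}(\cdot,0)=0$ (Lemma \ref{lem:finbul}) and that $\overline{u}$ is bounded outside a large ball (Lemma \ref{lem:bddovu}) and has only finitely many singular points at each time (Lemma \ref{lem:pareg}). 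The contradiction is then closed by the Escauriaza--Seregin--\v{S}ver\'ak backward uniqueness theorem together with unique continuation for the self-similar elliptic equation satisfied by $\overline{U}=\overline{u}(\cdot,-1)$. Without this backward-uniqueness/unique-continuation step, the self-similar limit is not ruled out, and an ``$L^{q_c}$ bound is incompatible with a singular profile'' argument cannot substitute for it because the profile is not known a priori to be singular.

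So the gap is genuine: both the compactness step and the rigidity step, as you have written them, rest on tools that either do not exist in this setting or have not been established at the required level of generality, whereas the paper's Giga--Kohn monotonicity + defect measure + $\eps$-regularity + backward uniqueness chain is what actually carries the proof.
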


The behavior of the critical norms near
the blow-up time is extensively studied 
for various equations. 
Among others, 
for the three dimensional Navier-Stokes equations (NS), 
Escauriaza, Seregin and \v{S}ver\'{a}k \cite{ESS03} 
showed the unboundedness of the critical norm, 
that is, $\limsup_{t\to T} \|u(\cdot,t)\|_{L^3}=\infty$. 
Seregin \cite{Se12} proved that 
$\lim_{t\to T} \|u(\cdot,t)\|_{L^3}=\infty$, 
the blow-up of the critical norm. 
However, especially in the nonradial setting, 
only few works addressed the blow-up 
of the critical norm as in \cite{Se12} rather than the unboundedness. 
The main problem is the possibility of oscillation for the critical norm. 
Such a problem is standard and difficult 
as remarked by Merle and Rapha\"el \cite{MR08} 
in the context of the nonlinear Schr\"odinger equation.

We prove Theorem \ref{th:main} by a blow-up analysis 
under the contradiction assumption 
$\liminf_{t\to T} \|u(\cdot,t)\|_{L^{q_c}(\R^n)}<\infty$. 
Compared our proof here with \cite{Se12}, 
there are striking differences in 
(i) the compactness procedure
and (ii) the characterization of the blow-up limit. 
The approach stated below would be applied 
to the other scaling invariant parabolic equations.
We hope to address this subject in future works.

(i) The strong compactness for (NS) in $L^3$ 
can be easily guaranteed by the local energy inequality, 
even when any assumptions on the critical norm are not imposed.
For \eqref{eq:main}, due to the absence of derivatives in the nonlinear term,
the corresponding local energy inequality 
does not imply such a compactness, at least directly. 
Moreover, we could not directly appeal general compactness theorems, 
such as the Aubin--Lions theorem, due to the lack of the uniform boundedness 
in the contradiction assumption. 
We overcome these difficulties by studying the defect measure 
in the weak limit of the rescaled solutions 
motivated by Lin and Wang \cite[Chapters 8, 9]{LWbook}, 
where they studied obstructions for
the global regularity of weak solutions for the harmonic map heat flow. 
We note that, very recently, 
the analysis of the defect measure 
was also successfully applied to an important work of Wang and Wei \cite{WW21pre}
for positive solutions of the equation \eqref{eq:main} with $p=p_S$.

(ii) In \cite{Se12}, 
a certain spatial decay estimate of the local energy for (NS) 
plays a key role to ensure the regularity of the blow-up limit in the far field.
Unfortunately, we could not derive such a useful decay estimate for (1.1).
Instead, we show the self-similarity of the blow-up limit 
from the monotonicity for the Giga--Kohn weighted energy \cite{GK85}. 
Then by our contradiction assumption with its scaling invariance,
we can rule out the oscillation and 
show the decay at the spatial infinity in the sense of local integrals.

In addition to the above differences, 
to obtain the nondegeneracy of the blow-up limit, 
we show an $\eps$-regularity theorem 
under the smallness of the scaled $L^{p+1}$ norm. 
This is an essential refinement of the previous $\eps$-regularity in \cite{CDZ07,MTpre} 
under assuming the smallness of the scaled $L^{p+1}$ and $\dot{H}^1$ norms.

In the rest of this section, we give several remarks 
and state the organization of this paper. 
For further discussion, 
see the remarks in \cite[Subsection 1.2]{MTpre} 
including more comprehensive references.

\begin{remark}[Class of solutions]\label{rem:defsol}
It is well-known \cite{BC96,We80} that 
if $q_c>1$, then for any $u_0\in L^{q_c}(\R^n)$, 
\eqref{eq:main} admits a unique classical 
solution in the class 
$u\in C([0,T); L^{q_c}(\R^n))\cap L^\infty_\loc((0,T);L^\infty(\R^n))$. 
For $q_c>p$, unconditional uniqueness \cite{BC96} also holds 
in $C([0,T); L^{q_c}(\R^n))$. 
We note that $q_c>p$ is automatically satisfied when $p>p_S$. 
\end{remark}

\begin{remark}[Critical Lorentz norm]
In Theorem \ref{th:main}, the Lebesgue norm $L^{q_c}(\R^n)$ can be 
replaced by the Lorentz norm $L^{q_c,r}(\R^n)$ with $r<\infty$, 
but cannot be replaced by $L^{q_c,\infty}(\R^n)$. 
For $p>p_S$ with $3\leq n\leq 10$ and 
$p_S<p<p_L:=(n-4)/(n-10)$ with $n\geq 11$, 
by \cite[Theorem 1.1 (ii)]{Mi02}, there exists 
a radial incomplete blow-up solution with a compactly supported $u_0\geq0$. 
By \cite[Remark 1.16]{MM04}, \cite[Proposition 2.7]{MM09} and 
\cite[Proposition C.1]{MM11}, 
the solution satisfies 
\begin{align}
	&0\leq u(x,t)\leq C|x|^{-\frac{2}{p-1}}, 
	\quad x\in \R^n, 0\leq t<T, \label{eq:unif2p1} \\
	&\sup_{0<t<T} \|u(\cdot,t)\|_{L^{q_c,\infty}(\R^n)} <\infty. 
	\label{eq:Lqcinfbdd}
\end{align}
For $p>p_{JL}:=(n-2\sqrt{n-1})/(n-4-2\sqrt{n-1})$ 
($p_L>p_{JL}>p_S$) with $n\geq11$, 
by \cite[Corollary 3]{MS21}, there exists 
a radial type~II blow-up solution 
with $0\leq u_0(x)\leq C(1+|x|)^{-\alpha}$  ($\alpha>2/(p-1)$). 
We see from \cite[Subsection 4.1]{MM09} and \cite[Proposition C.1]{MM11} 
that \eqref{eq:unif2p1} and \eqref{eq:Lqcinfbdd} are satisfied. 
In contrast, it was shown \cite[Proposition 7.1]{GK87} that  
if $p<p_S$, then 
all radially decreasing positive solutions do not satisfy either 
\eqref{eq:unif2p1} or \eqref{eq:Lqcinfbdd}, 
where the general situation in $p<p_S$ is still unclear as far as the authors know. 
\end{remark}

\begin{remark}[Blow-up profile in critical Lorentz space]
Our arguments in Sections \ref{sec:GKene}, \ref{sec:defect} and  \ref{sec:eps}
are also valid under the boundedness of the $L^{q_c,\infty}$ norm 
by slight modifications. 
Then, as a consequence of Lemma \ref{lem:noshrink} 
(the nondegeneracy of the blow-up limit), 
we obtain a nontrivial backward self-similar solution in the energy space. 

\begin{corollary}
Let $n\geq3$, $p>p_S$ and $u$ be a classical solution of \eqref{eq:main} 
with $u_0\in L^{q_c,\infty}(\R^n)$ and 
the finite maximal existence time $T>0$. 
Assume that $u$ has a blow-up point $x_0\in \R^n$ and satisfies 
\begin{equation}\label{eq:limLo}
	\liminf_{t\to T} \|u(\cdot,t)\|_{L^{q_c,\infty}(\R^n)}<\infty. 
\end{equation}
Then there exists a sequence $\lambda_k\to 0$ as $k\to \infty$ 
such that the rescaled solutions 
$\lambda_k^{2/(p-1)} u(x_0+\lambda_k x, T+\lambda_k^2 t)$ 
converge to $\overline{u}(x,t)\not \equiv 0$ strongly in 
$L^{p+1}_\loc(\R^n\times(-\infty,0))$ 
and $\dot{H}^1_\loc(\R^n\times(-\infty,0))$.  
Here $\overline{u}$ is a weak solution 
in $\R^n\times(-\infty,0)$ 
satisfying 
$\overline{u}(x,t)=\lambda^{2/(p-1)} \overline{u}(\lambda x, \lambda^2 t)$
for any $\lambda>0$. 
\end{corollary}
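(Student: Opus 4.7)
The plan is to follow the blow-up analysis used for Theorem \ref{th:main}, adapted to the $L^{q_c,\infty}$ setting as announced in the remark, and then to read off the conclusion from the nondegeneracy lemma. First I would fix a blow-up point $x_0$ and a sequence of times $t_k\uparrow T$ along which $\|u(\cdot,t_k)\|_{L^{q_c,\infty}(\R^n)}$ stays bounded, using the assumption \eqref{eq:limLo}. I would then choose scaling parameters $\lambda_k\to 0$ naturally adapted to the concentration of $u$ at $x_0$ (for instance so that $\lambda_k^{2/(p-1)}\|u(\cdot,t_k)\|_{L^\infty}$ is of unit order, or via the Giga--Kohn similarity variable centered at $(x_0,T)$), and set
\[
u_k(x,t):=\lambda_k^{2/(p-1)}u(x_0+\lambda_k x,\,T+\lambda_k^2 t),
\qquad (x,t)\in\R^n\times(-\lambda_k^{-2}T,0).
\]

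Next I would pass to a limit. The scaling invariance of the $L^{q_c,\infty}$ norm transfers the uniform bound from \eqref{eq:limLo} to $u_k$ on slices $\{t=s_k\}$ with $s_k\to -0$, and this uniform control is exactly the hypothesis under which the machinery of Sections \ref{sec:GKene}--\ref{sec:eps} is claimed to remain valid. I would therefore invoke: (a) the local energy inequality combined with standard interpolation to get uniform $L^{p+1}_\loc$ and $\dot H^1_\loc$ bounds for $u_k$ on parabolic cylinders; (b) the defect-measure framework to extract a subsequence converging weakly in $L^{p+1}_\loc\cap\dot H^1_\loc$ to some weak solution $\overline u$ of \eqref{eq:main}, together with a defect measure $\mu$ supported on a small singular set; (c) the refined $\eps$-regularity theorem under smallness of the scaled $L^{p+1}$ norm, which, away from this singular set, upgrades the convergence to strong convergence in $L^{p+1}_\loc$ and $\dot H^1_\loc$. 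Using the $\eps$-regularity globally (where it applies) and showing that $\mu\equiv 0$ outside a negligible set, the convergence is in fact strong everywhere.

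For the self-similarity, I would apply the Giga--Kohn monotonicity formula to the rescaled sequence and pass to the limit: because the weighted energy is monotone and the scaling-invariant contradiction assumption rules out energy loss between different scales, the entropy of $\overline u$ is constant in the similarity variable, which forces the Euler--Lagrange identity $\overline u(x,t)=\lambda^{2/(p-1)}\overline u(\lambda x,\lambda^2 t)$ for every $\lambda>0$. Nontriviality $\overline u\not\equiv 0$ is precisely the content of Lemma \ref{lem:noshrink}: because $x_0$ is a blow-up point, there is a definite amount of energy that cannot be lost in the rescaling limit, so the limit cannot be identically zero.

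I expect the main obstacle to be step (c): carrying the defect-measure analysis and the $\eps$-regularity theorem from the $L^{q_c}$ setting to the weaker $L^{q_c,\infty}$ setting. In the $L^{q_c}$ case, one can use absolute continuity of the critical Lebesgue norm to exclude concentration at a fixed time, whereas Lorentz-weak norms do not enjoy this absolute continuity, so one must instead rely on the nondegeneracy Lemma \ref{lem:noshrink} and on a careful bookkeeping of how the defect measure interacts with the local $L^{p+1}$ smallness driving $\eps$-regularity. Once these ``slight modifications'' promised in the remark are in place, strong convergence on compact sets, self-similarity, and nontriviality of $\overline u$ are obtained simultaneously, which is exactly the statement of the corollary.
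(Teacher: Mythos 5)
Your proposal correctly identifies the high-level roadmap (rescale at the Giga--Kohn scale, pass to a defect-measure limit, prove self-similarity via monotonicity, and invoke Lemma~\ref{lem:noshrink} for nondegeneracy), but it misidentifies the mechanism behind the strong convergence, which is the crux of Proposition~\ref{pro:stcon}. In the paper, the strong $L^{p+1}_\loc$ and $\dot H^1_\loc$ convergence comes from showing that the defect measure $\mu$ is \emph{identically zero}, and this is proved by a specific two-step argument that your proposal never carries out: (a) the initial-slice strong convergence $u_k(\cdot,-1)\to\overline u(\cdot,-1)$ in $L^{p+1}_\loc$ (Lemma~\ref{lem:strt1li}), which relies on $p+1<q_c$ and a uniform bound in a strictly higher integrability class at $t=-1$, gives $\mu_{-1}=0$; and (b) the backward self-similarity of $\mu$ (Lemma~\ref{lem:BSSmu}), combined with $d\mu=d\mu_t\,dt$ (Lemma~\ref{lem:defect}), transports $\mu_{-1}=0$ to $\mu_t=0$ for a.e.~$t$, hence $\mu\equiv0$. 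The $\eps$-regularity theorems play \emph{no} role here; they enter only afterwards, in proving nondegeneracy (Lemma~\ref{lem:noshrink}), partial regularity, and far-field boundedness. Your step~(c), which attributes the strong convergence to $\eps$-regularity ``away from a small singular set'' and then waves at showing $\mu\equiv0$ ``outside a negligible set,'' is therefore a genuine gap: you do not supply the argument that actually kills $\mu$, and the framing suggests a residual singular set where in fact there is none.

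Two smaller points. First, of the two choices you float for $\lambda_k$, only the second is correct: one must take $\lambda_k=\sqrt{T-t_k}$ along the bounded-norm sequence (equivalently, $u_k$ as in~\eqref{eq:ukdef}); the $L^\infty$-normalized (type-I) scale is not what the paper uses and would break the link between $\eps$-regularity at scale $\delta=\sqrt{-T_k}$ and Lemma~\ref{lem:noshrink}. Second, your worry about the failure of ``absolute continuity'' for the $L^{q_c,\infty}$ norm is not where the difficulty lies. The modifications the paper alludes to are essentially (i) replacing H\"older in Lebesgue spaces by H\"older in Lorentz spaces in Lemma~\ref{lem:unibE} and its descendants (parts of that proof already go through $L^{q_c,\infty}$), and (ii) using $L^{q_c,\infty}(B_R)\hookrightarrow L^r(B_R)$ for any $r<q_c$ to make the interpolation in Lemma~\ref{lem:strt1li} work from the weak-$*$ limit of $u_k(\cdot,-1)$. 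No absolute continuity is required; only the local embedding and $p+1<q_c$. With these corrections your argument aligns with the paper's; as written, the key step $\mu\equiv0$ is unsupported.
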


A related result of this corollary was obtained by Blatt-Struwe \cite[Theorem 6.9]{BS15}. 
Even if \eqref{eq:limLo} is not assumed, they proved that 
there exists $\lambda_k\to 0$ such that the rescaled solutions 
converge to a nontrivial backward self-similar solution
smoothly away from an exceptional set 
of locally finite $(n+2-2(p+1)/(p-1))$-dimensional 
parabolic Hausdorff measure. 
\end{remark}

\begin{remark}[Blow-up rate]
Specifying the blow-up rate of the critical norm 
for general blow-up solutions 
seems to be a fascinating and challenging open problem. 
We note that, for $p\geq p_{JL}$, 
there exist type~II blow-up solutions in $C([0,T); L^{q_c}(\R^n))$ 
with the logarithmic blow-up rate of the $L^{q_c}(\R^n)$ norm, 
see \cite[Corollary 1.5]{Se18} for $p=p_{JL}$
and \cite[Corollary 3]{MS21} for $p>p_{JL}$. 
See also \cite[Subsection 8.2.1]{Ha20s} for $p=p_S$ with $n=6$. 
\end{remark}

\begin{remark}[General domain]
It is very likely that Theorem \ref{th:main} holds 
with $\R^n$ replaced by a $C^{2+\alpha}$ domain for some $0<\alpha<1$. 
At least, it was proved \cite{MTpre} that 
$\limsup_{t\to T}\|u(\cdot,t)\|_{L^{q_c}(\Omega)} =\infty$ holds 
for $p>p_S$ under the $0$-Dirichlet boundary condition 
on an arbitrary $C^{2+\alpha}$ domain $\Omega$. 
We do not pursue this (technical) subject in this paper. 
\end{remark}

\subsection*{Organization of the paper}
In Section \ref{sec:GKene}, we define rescaled solutions $u_k$. 
By analyzing the Giga--Kohn energy, 
we give uniform estimates of $u_k$. 
In Section \ref{sec:defect}, 
we construct a blow-up limit and defect measures 
based on the uniform estimates. 
Then we examine the measures and 
prove strong convergence properties of $u_k$. 
In Section \ref{sec:eps}, 
we give an $\eps$-regularity theorem for the original solution, 
and then we also show an analogous result for the 
blow-up limit by applying the strong convergence. 
In Section \ref{sec:pr}, we prove Theorem \ref{th:main} 
based on the $\eps$-regularity results.

\section{Giga--Kohn energy}\label{sec:GKene}
In this section, we define rescaled solutions 
and give auxiliary estimates based on the analysis 
of the Giga--Kohn energy introduced in \cite{GK85}. 
We note that 
Lemmas \ref{lem:unienk}, \ref{lem:up1fixt} and 
\ref{lem:ukuni1} are the basis for constructing defect measures 
in Section \ref{sec:defect}. 
Lemmas \ref{lem:Ep1}, \ref{lem:wp1ene} and \ref{lem:wp1Ek} 
play a crucial role for proving the improved $\eps$-regularity results
in Section \ref{sec:eps}.

In what follows, we always assume $p>p_S$. 
After a scaling and a time shift for \eqref{eq:main}, 
we consider the solution of 
\begin{equation}\label{eq:fujita}
\left\{ 
\begin{aligned}
	&u_t-\Delta u=|u|^{p-1}u &&\mbox{ in }\R^n\times(-1,0), \\
	&u(\cdot,-1)=u_0\in  L^{q_c}(\R^n) &&\mbox{ in }\R^n, 
\end{aligned}
\right.
\end{equation} 
see Remark \ref{rem:defsol} for existence and uniqueness. 
Note that 
\begin{equation}\label{eq:uspace}
	u\in C([-1,0); L^{q_c}(\R^n)) \cap 
	L^\infty_\loc((-1,0);L^\infty(\R^n)) \cap
	C^{2,1}(\R^n\times(-1,0)). 
\end{equation}
To prove Theorem \ref{th:main} by contradiction, we suppose that 
\[
	\liminf_{t\to 0}\|u(\cdot,t)\|_{L^{q_c}(\R^n)}< \infty. 
\]
More precisely, we suppose the following: 
\begin{equation}\label{eq:seq}
\left\{ 
\begin{aligned}
	&\mbox{There exist a constant $M>0$ 
	and a monotone increasing} \\
	&\mbox{sequence $\{T_k\}_{k=1}^\infty \subset (-1,0)$ 
	satisfying $T_k\to 0$ as $k\to \infty$} \\
	&\mbox{and $\|u(\cdot,T_k)\|_{L^{q_c}(\R^n)} \leq M$ 
	for all $k\geq1$.} 
\end{aligned}
\right. 
\end{equation}
Define a sequence of rescaled solutions $u_k$ by 
\begin{equation}\label{eq:ukdef}
	u_k(x,t):= (-T_k)^\frac{1}{p-1} u( \sqrt{-T_k} x, (-T_k) t ), 
	\quad 
	x\in\R^n, T_k^{-1} \leq t< 0. 
\end{equation}
We note that $u_k$ satisfies 
\begin{equation}\label{eq:ukfujita}
	\partial_t u_k -\Delta u_k = |u_k|^{p-1} u_k 
	\quad \mbox{ in }\R^n\times (T_k^{-1},0). 
\end{equation}
Moreover, by the scaling invariance of the $L^{q_c}(\R^n)$ norm 
and \eqref{eq:seq}, we have  
\begin{equation}\label{eq:Tkbdd}
	\|u_k(\cdot,-1)\|_{L^{q_c}(\R^n)} 
	=\|u(\cdot,T_k)\|_{L^{q_c}(\R^n)} \leq M  
	\quad 
	\mbox{ for }k\geq1. 
\end{equation}

For $\tilde x\in\R^n$ and $-1<t<\tilde t\leq0$, 
the Giga--Kohn energy $E$ is defined by 
\begin{equation}\label{eq:oriEdef}
\begin{aligned}
	&E(t) = E_{(\tilde x,\tilde t)}(t) \\
	&:= (\tilde t-t)^{\frac{2}{p-1}+1} 
	\int_{\R^n} \left( \frac{|\nabla u|^2}{2} - \frac{|u|^{p+1}}{p+1} 
	+\frac{|u|^2}{2(p-1)(\tilde t-t)} \right) K_{(\tilde x,\tilde t)} dx, 
\end{aligned}
\end{equation}
where $K_{(\tilde x,\tilde {t})}(x,t):= 
(\tilde t-t)^{-n/2} e^{-|x-\tilde x|^2/(4(\tilde t-t))}$ 
is the backward heat kernel, 
the integrands are evaluated at $(x,t)$ and 
we often suppress the center of the scaling $(\tilde x,\tilde t)$ 
when no confusion can arise. 
We define the backward similarity variables $(\eta,\tau)$ by 
\[
	\eta:=\frac{x-\tilde x}{(\tilde t- t)^{1/2}}, \quad \tau:= -\log(\tilde t -t). 
\]
We also define the backward rescaled solution $w$ 
and the corresponding Giga--Kohn energy $\cE$ by 
\begin{align}
	&w(\eta,\tau)=w_{(\tilde x, \tilde t)}(\eta,\tau)
	:= e^{-\frac{1}{p-1}\tau} 
	u(\tilde x+e^{-\frac{1}{2}\tau}\eta,  \tilde t - e^{-\tau}), \notag\\
	&\cE(\tau)=\cE_{(\tilde x,\tilde t)}(\tau) 
	:= \int_{\R^n} \left( \frac{|\nabla w|^2}{2} 
	- \frac{|w|^{p+1}}{p+1} + \frac{|w(\eta,\tau)|^2}{2(p-1)} \right) 
	\rho(\eta) d\eta, 
	\label{eq:cEdef}
\end{align}
where $\rho(\eta):=e^{-|\eta|^2/4}$. 
We note that 
\begin{equation}\label{eq:EcE}
	E_{(\tilde x, \tilde t)}(t)
	=\cE_{(\tilde x, \tilde t)}(\tau) \quad \mbox{ with }\tau=-\log(\tilde t-t). 
\end{equation}
Moreover, by setting $\tau_0:=-\log(\tilde t+1)$, 
we see that $w$ satisfies 
\begin{equation}\label{eq:weq}
	\rho w_\tau 
	= \nabla \cdot (\rho \nabla  w) - \frac{1}{p-1} w\rho +|w|^{p-1}w\rho
	\quad \mbox{ in } \R^n\times(\tau_0, \infty).  
\end{equation}

Giga and Kohn \cite{GK85,GK87} proved the following monotonicity formula: 

\begin{lemma}\label{lem:Emono}
For any $\tilde x\in \R^n$, $-1<\tilde t\leq0$ and $\tau>\tau_0=-\log(\tilde t+1)$, 
\begin{equation}\label{eq:GKmn}
	\frac{d \cE_{(\tilde x,\tilde t)}}{d\tau}(\tau)
	= - \int_{\R^n} 
	|\partial_\tau w_{(\tilde x,\tilde t)}(\eta,\tau)|^2 
	\rho(\eta) d\eta. 
\end{equation}
In particular, $\cE_{(\tilde x,\tilde t)}(\tau)$ 
is nonincreasing in $\tau$. 
\end{lemma}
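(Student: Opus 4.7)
The plan is to prove the identity by a direct differentiation of $\mathcal{E}(\tau)$ in $\tau$, using the PDE \eqref{eq:weq} satisfied by $w$, followed by a single integration by parts against the Gaussian weight $\rho$. First I would justify differentiation under the integral sign: on any compact subinterval of $(\tau_0,\infty)$, the regularity stated in \eqref{eq:uspace} gives that $w$, $\nabla w$ and $w_\tau$ are smooth in $\eta$ and locally bounded, while the weight $\rho(\eta)=e^{-|\eta|^2/4}$ supplies the decay needed to pass the $\tau$-derivative inside. This yields
\begin{equation*}
\frac{d\mathcal{E}}{d\tau}(\tau)
= \int_{\R^n}\!\Bigl(\nabla w\cdot \nabla w_\tau \;-\; |w|^{p-1}w\, w_\tau \;+\; \frac{1}{p-1}\, w\, w_\tau\Bigr)\rho\, d\eta.
\end{equation*}

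Next I would integrate the first term by parts. Writing $\nabla w\cdot\nabla w_\tau\,\rho = \nabla\cdot(\rho w_\tau\nabla w) - w_\tau\,\nabla\cdot(\rho\nabla w)$, the divergence term integrates to zero (the Gaussian rapidly beats the polynomial growth of the remaining factors, which is again controlled by \eqref{eq:uspace} together with parabolic smoothing on any $[\tau_1,\tau_2]\subset(\tau_0,\infty)$). This gives
\begin{equation*}
\frac{d\mathcal{E}}{d\tau}(\tau)
= \int_{\R^n} w_\tau\Bigl(-\nabla\cdot(\rho\nabla w) \;-\; |w|^{p-1}w\,\rho \;+\; \tfrac{1}{p-1}w\,\rho\Bigr) d\eta.
\end{equation*}
At this point I would substitute the rescaled equation \eqref{eq:weq}, which asserts precisely that the bracket equals $-\rho\, w_\tau$. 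The identity \eqref{eq:GKmn} follows immediately, and the monotonicity statement is then just the sign of the resulting integrand.

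The main technical obstacle is the justification of the two analytic steps above (differentiation under the integral and vanishing of the boundary term at spatial infinity), not the algebra. The solution $u$ lives in $L^{q_c}(\R^n)$ with possibly slow spatial decay, and the rescaling \eqref{eq:ukdef}--type change of variables in $w$ preserves this rough integrability; however, thanks to $u\in L^\infty_\loc((-1,0);L^\infty(\R^n))$ from \eqref{eq:uspace}, parabolic regularity propagates to derivatives of $w$ on any compact $\tau$-interval, and the Gaussian $\rho$ ensures that all integrals, including $\int |w_\tau|^2\rho\, d\eta$, are finite. A standard cutoff argument (multiplying by $\chi_R(\eta)$ supported in $|\eta|\le 2R$ and letting $R\to\infty$) then removes any lingering concern about the boundary term. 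Once \eqref{eq:GKmn} is established, monotonicity of $\mathcal{E}(\tau)$ is immediate from nonnegativity of the right-hand side's absolute value.
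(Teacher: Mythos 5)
Your proposal is correct and follows essentially the same route as the paper: differentiate $\cE$ in $\tau$, integrate the gradient term by parts against $\rho$, and substitute the rescaled equation \eqref{eq:weq} to recover $-\int |w_\tau|^2 \rho\, d\eta$. The paper's proof shows only the integration-by-parts step for the gradient term and delegates the remaining algebra and analytic justifications to \cite[Proposition 2.1]{GK87}; you have spelled out the same computation in full, and your remarks on justifying differentiation under the integral and the vanishing of boundary terms (via the Gaussian decay of $\rho$, local boundedness from \eqref{eq:uspace}, and a cutoff argument) are exactly the points that the cited reference handles.
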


\begin{proof}
Integrating by parts gives 
\[
	\frac{d}{d\tau} \int_{\R^n} |\nabla w|^2 \rho d\eta
	= - \frac{d}{d\tau} \int_{\R^n} w \nabla\cdot (\rho\nabla w) d\eta
	= - 2\int_{\R^n} w_\tau  \nabla\cdot (\rho\nabla w) d\eta. 
\]
This together with 
differentiating the other terms in \eqref{eq:cEdef} shows \eqref{eq:GKmn}. 
See \cite[Proposition 2.1]{GK87} for details. 
\end{proof}

\begin{remark}\label{rem:wtSS}
By a direct computation, $w_\tau$ satisfies 
\[
\begin{aligned}
	&\partial_\tau w_{(\tilde x,\tilde t)}(\eta,\tau) = 
	- (\tilde t-t)^\frac{1}{p-1} S_{(\tilde x,\tilde t)}[u](x,t),  \\
	&S_{(\tilde x,\tilde t)}[u](x,t) := 
	\frac{u(x,t)}{p-1} + \frac{x-\tilde x}{2} \cdot \nabla u(x,t) 
	- (\tilde t-t ) u_t(x,t). 
\end{aligned}
\]
As stated in \cite[Page 3]{GK87}, 
$w_\tau$ measures the extent to which $u$ is not self-similar. 
This observation will play an important role 
to show the backward self-similarity of a blow-up limit, 
see Lemma \ref{lem:BSS}. 
\end{remark}

The monotonicity of $\cE$ guarantees the following properties: 

\begin{lemma}\label{lem:cEprop}
There exists $C>0$ depending only on $n$ and $p$ such that 
\begin{align}
	&\begin{aligned}
	&\frac{1}{2} \frac{d}{d\tau} \int_{\R^n} |w_{(\tilde x,\tilde t)}(\eta,\tau)|^2 
	\rho(\eta) d\eta \\
	&= -2\cE_{(\tilde x,\tilde t)}(\tau) 
	+ \frac{p-1}{p+1} \int_{\R^n} |w_{(\tilde x,\tilde t)}(\eta,\tau)|^{p+1} 
	\rho(\eta) d\eta, 
	\end{aligned}
	\label{eq:w22}  \\
	&\cE_{(\tilde x,\tilde t)}(\tau)\geq 0, \quad 
	\int_{\R^n} |w_{(\tilde x,\tilde t)}(\eta,\tau)|^2 \rho(\eta) d\eta 
	\leq C\cE_{(\tilde x,\tilde t)}^\frac{2}{p+1}(\tau'), 
	\label{eq:wno2}
\end{align}
for any $\tilde x\in \R^n$, $-1 < \tilde t\leq 0$ 
and $\tau>\tau'>\tau_0$. 
\end{lemma}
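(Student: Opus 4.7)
The plan is to prove \eqref{eq:w22} by a direct computation and then to deduce \eqref{eq:wno2} (both the nonnegativity $\cE\geq 0$ and the upper bound on $\int|w|^2\rho\,d\eta$) by playing \eqref{eq:w22} and the monotonicity of $\cE$ off against the interior regularity \eqref{eq:uspace} via an ODE blow-up argument. For \eqref{eq:w22} I would multiply the equation \eqref{eq:weq} by $w$, integrate over $\R^n$, and integrate by parts on $\int w\,\nabla\cdot(\rho\nabla w)\,d\eta = -\int|\nabla w|^2\rho\,d\eta$; comparing the resulting identity against the definition \eqref{eq:cEdef} of $\cE$ and grouping the $|w|^{p+1}$ terms gives \eqref{eq:w22}.

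Set $f(\tau):=\int_{\R^n}|w|^2\rho\,d\eta$ and $g(\tau):=\int_{\R^n}|w|^{p+1}\rho\,d\eta$. Since $\int\rho\,d\eta<\infty$ and $p+1>2$, H\"older's inequality with conjugate exponents $(p+1)/2$ and $(p+1)/(p-1)$ yields the weighted bound $g(\tau)\geq c_0\,f(\tau)^{(p+1)/2}$ for some $c_0=c_0(n,p)>0$. Fix $\tau'>\tau_0$; by Lemma \ref{lem:Emono}, $\cE(\tau)\leq \cE(\tau')$ for all $\tau>\tau'$, so inserting the H\"older bound into \eqref{eq:w22} produces the scalar differential inequality
\[
    f'(\tau)\geq -4\cE(\tau')+c\,f(\tau)^{(p+1)/2} \qquad (\tau>\tau'),
\]
with $c=c(n,p)>0$.

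From here both parts of \eqref{eq:wno2} should follow by an ODE blow-up comparison. For the upper bound, pick $A:=C_0\max(\cE(\tau'),0)^{2/(p+1)}$ with $C_0=C_0(n,p)$ large enough that the nonlinear term dominates whenever $f\geq A$; then $f'\geq\tfrac{c}{2}f^{(p+1)/2}$ in that regime, and since $(p+1)/2>1$ this autonomous ODE diverges at a finite $\tau_*$. However, \eqref{eq:uspace} guarantees $u(\cdot,\tilde t-e^{-\tau})\in L^\infty(\R^n)$ for every $\tau>\tau_0$ (the time $\tilde t-e^{-\tau}<\tilde t\leq 0$ is strictly interior to $(-1,0)$), hence $f(\tau)\leq \|u(\cdot,\tilde t-e^{-\tau})\|_{L^\infty}^2\int\rho\,d\eta<\infty$ throughout $(\tau_0,\infty)$; this is a contradiction. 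Hence $f(\tau)\leq A$ for $\tau>\tau'$, which is the upper bound. For the nonnegativity, note that $f(\tau^*)=0$ would force $w(\cdot,\tau^*)\equiv 0$ and hence $\cE(\tau^*)=0$; so if $\cE(\tau^*)<0$ then $f(\tau^*)>0$, the strictly positive constant $-4\cE(\tau^*)$ keeps $f$ increasing, and the same nonlinear feedback $f'\geq c\,f^{(p+1)/2}$ drives $f$ to finite-$\tau$ blow-up, again contradicting the finiteness just established.

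The main obstacle I anticipate is precisely the finiteness of $f$ throughout $(\tau_0,\infty)$, which is the half of the dichotomy that supplies the contradictory force. This is where \eqref{eq:uspace} is essential: although $\|u(\cdot,s)\|_{L^\infty}$ may grow wildly as $s\to 0^-$, it is finite for each fixed $s\in(-1,0)$, and that pointwise-in-$\tau$ information is exactly what the ODE blow-up comparison requires.
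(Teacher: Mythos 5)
Your proof is correct and follows essentially the same approach as the paper: derive the identity \eqref{eq:w22} by testing \eqref{eq:weq} against $w$ and integrating by parts, then combine H\"older's inequality, the monotonicity of $\cE$ from Lemma~\ref{lem:Emono}, and the infinite length of the $\tau$-interval to close an ODE blow-up comparison for $f(\tau)=\int|w|^2\rho\,d\eta$. The paper defers the ODE-comparison details to \cite{GK87,QSbook2}, while you spell them out, including the observation (supplied by the interior regularity in \eqref{eq:uspace}) that $f(\tau)$ is finite for every $\tau>\tau_0$, which is exactly what makes the finite-time-blow-up dichotomy yield the bound.
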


\begin{proof}
From \eqref{eq:weq} and the integration by parts, it follows that 
\[
	\frac{1}{2} \frac{d}{d\tau} \int_{\R^n} |w|^2 \rho d\eta
	= \int_{\R^n} \left( 
	- |\nabla  w|^2 - \frac{|w|^2}{p-1} +|w|^{p+1} 
	\right) \rho d\eta. 
\]
Then \eqref{eq:cEdef} shows \eqref{eq:w22}. 
Lemma \ref{lem:Emono} and the H\"older inequality yield 
\[
	\frac{1}{2} \frac{d}{d\tau} \int_{\R^n} |w(\eta,\tau)|^2 \rho(\eta) d\eta
	\geq  -2\cE(\tau') 
	+ C \left( \int_{\R^n} |w(\eta,\tau)|^2 \rho(\eta) d\eta \right)^\frac{p+1}{2} 
\]
for all $\tau_0<\tau'<\tau<\infty$. 
Since the length of the time-interval for $\tau$ is infinite, 
this ordinary differential inequality guarantees \eqref{eq:wno2}. 
For details, 
see \cite[Propositions 2.1, 2.2]{GK87} 
and \cite[Proposition 23.8]{QSbook2}. 
\end{proof}

By $u\in C([-1,0); L^{q_c}(\R^n))$ as in \eqref{eq:uspace}, 
we can take a constant $\delta_1>0$ depending on $T_1$ and 
satisfying $(1+\delta_1)T_1>-1$ 
such that 
\begin{equation}\label{eq:cM}
	\|u(\cdot,t)\|_{L^{q_c}(\R^n)} \leq 2M 
	\quad \mbox{ for }(1+\delta_1)T_1 \leq t\leq T_1. 
\end{equation}
The monotonicity together with \eqref{eq:seq} 
shows a uniform estimate of $\cE$.

\begin{lemma}\label{lem:unibE}
There is $C>0$ depending only on 
$n$, $p$, $M$ and $\delta_1$ such that 
\[
	\cE_{(\tilde x,\tilde t)}(\tau) \leq C
\]
for all $\tilde x\in\R^n$ and $T_1< \tilde t\leq 0$ 
and $\tau\geq  \tau_1:=-\log(\tilde t-T_1)$. 
\end{lemma}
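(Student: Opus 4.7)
The plan is to use the monotonicity of $\cE_{(\tilde x,\tilde t)}$ from Lemma \ref{lem:Emono} together with the identification \eqref{eq:EcE} to reduce the statement to bounding $E_{(\tilde x,\tilde t)}$ at a single auxiliary time. More precisely, the aim is to exhibit, for each $\tilde x\in\R^n$ and $\tilde t\in(T_1,0]$, some $t_*\in[(1+\delta_1)T_1,T_1]$ (possibly depending on $(\tilde x,\tilde t)$) with $E_{(\tilde x,\tilde t)}(t_*)\leq C$ uniformly; then, setting $\tau_*:=-\log(\tilde t-t_*)$, the constraints $(1+\delta_1)T_1>-1$ and $t_*\leq T_1$ give $\tau_0<\tau_*\leq\tau_1$, and monotonicity yields $\cE_{(\tilde x,\tilde t)}(\tau)\leq\cE_{(\tilde x,\tilde t)}(\tau_*)=E_{(\tilde x,\tilde t)}(t_*)\leq C$ for every $\tau\geq\tau_1$.

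For the two ``lower-order'' terms of $E(t_*)$ in \eqref{eq:oriEdef}, I would apply H\"older's inequality with the critical bound $\|u(\cdot,t_*)\|_{L^{q_c}}\leq 2M$ from \eqref{eq:cM}. The supercritical assumption $p>p_S$ with $n\geq 3$ forces $q_c>p+1>2$, so the conjugate pairings $L^{q_c/(p+1)}$--$L^{q_c/(q_c-p-1)}$ and $L^{q_c/2}$--$L^{q_c/(q_c-2)}$ are admissible. A direct computation of the Gaussian $L^\alpha$-norms of $K_{(\tilde x,\tilde t)}(\cdot,t_*)$ produces factors $(\tilde t-t_*)^{-(p+1)/(p-1)}$ and $(\tilde t-t_*)^{-2/(p-1)}$, which exactly cancel the prefactors $(\tilde t-t_*)^{(p+1)/(p-1)}$ and $(\tilde t-t_*)^{2/(p-1)}$; these contributions are thus bounded by $CM^{p+1}$ and $CM^2$ uniformly in $\tilde x$, $\tilde t$ and $t_*$.

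The gradient term $\tfrac12(\tilde t-t_*)^{(p+1)/(p-1)}\int|\nabla u|^2 K\,dx$ is the main technical obstacle, as no a priori control on $\nabla u$ in a scaling-invariant space is available. The idea is to exploit $\partial_tK+\Delta K=0$: multiplying \eqref{eq:fujita} by $uK$ and integrating by parts on $\R^n$ (justified by classical regularity and Gaussian decay) yields the pointwise-in-time identity
\begin{equation*}
\int_{\R^n}|\nabla u|^2 K\,dx=-\tfrac{1}{2}\frac{d}{dt}\int_{\R^n}|u|^2K\,dx+\int_{\R^n}|u|^{p+1}K\,dx.
\end{equation*}
Integrating in $t$ over the subinterval $[a,b]:=[(1+\delta_1)T_1,(1+\tfrac{\delta_1}{2})T_1]$ and bounding the boundary terms and nonlinear integral on the right by the H\"older arguments of the previous paragraph gives $\int_a^b\int_{\R^n}|\nabla u|^2K\,dx\,dt\leq C$; the crucial uniformity in $\tilde t$ comes from the uniform separation $\tilde t-t\geq -\delta_1T_1/2>0$ valid throughout $[a,b]$ (regardless of $\tilde t$), which prevents the H\"older factors from blowing up. A mean-value selection then produces $t_*\in[a,b]$ with $\int|\nabla u(\cdot,t_*)|^2K(\cdot,t_*)\,dx\leq C/(b-a)$, and combined with $(\tilde t-t_*)^{(p+1)/(p-1)}\leq(-(1+\delta_1)T_1)^{(p+1)/(p-1)}<1$ (from $(1+\delta_1)T_1>-1$), this closes the gradient estimate and, via the monotonicity reduction, the lemma.

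The hardest point in this strategy is the uniformity as $\tilde t\to T_1^+$, where $\tau_1\to\infty$ and pointwise bounds at $T_1$ fail because H\"older produces singular factors $(\tilde t-T_1)^{-(p+1)/(p-1)}$. Stepping back from $T_1$ into the subinterval $[a,b]$, which is uniformly separated from $\tilde t$ by a distance $-\delta_1T_1/2$, and recovering the bound at $\tau_1$ via monotonicity is the key device that resolves this difficulty.
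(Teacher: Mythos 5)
Your proposal is correct, and it takes a genuinely different route from the paper. The paper bounds $\cE(\tau_1)=E(T_1)$ by representing $\nabla u(\cdot,T_1)$ via the Duhamel formula on $[(1+\delta_1)T_1,T_1]$, splitting into a linear piece $U_1$ (controlled in $L^\infty$ through the $L^{q_c}$-to-$L^\infty$ heat smoothing) and a nonlinear piece $U_2$ (controlled in a weak Lorentz space using the Kozono--Yamazaki convolution estimate), then pairing with the Gaussian by H\"older in Lorentz spaces. Your approach instead stays inside the energy framework: you derive the localized identity $\int|\nabla u|^2K=-\tfrac12\tfrac{d}{dt}\int|u|^2K+\int|u|^{p+1}K$ directly from $\partial_tK+\Delta K=0$, integrate over $[a,b]=[(1+\delta_1)T_1,(1+\tfrac{\delta_1}{2})T_1]$, bound the boundary and source terms by the same H\"older-against-Gaussian estimates used for the zeroth-order terms, and choose a good time slice $t_*$ by the mean-value theorem. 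This is more elementary (no Lorentz machinery, no Duhamel) and more self-contained within the Giga--Kohn calculus already in use; the paper's route, on the other hand, reuses precisely the local-solvability estimates developed in \cite{MTpre} and gives a bound at the single endpoint $T_1$ rather than at a floating $t_*$.

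One small imprecision in your closing step: it is not really the inequality $(\tilde t-t_*)^{(p+1)/(p-1)}\le(-(1+\delta_1)T_1)^{(p+1)/(p-1)}<1$ that ``closes'' the gradient estimate, because the mean-value bound $\int|\nabla u(\cdot,t_*)|^2K\,dx\le C/(b-a)$ still carries a $T_1$-dependent factor of the form $(-\delta_1T_1/2)^{-(p+1)/(p-1)}$ coming from the Gaussian $L^r$-norms on $[a,b]$ and from $b-a=-\delta_1T_1/2$. What actually makes the constant depend only on $n,p,M,\delta_1$ is the cancellation of powers: the product of the prefactor $\le(-(1+\delta_1)T_1)^{(p+1)/(p-1)}$ and that $(-\delta_1T_1/2)^{-(p+1)/(p-1)}$ equals $\bigl(2(1+\delta_1)/\delta_1\bigr)^{(p+1)/(p-1)}$, in which $T_1$ drops out --- the same cancellation you correctly identified for the two zeroth-order terms. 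With that observation made explicit, your argument is complete.
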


\begin{proof}
From the monotonicity of $\cE$ and the switch back to the original variables, 
it follows that 
\[
\begin{aligned}
	\cE(\tau) &\leq \cE(\tau_1) \leq 
	C \int_{\R^n} ( |\nabla w(\eta,\tau_1)|^2  
	+ |w(\eta,\tau_1)|^2 ) \rho(\eta) d\eta \\
	&\leq 
	C (\tilde t-T_1)^{\frac{2}{p-1}+1} 
	\int_{\R^n} \left( 
	|\nabla u(x,T_1)|^2 + \frac{|u(x,T_1)|^2}{\tilde t-T_1} \right)
	K_{(\tilde x,\tilde t)}(x,T_1) dx. 
\end{aligned}
\]
Remark that $q_c>2$ by $p>p_S$. 
Then the H\"older inequality and \eqref{eq:seq} yield 
\[
\begin{aligned}
	\int_{\R^n} |u(x,T_1)|^2 K_{(\tilde x,\tilde t)}(x,T_1) dx
	&\leq 
	M^2 (\tilde t-T_1)^{-\frac{n}{2}} 
	\left( \int_{\R^n} e^{-\frac{|x-\tilde x|^2}{C(\tilde t-T_1)}} dx 
	\right)^{1-\frac{2}{q_c}} \\
	&\leq CM^2 (\tilde t-T_1)^{-\frac{2}{p-1}}. 
\end{aligned}
\]

Since $u$ is a solution of \eqref{eq:fujita}, 
$u$ satisfies the following integral equation: 
\[
\begin{aligned}
	u(x,T_1) &= 
	\int_{\R^n} G(x-y,- \delta_1 T_1) u(y,(1+\delta_1)T_1) dy \\
	&\quad 
	+ \int_{(1+\delta_1)T_1}^{T_1} \int_{\R^n} 
	G(x-y,T_1-s) |u(y,s)|^{p-1} u(y,s) dyds, 
\end{aligned}
\]
where $G(x,t):= (4\pi t)^{-n/2} e^{-|x|^2/(4t)}$ is the heat kernel. 
Setting $K_1(x,t):=t^{-(n+1)/2}e^{-|x|^2/(8t)}$ 
gives  $|\nabla G| \leq C|x|t^{-1}G\leq C K_1$, and so 
\begin{equation}\label{eq:nabuT1}
\begin{aligned}
	|\nabla u(x,T_1)| &\leq 
	C \int_{\R^n} K_1(x-y,- \delta_1 T_1) |u(y,(1+\delta_1)T_1)| dy  \\
	&\quad 
	+ C \int_{(1+\delta_1)T_1}^{T_1} \int_{\R^n} 
	K_1(x-y,T_1-s) |u(y,s)|^p dyds \\
	&=: C U_1(x) + C U_2(x). 
\end{aligned}
\end{equation}

By direct computations, we see that 
\[
\begin{aligned}
	&\int_{\R^n} |\nabla u(x,T_1)|^2 K_{(\tilde x,\tilde t)}(x,T_1) dx 
	\leq 
	C\int_{\R^n} (|U_1|^2+|U_2|^2) K_{(\tilde x,\tilde t)}(x,T_1) dx 
	\\
	&\leq 
	C\| U_1\|_{L^\infty(\R^n)}^2
	 \int_{\R^n} K_{(\tilde x,\tilde t)} dx
	+ 
	C (\tilde t-T_1)^{-\frac{n}{2}} 
	\int_{\R^n}\left(  |U_2(x)| 
	e^{-\frac{|x-\tilde x|^2}{8(\tilde t-T_1)}} \right)^2 dx 
	\\
	&= C(4\pi)^\frac{n}{2} \| U_1\|_{L^\infty(\R^n)}^2  
	+ C (\tilde t-T_1)^{-\frac{n}{2}} 
	\left\| U_2 e^{-\frac{|\cdot-\tilde x|^2}{8(\tilde t-T_1)}}
	\right\|_{L^{2,2}(\R^n)}^2. 
\end{aligned}
\]
From the convolution inequality and 
the H\"older inequality in the Lorentz spaces (see \cite[Proposition 2.1]{KY99}) 
with the aid of $n(p-1)/(p+1)>2$ by $p>p_S$, it follows that 
\begin{equation}\label{eq:nabuT1T1}
\begin{aligned}
	&\int_{\R^n} |\nabla u(x,T_1)|^2 K_{(\tilde x,\tilde t)}(x,T_1) dx \\
	&\leq 
	C ( (-\delta_1 T_1)^{-\frac{n}{2q_c}-\frac{1}{2}} 
	\|u(\cdot,(1+\delta_1)T_1)\|_{L^{q_c}(\R^n)} )^2  \\
	&\quad 
	+ C (\tilde t-T_1)^{-\frac{n}{2}} \| U_2 \|_{L^{\frac{n(p-1)}{p+1},\infty}(\R^n)}^2  
	\left\| e^{-\frac{|\cdot-\tilde x|^2}{8(\tilde t-T_1)}} 
	\right\|_{L^{\frac{2n(p-1)}{n(p-1)-2(p+1)},2} (\R^n)}^2. 
\end{aligned}
\end{equation}
We note that $-\delta_1 T_1\geq \delta_1 (\tilde t-T_1)>0$. 
By the same argument as in \cite[Proposition 2.1]{MTpre} 
(see also \cite[Theorem 18.1]{Me97}),  
we obtain 
\[
	\| U_2\|_{L^{\frac{n(p-1)}{p+1},\infty}(\R^n)} \leq 
	C \sup_{(1+\delta_1)T_1<t<T_1}\| u(\cdot,t)\|_{L^{q_c,\infty}(\R^n)}^p 
	\leq CM^{2p}. 
\]
This together with \eqref{eq:cM} 
and the straightforward computations shows that 
\[
	\int_{\R^n} |\nabla u(x,T_1)|^2 K_{(\tilde x,\tilde t)} (x,T_1) dx 
	\leq 
	C (\tilde t-T_1)^{-\frac{2}{p-1}-1} 
	( \delta_1^{-\frac{2}{p-1}-1} M^2 + M^{2p} ). 
\]
Then the lemma follows. 
\end{proof}

As an application of Lemmas \ref{lem:cEprop} and \ref{lem:unibE}, 
we prove that time integrals of $\cE$ can be controlled by 
space-time integrals of $|w|^{p+1}$.

\begin{lemma}\label{lem:Ep1}
There is $C>0$ depending only on 
$n$, $p$, $M$ and $\delta_1$ such that 
\[
\begin{aligned}
	\int_{\tau'}^\tau \cE_{(\tilde x,\tilde t)}(\sigma) d\sigma 
	& \leq 
	C \int_{\tau'}^\tau \int_{\R^n} 
	|w_{(\tilde x,\tilde t)}|^{p+1} \rho d\eta d\sigma \\
	&\quad 
	+ C (\tau-\tau')^\frac{p-1}{2(p+1)}
	\left(\int_{\tau'}^\tau \int_{\R^n} 
	|w_{(\tilde x,\tilde t)}|^{p+1} \rho d\eta d\sigma 
	\right)^\frac{1}{p+1} 
\end{aligned}
\]
for all $\tilde x\in \R^n$, $T_1<\tilde t\leq0$ 
and $\tau>\tau'>\tau_1=-\log(\tilde t-T_1)$. 
\end{lemma}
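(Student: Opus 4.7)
The plan is to integrate the $L^2$ identity~\eqref{eq:w22} of Lemma~\ref{lem:cEprop} over $[\tau',\tau]$ and reduce the problem to controlling the surviving ``boundary term'' $f(\tau'):=\int_{\R^n}|w_{(\tilde x,\tilde t)}(\cdot,\tau')|^2\rho\,d\eta$. Writing $g(\sigma):=\int_{\R^n}|w_{(\tilde x,\tilde t)}(\cdot,\sigma)|^{p+1}\rho\,d\eta$, the identity~\eqref{eq:w22} yields
\[
4\int_{\tau'}^\tau\cE_{(\tilde x,\tilde t)}(\sigma)\,d\sigma
= f(\tau')-f(\tau)+\frac{2(p-1)}{p+1}\int_{\tau'}^\tau g(\sigma)\,d\sigma,
\]
and dropping $-f(\tau)\leq 0$ shows that it suffices to estimate $f(\tau')$ by a constant times the right-hand side of the claim.

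For the $f(\tau')$ bound I would combine two ingredients. First, H\"older's inequality in space (using $\int\rho\,d\eta=(4\pi)^{n/2}$) gives the pointwise estimate $f(\sigma)\leq C g(\sigma)^{2/(p+1)}$, and H\"older in time then produces
\[
\int_{\tau'}^\tau f(\sigma)\,d\sigma\leq C(\tau-\tau')^{(p-1)/(p+1)}\Bigl(\int_{\tau'}^\tau g\,d\sigma\Bigr)^{2/(p+1)};
\]
note that the square root of this is precisely the target second term $C(\tau-\tau')^{(p-1)/(2(p+1))}(\int g)^{1/(p+1)}$. Second, by the mean value theorem one selects $\sigma^*\in[\tau',\tau]$ with $f(\sigma^*)\leq (\tau-\tau')^{-1}\int_{\tau'}^\tau f$, and runs the identity~\eqref{eq:w22} backward from $\sigma^*$ to $\tau'$,
\[
f(\tau')\leq f(\sigma^*)+4\int_{\tau'}^{\sigma^*}\cE(\sigma)\,d\sigma,
\]
(discarding the non-negative $-\tfrac{2(p-1)}{p+1}\int_{\tau'}^{\sigma^*}g$); the last integral is controlled by $C(\tau-\tau')$ via the uniform bound $\cE\leq C$ of Lemma~\ref{lem:unibE}.

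The resulting hybrid estimate $f(\tau')\leq C(\tau-\tau')^{-2/(p+1)}(\int g)^{2/(p+1)}+C(\tau-\tau')$ does not immediately match the claim, since the first term carries a negative power of $\tau-\tau'$. To bridge this gap I would also invoke the a priori bound $f(\tau')\leq C_0$ coming from~\eqref{eq:wno2} applied with $\tau_1<\tau'$, together with Lemma~\ref{lem:unibE}, and argue by a short dichotomy on the size of $B:=(\tau-\tau')^{(p-1)/(2(p+1))}(\int g)^{1/(p+1)}$: when $B\geq c_0$ the bound $f(\tau')\leq C_0\lesssim B$ already closes the argument, while for $B\leq c_0$ one exploits the factorization
\[
(\tau-\tau')^{-2/(p+1)}\bigl(\textstyle\int g\bigr)^{2/(p+1)}=(\tau-\tau')^{-1}B^{2},
\]
and uses Young's inequality together with the complementary estimate $\int\cE\leq C(\tau-\tau')$ from Lemma~\ref{lem:unibE} to trade the singular factor for a combination of $C\int g$ and $CB$.

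The main obstacle is this final interpolation: one has natural sharp estimates on $f(\tau')$ of two different flavors (one from H\"older--MVT that has the right shape but the wrong sign in $(\tau-\tau')$, and one absolute from Lemma~\ref{lem:unibE} and~\eqref{eq:wno2}), and producing the precise form stated requires a careful book-keeping of the constants depending on $n,p,M,\delta_1$ across the two regimes. All other steps (Lemma~\ref{lem:cEprop}, Lemma~\ref{lem:unibE}, and plain H\"older) are routine.
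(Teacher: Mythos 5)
Your reduction does not close, and the obstacle you flagged is a genuine defect rather than a bookkeeping issue. After integrating \eqref{eq:w22} and dropping $-f(\tau)$, you are committed to proving $f(\tau')\lesssim \int_{\tau'}^\tau g\,d\sigma + (\tau-\tau')^{(p-1)/(2(p+1))}\bigl(\int_{\tau'}^\tau g\,d\sigma\bigr)^{1/(p+1)}$. That intermediate estimate is false in general: for a fixed solution with $w(\cdot,\tau')\not\equiv 0$, send $\tau\to\tau'^+$; the right-hand side tends to $0$ while $f(\tau')$ stays put. So no interpolation, dichotomy, or Young-type trick can salvage the bound — you would need a factor of $\tau-\tau'$ multiplying $f(\tau')$, but integrating the exact identity \eqref{eq:w22} only delivers $f(\tau')$ with a constant coefficient. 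Concretely, your mean-value/H\"older hybrid gives $f(\tau')\lesssim (\tau-\tau')^{-2/(p+1)}\bigl(\int g\bigr)^{2/(p+1)}+(\tau-\tau')$; neither term is controlled by $\int g + B$ (take $\int g$ fixed and $\tau-\tau'\to 0$ for the first, or $\int g\to 0$ with $\tau-\tau'$ fixed for the second), and using $\int\cE\leq C(\tau-\tau')$ on a sub-interval just reproduces the un-absorbable $(\tau-\tau')$ term.

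The ingredient you never use is the monotonicity identity \eqref{eq:GKmn}, and it is the crux. Instead of integrating \eqref{eq:w22} to a conservation law, solve it pointwise for $\cE(\sigma)$ and bound $\bigl|\tfrac{d}{d\sigma}\int|w|^2\rho\bigr|$ by Cauchy--Schwarz, $2\bigl(\int|w|^2\rho\bigr)^{1/2}\bigl(\int|w_\sigma|^2\rho\bigr)^{1/2}=2\bigl(\int|w|^2\rho\bigr)^{1/2}\bigl(-\tfrac{d\cE}{d\sigma}\bigr)^{1/2}$, using \eqref{eq:GKmn}. Integrating in $\sigma$ and applying Cauchy--Schwarz in time gives $\int_{\tau'}^\tau\cE\leq C\bigl(\cE(\tau')-\cE(\tau)\bigr)^{1/2}\bigl(\iint|w|^2\rho\bigr)^{1/2}+C\int_{\tau'}^\tau g$. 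Now $\cE(\tau')-\cE(\tau)\leq\cE(\tau')\leq C$ by Lemma~\ref{lem:unibE} and $\cE\geq 0$ — an absolute bound requiring no factor of $\tau-\tau'$ — while $\iint|w|^2\rho$ is H\"oldered against $\iint\rho$ to produce exactly the $(\tau-\tau')^{(p-1)/(2(p+1))}\bigl(\int g\bigr)^{1/(p+1)}$ term. This sidesteps $f(\tau')$ entirely: the dissipation $\iint|w_\sigma|^2\rho$ is uniformly bounded, whereas the boundary value $f(\tau')$ is not controllable by the target right-hand side.
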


\begin{proof}
From \eqref{eq:w22}, the H\"older inequality and 
\eqref{eq:GKmn}, it follows that 
\begin{equation}\label{eq:etp1}
\begin{aligned}
	\cE(\tau) 
	&\leq 
	C\left(\int_{\R^n} |w_\tau|^2\rho d\eta \right)^\frac{1}{2} 
	\left(\int_{\R^n} |w|^2\rho d\eta \right)^\frac{1}{2}
	+ C \int_{\R^n} |w|^{p+1} \rho d\eta \\
	&= C\left( -\frac{d\cE}{d\tau} \right)^\frac{1}{2} 
	\left(\int_{\R^n} |w|^2\rho d\eta \right)^\frac{1}{2}
	+ C \int_{\R^n} |w|^{p+1} \rho d\eta. 
\end{aligned}
\end{equation}
By the H\"older inequality, Lemma \ref{lem:unibE} and $\cE\geq0$, 
we see that 
\[
\begin{aligned}
	\int_{\tau'}^\tau \cE(\sigma) d\sigma 
	&\leq 
	C( \cE(\tau')-\cE(\tau) )^\frac{1}{2}  
	\left( \iint |w|^2\rho  \right)^\frac{1}{2} 
	+ C \iint |w|^{p+1} \rho  \\
	&\leq 
	C \left( \iint |w|^{p+1} \rho 
	\right)^\frac{1}{p+1} 
	\left( \iint \rho \right)^\frac{p-1}{2(p+1)} + C \iint |w|^{p+1} \rho, 
\end{aligned}
\]
where we write 
$\iint(\cdots)=\int_{\tau'}^\tau \int_{\R^n} (\cdots) d\eta d\sigma$, 
and the lemma follows. 
\end{proof}

The same argument as in Lemma \ref{lem:Ep1} with the monotonicity of $\cE$ 
also shows a kind of reverse relation, that is, 
space-time integrals of $|w|^{p+1}$ can be controlled by $\cE$.

\begin{lemma}\label{lem:wp1ene}
There is $C>0$ depending only on 
$n$, $p$, $M$ and $\delta_1$ such that 
\[
	\int_{\tau'}^\tau \int_{\R^n} 
	|w_{(\tilde x,\tilde t)}(\eta,\sigma)|^{p+1} \rho(\eta) d\eta d\sigma 
	\leq C f( \cE_{(\tilde x,\tilde t)}(\tau') )
	f(\tau-\tau') 
\]
for all $\tilde x\in \R^n$, $T_1<\tilde t\leq0$ 
and $\tau>\tau'>\tau_1$, where $f(s):=s+s^{1/2}$ for $s\geq0$. 
\end{lemma}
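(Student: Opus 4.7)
The plan is to derive from \eqref{eq:w22} an identity expressing $\int|w|^{p+1}\rho\,d\eta$ as a linear combination of $\cE(\sigma)$ and a time derivative of $\int|w|^2\rho\,d\eta$, integrate it over $\sigma\in[\tau',\tau]$, and control both resulting pieces using the monotonicity of $\cE$ together with Cauchy--Schwarz. Rearranging \eqref{eq:w22} and integrating gives
\[
\frac{p-1}{p+1}\int_{\tau'}^\tau\!\!\int_{\R^n}|w|^{p+1}\rho\,d\eta\,d\sigma = 2\int_{\tau'}^\tau \cE(\sigma)\,d\sigma + \frac{1}{2}\left[\int_{\R^n}|w(\eta,\tau)|^2\rho\,d\eta - \int_{\R^n}|w(\eta,\tau')|^2\rho\,d\eta\right].
\]

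For the first term on the right, the monotonicity of $\cE$ (Lemma~\ref{lem:Emono}) gives $\int_{\tau'}^\tau \cE(\sigma)\,d\sigma \leq (\tau-\tau')\cE(\tau')$. For the boundary difference, I would write it via the fundamental theorem of calculus as $2\int_{\tau'}^\tau\!\!\int w\,w_\sigma\,\rho\,d\eta\,d\sigma$ and apply Cauchy--Schwarz twice, once in $\eta$ and once in $\sigma$, producing the bound $2(\int_{\tau'}^\tau\!\!\int|w|^2\rho)^{1/2}(\int_{\tau'}^\tau\!\!\int|w_\sigma|^2\rho)^{1/2}$. By \eqref{eq:GKmn} the second factor is at most $\cE(\tau')^{1/2}$, and by \eqref{eq:wno2} applied pointwise in $\sigma>\tau'$ the first factor is bounded by $C(\tau-\tau')^{1/2}\cE(\tau')^{1/(p+1)}$.

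Combining these estimates yields
\[
\int_{\tau'}^\tau\!\!\int_{\R^n}|w|^{p+1}\rho \leq C(\tau-\tau')\cE(\tau') + C(\tau-\tau')^{1/2}\cE(\tau')^{1/(p+1)+1/2}.
\]
Using the uniform bound $\cE(\tau')\leq C$ from Lemma~\ref{lem:unibE}, the factor $\cE(\tau')^{1/(p+1)}$ is absorbed into the constant, leaving $C(\tau-\tau')\cE(\tau') + C(\tau-\tau')^{1/2}\cE(\tau')^{1/2}$. Since $ab \leq f(a)f(b)$ and $a^{1/2}b^{1/2} \leq f(a)f(b)$, this gives the claimed estimate.

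The main subtlety is securing joint smallness in $\tau-\tau'$ and $\cE(\tau')$ simultaneously. A more direct approach that simply drops the nonpositive contribution $-\tfrac{1}{2}\int|w(\tau')|^2\rho\,d\eta$ and estimates the remaining $\int|w(\tau)|^2\rho\,d\eta$ by \eqref{eq:wno2} produces a term $C\cE(\tau')^{2/(p+1)}$ carrying no factor of $\tau-\tau'$, so it already fails at the diagonal $\tau=\tau'$. Routing the boundary difference through the fundamental theorem of calculus and Cauchy--Schwarz in time is precisely what forces the correct $(\tau-\tau')^{1/2}$ factor, at the price of raising the exponent of $\cE(\tau')$ to $1/(p+1)+1/2$, which is then reduced to $1/2$ by the a priori boundedness of $\cE$ coming from Lemma~\ref{lem:unibE}.
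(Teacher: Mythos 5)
Your proof is correct and follows essentially the same route as the paper's: integrate \eqref{eq:w22}, bound $\int_{\tau'}^\tau\cE\,d\sigma$ by monotonicity, and control the boundary difference in $\int|w|^2\rho$ by Cauchy--Schwarz against $\int|w_\sigma|^2\rho$ together with \eqref{eq:wno2}, which is exactly what the paper does (you phrase it as FTC plus Cauchy--Schwarz twice; the paper estimates the $\tau$-derivative pointwise and then Cauchy--Schwarz in $\sigma$, but the resulting inequality is identical). The one genuine difference is the final step: you absorb $\cE(\tau')^{1/(p+1)}$ into the constant via the a priori bound $\cE(\tau')\le C$ from Lemma~\ref{lem:unibE}, whereas the paper dispenses with that and uses the purely algebraic inequality $s^{1/2+1/(p+1)}\le f(s)$ valid for all $s\ge0$. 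Both yield the claimed estimate with the stated constant dependence, but the paper's route is self-contained and does not require $\cE$ to be bounded, which is slightly more robust (and is, in effect, what the unusual shape of $f$ is designed to accommodate).
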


\begin{proof}
By the same computations as in \eqref{eq:etp1}, 
we have 
\begin{equation}\label{eq:wp1w2E}
\begin{aligned}
	\int_{\R^n} |w|^{p+1} \rho d\eta 
	&= 
	\frac{p+1}{2(p-1)} \frac{d}{d\tau} \int_{\R^n} w^2 \rho d\eta 
	+ \frac{2(p+1)}{p-1} \cE(\tau) \\
	&\leq  C\left( -\frac{d\cE}{d\tau} \right)^\frac{1}{2} 
	\left(\int_{\R^n} w^2\rho d\eta \right)^\frac{1}{2}
	+ C \cE(\tau). 
\end{aligned}
\end{equation}
Then by \eqref{eq:wno2} and the monotonicity of $\cE$, we see that 
\[
\begin{aligned}
	\iint |w|^{p+1} \rho 
	&\leq 
	C( \cE(\tau')-\cE(\tau) )^\frac{1}{2}  
	\left( \iint w^2\rho \right)^\frac{1}{2} 
	+ C \int_{\tau'}^\tau \cE(\sigma) d\sigma \\
	&\leq 
	C\cE(\tau')^\frac{1}{2}  
	( (\tau-\tau') \cE^\frac{2}{p+1}(\tau') )^\frac{1}{2} 
	+ C \cE(\tau') (\tau-\tau'), 
\end{aligned}
\]
where 
$\iint(\cdots)=\int_{\tau'}^\tau \int_{\R^n} (\cdots) d\eta d\sigma$. 
This together with 
$s^{1/2+1/(p+1)}\leq Cf(s)$ for $s\geq0$ shows the lemma. 
\end{proof}

By Lemma \ref{lem:wp1ene}, 
we give uniform estimates of Morrey type. 
Throughout this paper, we write 
$B_r(x):= \{y\in \R^n; |x-y|<r\}$ and $B_r:=B_r(0)$ 
for $r>0$ and $x\in\R^n$.

\begin{lemma}\label{lem:unien}
There exists $C>0$ depending only on $n$, $p$, 
$M$ and $\delta_1$ such that  
the following (i) and (ii) hold for any $\tilde x\in\R^n$ and $T_1<t<0$. 
\begin{itemize}
\item[(i)]
$\displaystyle (-t)^{\frac{2}{p-1}-\frac{n}{2}} \int_t^{t/2} 
\int_{B_{\sqrt{-t/2}}(\tilde x)} 
( |\nabla u(x,s)|^2 + |u(x,s)|^{p+1} ) dxds 
\leq C$. 
\item[(ii)]
$\displaystyle (-t)^{\frac{2}{p-1}-\frac{n}{2}+1} \int_t^{t/2} 
\int_{B_{\sqrt{-t/2}}(\tilde x)} 
u_t^2(x,s) dxds 
\leq C$. 
\end{itemize}
\end{lemma}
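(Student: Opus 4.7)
The plan is to convert the stated space-time integrals into backward similarity variables centered at $(\tilde x, 0)$ and then apply Lemmas \ref{lem:Emono}, \ref{lem:cEprop}, \ref{lem:unibE}, and \ref{lem:wp1ene}. Fix $\tilde t = 0$ and abbreviate $w := w_{(\tilde x, 0)}$, $\cE := \cE_{(\tilde x, 0)}$, $\tau_t := -\log(-t)$. Under $y = \tilde x + e^{-\tau/2}\eta$, $s = -e^{-\tau}$, the slab $(y,s) \in B_{\sqrt{-t/2}}(\tilde x) \times [t, t/2]$ corresponds to $\tau \in [\tau_t, \tau_t + \log 2]$ and $|\eta| = |y - \tilde x|/\sqrt{-s} \leq 1$. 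Since $\rho(\eta) \geq e^{-1/4}$ on $B_1$, local $\eta$-integrals over $B_1$ are dominated by their Gaussian-weighted counterparts on $\R^n$.

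For part (i), using $u = e^{\tau/(p-1)} w$, $\nabla_y u = e^{\tau/(p-1) + \tau/2}\nabla_\eta w$ and $dy\,ds = e^{-(n/2+1)\tau}\, d\eta\, d\tau$, a direct change of variables gives
\[
\int_t^{t/2} \int_{B_{\sqrt{-t/2}}(\tilde x)} (|\nabla u|^2 + |u|^{p+1})\, dy\, ds
= \int_{\tau_t}^{\tau_t + \log 2} \int_{B_1} e^{\alpha \tau} (|\nabla_\eta w|^2 + |w|^{p+1})\, d\eta\, d\tau,
\]
where $\alpha := 2/(p-1) - n/2 < 0$ precisely because $p > p_S$. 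Hence $e^{\alpha \tau} \leq e^{\alpha \tau_t} = (-t)^{-\alpha}$, so after multiplying by the prefactor $(-t)^{\alpha}$ the desired bound reduces to
\[
\int_{\tau_t}^{\tau_t + \log 2} \int_{\R^n} (|\nabla w|^2 + |w|^{p+1})\, \rho\, d\eta\, d\tau \leq C.
\]
The $|w|^{p+1}$-integral is controlled by Lemma \ref{lem:wp1ene} applied with $\tau' = \tau_t$, combined with Lemma \ref{lem:unibE}. For $|\nabla w|^2$, the definition \eqref{eq:cEdef} yields $\int |\nabla w|^2 \rho\, d\eta \leq 2\cE(\tau) + \frac{2}{p+1}\int |w|^{p+1}\rho\, d\eta$; the two resulting contributions are then handled by the monotonicity of $\cE$ together with Lemmas \ref{lem:unibE} and \ref{lem:wp1ene}.

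For part (ii), the identity in Remark \ref{rem:wtSS} with $\tilde t = 0$ rearranges to
\[
u_t = e^{(1 + 1/(p-1))\tau}\, w_\tau + \frac{e^\tau}{p-1}\, u + \frac{e^\tau}{2}\, (y - \tilde x)\cdot \nabla u.
\]
Squaring, using $|y - \tilde x|^2 = e^{-\tau}|\eta|^2 \leq e^{-\tau}$ on $|\eta| \leq 1$, and performing the same change of variables, each of the three resulting integrals carries the common weight $e^{(\alpha + 1)\tau}$ in $\tau$. Since $\alpha + 1 < 0$ (again equivalent to $p > p_S$), the prefactor $(-t)^{\alpha + 1}$ absorbs $e^{(\alpha+1)\tau}$, and the estimate reduces to
\[
\int_{\tau_t}^{\tau_t + \log 2} \int_{\R^n} (|w_\tau|^2 + |w|^2 + |\nabla w|^2)\, \rho\, d\eta\, d\tau \leq C.
\]
The $|w_\tau|^2$-integral is bounded by $\cE(\tau_t) \leq C$ via the integrated monotonicity \eqref{eq:GKmn}; the $|w|^2$-integral is controlled by \eqref{eq:wno2} and Lemma \ref{lem:unibE}; and the $|\nabla w|^2$-integral has already been handled in (i).

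The main care needed is the bookkeeping of the exponential weights under the change of variables and the verification that the scaling prefactors $(-t)^{\alpha}$ and $(-t)^{\alpha + 1}$ exactly cancel $e^{\alpha\tau}$ and $e^{(\alpha+1)\tau}$ on the $\log 2$-long $\tau$-interval. The hypothesis $p > p_S$ enters here through the strict negativity of both $\alpha$ and $\alpha + 1$, which is what makes the monotone estimate $e^{\alpha\tau} \leq e^{\alpha\tau_t}$ (and its $\alpha+1$ analogue) work in the direction we need.
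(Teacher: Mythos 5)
Your proof is correct and follows essentially the same route as the paper: both pass to the backward similarity variables centered at $(\tilde x,0)$ (equivalently, insert the Gaussian weight $K_{(\tilde x,0)}$ at the cost of a constant on the parabolic cylinder), bound the $|w|^{p+1}$ and $|\nabla w|^2$ integrals via Lemmas \ref{lem:wp1ene} and \ref{lem:unibE} and the definition \eqref{eq:cEdef}, and for (ii) use the integrated monotonicity \eqref{eq:GKmn} to control $|w_\tau|^2$ and then separate $u_t$ from the self-similarity operator $S$ of Remark \ref{rem:wtSS}. The only cosmetic difference is in (ii), where you control the zeroth-order contribution by \eqref{eq:wno2} applied to $|w|^2$, while the paper instead bounds the corresponding $|u|^2$ term by part (i) and the H\"older inequality; both are valid.
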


\begin{proof}
Let $\tilde x\in\R^n$, $T_1< t< 0$, 
$\tau=-\log(-t/2)$ and $\tau'=-\log(-t)$. 

(i) By Lemma \ref{lem:wp1ene} with Lemma \ref{lem:unibE} 
and $\tau-\tau'=\log 2$, we obtain 
\[
\begin{aligned}
	&(-t)^{\frac{2}{p-1}-\frac{n}{2}} \int_t^{t/2} 
	\int_{B_{\sqrt{-t/2}}(\tilde x)} |u(x,s)|^{p+1}  dxds  \\
	&\leq 
	C\int_t^{t/2} (-s)^\frac{2}{p-1} \int_{\R^n}
	|u|^{p+1} K_{(\tilde x,0)}dxds 
	\leq 
	C\int_{\tau'}^\tau \int_{\R^n}
	|w_{(\tilde x,0)}|^{p+1} \rho d\eta d\sigma \leq C. 
\end{aligned}
\]
This together with  \eqref{eq:cEdef} and Lemma \ref{lem:unibE} 
shows the estimate for $|\nabla u|^2$.

(ii) Lemmas \ref{lem:Emono} and \ref{lem:unibE} show that  
$\int_{\tau'}^\tau \int_{\R^n} 
|w_\tau|^2 \rho d\eta d\sigma = \cE(\tau') - \cE(\tau) \leq C$, 
where we note that 
$\partial_\tau w_{(\tilde x, 0)}(\eta,\tau)
= -(-t)^{1/(p-1)} S_{(\tilde x,0)}[u](x,t)$ 
and $S$ is given in Remark \ref{rem:wtSS}. 
Switching back to the original variables, we have 
\[
	\int_t^{t/2} (-s)^{\frac{2}{p-1}-\frac{n}{2}-1}
	\int_{\R^n} | S_{(\tilde x,0)}[u](x,s) |^2 
	e^{-\frac{|x-\tilde x|^2}{4(-s)}} dxds 
	\leq C. 
\]
Then, $|x-\tilde x|^2 e^{-|x-\tilde x|^2/(4(-s))} \leq C(-s)$  gives 
\[
\begin{aligned}
	C &\geq 
	(-t)^{\frac{2}{p-1}-\frac{n}{2}-1}
	\int_t^{t/2} \int_{B_{\sqrt{-t/2}}(\tilde x)} 
	| S_{(\tilde x,0)}[u](x,s) |^2 
	e^{-\frac{|x-\tilde x|^2}{4(-s)}} dxds \\
	&\geq 
	(-t)^{\frac{2}{p-1}-\frac{n}{2}-1} 
	\int_t^{t/2} \int_{B_{\sqrt{-t/2}}(\tilde x)} 
	\left( \frac{(-s)^2 |u_t|^2}{C} 
	- C|u|^2 - C(-t) |\nabla u|^2  \right) dxds
\end{aligned}
\]
Hence by (i) and the H\"older inequality, we obtain the desired estimate. 
\end{proof}

In what follows, we give estimates of $u_k$ defined by \eqref{eq:ukdef}. 
For $\tilde x\in\R^n$, $k\geq1$ and $T_k^{-1}<t<\tilde t\leq0$, 
define  
\begin{equation}\label{eq:EkwkcEk}
\left\{ 
\begin{aligned}
	&E_k(t)=E_{k,(\tilde x,\tilde t)}(t) := (\tilde t-t)^{\frac{2}{p-1}+1} \\
	&\quad \times \int_{\R^n} \left( \frac{|\nabla u_k|^2}{2} - \frac{|u_k|^{p+1}}{p+1} 
	+\frac{|u_k(x,t)|^2}{2(p-1)(\tilde t-t)} \right) K_{(\tilde x,\tilde t)} dx, \\
	&w_k(\eta,\tau)=w_{k,(\tilde x, \tilde t)}(\eta,\tau)
	:= e^{-\frac{1}{p-1}\tau} 
	u_k(\tilde x+e^{-\frac{1}{2}\tau}\eta,  \tilde t - e^{-\tau}), \\
	&\cE_k(\tau)=\cE_{k,(\tilde x,\tilde t)}(\tau) 
	:= \int_{\R^n} \left( \frac{|\nabla w_k|^2}{2} 
	- \frac{|w_k|^{p+1}}{p+1} + \frac{|w_k|^2}{2(p-1)} \right) \rho d\eta. 
\end{aligned}
\right. 
\end{equation}
where $\rho(\eta)=e^{-|\eta|^2/4}$. 
We note that 
\begin{equation}\label{eq:EcEk}
	E_{k,(\tilde x,\tilde t)}(t)
	=\cE_{k,(\tilde x,\tilde t)}(\tau) \quad \mbox{ with }\tau=-\log(\tilde t-t). 
\end{equation}
Since $u_k$ satisfies \eqref{eq:ukfujita}, 
the backward rescaled sequence $w_k$ satisfies 
\[
	\rho  \partial_\tau w_k 
	= \nabla \cdot (\rho \nabla  w_k) - \frac{1}{p-1} w_k\rho +|w_k|^{p-1}w_k\rho
	\quad \mbox{ in } \R^n\times(\tau_k, \infty), 
\]
where $\tau_k:=-\log(\tilde t - T_k^{-1})$. 
Then the following monotonicity formula holds:

\begin{lemma}\label{lem:monok}
For any $\tilde x\in \R^n$, $k\geq1$, $T_k^{-1}<\tilde t\leq0$ 
and $\tau>\tau_k$, 
\begin{equation}\label{eq:Emonok}
	\frac{d \cE_{k,(\tilde x,\tilde t)}}{d\tau}(\tau)
	= - \int_{\R^n} 
	|\partial_\tau w_{k,(\tilde x,\tilde t)}(\eta,\tau)|^2 
	\rho(\eta) d\eta. 
\end{equation}
In particular, $\cE_{(\tilde x,\tilde t)}(\tau)$ 
is nonincreasing in $\tau$. 
Moreover, there exists $C>0$ depending only on $n$ and $p$ such that 
\begin{align}
	&\begin{aligned}
	&\frac{1}{2} \frac{d}{d\tau} \int_{\R^n} |w_{k,(\tilde x,\tilde t)}(\eta,\tau)|^2 
	\rho(\eta) d\eta \\
	&= -2\cE_{k,(\tilde x,\tilde t)}(\tau) 
	+ \frac{p-1}{p+1} \int_{\R^n} |w_{k,(\tilde x,\tilde t)}(\eta,\tau)|^{p+1} 
	\rho(\eta) d\eta,  
	\end{aligned}
	\label{eq:w22k} \\
	&\cE_{k,(\tilde x,\tilde t)}(\tau)\geq 0, \quad 
	\int_{\R^n} |w_{k,(\tilde x,\tilde t)}(\eta,\tau)|^2 \rho(\eta) d\eta 
	\leq C\cE_{k,(\tilde x,\tilde t)}^\frac{2}{p+1}(\tau'),  \label{eq:wno2k}
\end{align}
for any $k\geq1$ and $\tau>\tau'>\tau_k$. 
\end{lemma}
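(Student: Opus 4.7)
The strategy is to observe that this lemma is the verbatim analogue of Lemmas \ref{lem:Emono} and \ref{lem:cEprop} with the original pair $(u,w)$ replaced by the rescaled pair $(u_k,w_k)$, so the proofs of those two lemmas transfer with only cosmetic changes. The transfer is justified because the scaling $u \mapsto u_k$ defined in \eqref{eq:ukdef} preserves the nonlinear heat equation (as recorded in \eqref{eq:ukfujita}), and because the definitions in \eqref{eq:EkwkcEk} are exactly the $(\tilde x,\tilde t)$-centered Giga--Kohn quantities built out of $u_k$, so $w_k$ satisfies the same similarity-variable equation that $w$ does on its domain, namely
\begin{equation*}
\rho \, \partial_\tau w_k = \nabla\cdot(\rho\nabla w_k) - \frac{1}{p-1} w_k \rho + |w_k|^{p-1} w_k \rho
\end{equation*}
on $\R^n\times(\tau_k,\infty)$, as already noted just before the statement. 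The relevant smoothness of $u_k$ on $\R^n\times(T_k^{-1},0)$ is inherited from \eqref{eq:uspace}, which is enough to justify the differentiations and integration by parts below.

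First I would establish \eqref{eq:Emonok}. Differentiating \eqref{eq:EkwkcEk} in $\tau$ and integrating the gradient term by parts against the Gaussian weight $\rho$ gives
\begin{equation*}
\frac{d\cE_k}{d\tau}(\tau) = \int_{\R^n} \bigl( -\nabla\cdot(\rho\nabla w_k) + \tfrac{1}{p-1} w_k \rho - |w_k|^{p-1} w_k \rho \bigr) \, \partial_\tau w_k \, d\eta,
\end{equation*}
exactly as in the proof of Lemma \ref{lem:Emono}. Using the $w_k$-equation above to substitute for the expression in parentheses yields $-\int_{\R^n} |\partial_\tau w_k|^2 \rho \, d\eta$, which is \eqref{eq:Emonok}, and therefore $\cE_k$ is nonincreasing.

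Next I would derive \eqref{eq:w22k} by multiplying the $w_k$-equation by $w_k$ and integrating against $\rho$ in $\R^n$; integration by parts produces
\begin{equation*}
\frac{1}{2}\frac{d}{d\tau} \int_{\R^n} w_k^2 \rho \, d\eta = \int_{\R^n} \Bigl( -|\nabla w_k|^2 - \tfrac{1}{p-1} w_k^2 + |w_k|^{p+1} \Bigr) \rho \, d\eta,
\end{equation*}
and rewriting the right-hand side via the definition of $\cE_k$ in \eqref{eq:EkwkcEk} gives \eqref{eq:w22k}.

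Finally, for \eqref{eq:wno2k}: the nonnegativity $\cE_k \geq 0$ follows from the decay at infinity of $\cE_k$ for $w_k$ in the natural class (the argument of \cite[Propositions 2.1, 2.2]{GK87}, which uses only the monotonicity and the global-in-forward-$\tau$ existence of $w_k$; since $\tau$ can be taken arbitrarily large for fixed $\tilde t$ inside the maximal existence interval of $u_k$, the same reasoning applies). For the $L^2$ bound, combine \eqref{eq:w22k} with $\cE_k \leq \cE_k(\tau')$ (monotonicity) and H\"older's inequality $\int w_k^2 \rho \leq C (\int |w_k|^{p+1} \rho)^{2/(p+1)}$, producing an autonomous differential inequality for $y(\tau) := \int_{\R^n} w_k^2 \rho \, d\eta$ of the form $y' \geq -4\cE_k(\tau') + c\, y^{(p+1)/2}$. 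Since the $\tau$-interval $(\tau',\infty)$ has infinite length, $y$ cannot remain above the equilibrium $\bigl( 4\cE_k(\tau')/c\bigr)^{2/(p+1)}$, yielding \eqref{eq:wno2k} with the constant absorbed into $C$; the ODE argument is the one used for $w$ itself in the proofs of \cite[Propositions 2.1, 2.2]{GK87} and \cite[Proposition 23.8]{QSbook2}.

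The only point that requires any care is checking that those ODE and monotonicity arguments, designed for $w$, apply uniformly for the rescaled $w_k$ on the interval $\tau > \tau_k$. This is essentially automatic: the equation and the identities \eqref{eq:Emonok}--\eqref{eq:w22k} are pointwise in $k$, and the ODE comparison for \eqref{eq:wno2k} uses only $\tau \in (\tau', \infty)$ and the global existence of the solution in similarity variables, both of which hold for each fixed $k$. I do not expect a genuine obstacle, so the proof is essentially a one-line reduction to the proofs of Lemmas \ref{lem:Emono} and \ref{lem:cEprop}.
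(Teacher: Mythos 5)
Your proposal is correct and is essentially the paper's own proof, which simply observes that $w_k$ satisfies the same similarity-variable equation as $w$ on $\tau>\tau_k$ and therefore the computations of Lemmas \ref{lem:Emono} and \ref{lem:cEprop} carry over verbatim. Your expansion of that one-line reduction, including the remark that the ODE argument for \eqref{eq:wno2k} only needs the forward-in-$\tau$ interval to be infinite (which holds for each fixed $k$ since $\tilde t\le 0$ is the blow-up time of $u_k$), is accurate.
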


\begin{proof}
Since $w_k$ satisfies the same equation as that of $w$, 
this lemma follows from the same computations as in 
Lemmas \ref{lem:Emono} and \ref{lem:cEprop}. 
\end{proof}

We give a uniform estimate for $\cE_k$. 

\begin{lemma}\label{lem:unibEk}
There is $C>0$ depending only on 
$n$, $p$, $M$ and $\delta_1$ such that 
\[
	\cE_{k,(\tilde x,\tilde t)}(\tau) \leq C 
\]
for all $\tilde x\in\R^n$, $k\geq1$, $-T_1/T_k< \tilde t\leq 0$ 
and $\tau\geq \tau_k':=-\log(\tilde t+T_1/T_k)$. 
\end{lemma}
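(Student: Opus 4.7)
The plan is to reduce this uniform estimate on $\cE_k$ to the analogous estimate Lemma \ref{lem:unibE} for the unrescaled energy $\cE$ by exploiting the scaling invariance of the Giga--Kohn energy. Recall that $u_k(x,t) = (-T_k)^{1/(p-1)} u(\sqrt{-T_k}\,x, (-T_k)t)$, i.e.\ $u_k$ is obtained from $u$ by the natural parabolic scaling with $\lambda = \sqrt{-T_k}$. The first step is to trace this scaling through the definitions in \eqref{eq:EkwkcEk}: a direct substitution yields
\[
	w_{k,(\tilde x,\tilde t)}(\eta,\tau)
	= w_{(\tilde x',\tilde t')}(\eta,\tau'),
\]
where I set $\tilde x' := \sqrt{-T_k}\,\tilde x$, $\tilde t' := (-T_k)\tilde t$, and $\tau' := \tau - \log(-T_k)$. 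Consequently
\[
	\cE_{k,(\tilde x,\tilde t)}(\tau)
	= \cE_{(\tilde x',\tilde t')}(\tau'),
\]
because the integrand of $\cE$ is invariant under this change of variables (the $\rho$-weight and the $d\eta$ integration transform consistently).

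Next I verify that the hypotheses of Lemma \ref{lem:unibE} are satisfied by $(\tilde x',\tilde t',\tau')$. The constraint $\tilde t > -T_1/T_k$ translates under multiplication by $-T_k > 0$ into $\tilde t' > T_1$, while $\tilde t \leq 0$ gives $\tilde t' \leq 0$; thus $T_1 < \tilde t' \leq 0$, which is exactly the range allowed in Lemma \ref{lem:unibE}. For the $\tau$-condition,
\[
	\tau' - \tau_1
	= \tau - \log(-T_k) + \log(\tilde t' - T_1)
	= \tau + \log\bigl((-T_k)^{-1}(\tilde t' - T_1)\bigr)
	= \tau - \tau_k' \geq 0,
\]
since $(-T_k)^{-1}(\tilde t' - T_1) = \tilde t + T_1/T_k$. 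Hence $\tau' \geq \tau_1 = -\log(\tilde t' - T_1)$.

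Applying Lemma \ref{lem:unibE} to $(\tilde x',\tilde t',\tau')$ yields $\cE_{(\tilde x',\tilde t')}(\tau') \leq C$ for a constant $C$ depending only on $n,p,M,\delta_1$, and by the identification above this gives $\cE_{k,(\tilde x,\tilde t)}(\tau) \leq C$ as required. No serious obstacle is expected: the content of the lemma is entirely the scaling bookkeeping, the substantive input being the already established Lemma \ref{lem:unibE}. The only point that needs a little care is to keep signs straight (since $T_k, T_1 < 0$) when checking that the rescaled time $\tilde t'$ still lies in the admissible window $(T_1,0]$ uniformly in $k$.
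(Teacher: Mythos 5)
Your proof is correct and follows essentially the same route as the paper: both establish the identity $\cE_{k,(\tilde x,\tilde t)}(\tau) = \cE_{(\sqrt{-T_k}\tilde x,\,-T_k\tilde t)}(\tau-\log(-T_k))$ via the parabolic scaling and then invoke Lemma \ref{lem:unibE} after checking that the translated parameters land in its admissible range. Your verification of the sign bookkeeping ($T_1<\tilde t'\le 0$ and $\tau'\ge\tau_1$) matches what the paper leaves implicit.
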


\begin{proof}
From \eqref{eq:EcEk}, the change of variables and \eqref{eq:EcE}, 
it follows that 
\begin{equation}\label{eq:EkEtra}
\begin{aligned}
	\cE_{k,(\tilde x,\tilde t)}(\tau)
	&= E_{k,(\tilde x,\tilde t)}(t)
	= E_{(\sqrt{-T_k}\tilde x, -T_k \tilde t)}(-T_k t) \\
	&= \cE_{(\sqrt{-T_k}\tilde x, -T_k \tilde t)}(\tau-\log(-T_k)). 
\end{aligned}
\end{equation}
This together with Lemma \ref{lem:unibE} implies that 
$\cE_{k,(\tilde x,\tilde t)}(\tau) \leq C$ 
if $T_1<-T_k \tilde t\leq0$ and 
$\tau-\log(-T_k) \geq -\log( -T_k \tilde t -T_1 )$. 
Then the lemma follows. 
\end{proof}

Lemmas \ref{lem:monok} and \ref{lem:unibEk} 
deduce analogs of Lemmas \ref{lem:Ep1} and \ref{lem:wp1ene}. 

\begin{lemma}\label{lem:wp1Ek}
There is $C>0$ depending only on 
$n$, $p$, $M$ and $\delta_1$ such that 
\begin{align}
	&\begin{aligned}
	\int_{\tau'}^\tau \cE_{k,(\tilde x,\tilde t)} d\sigma 
	&\leq 
	C \int_{\tau'}^\tau \int_{\R^n} |w_{k,(\tilde x,\tilde t)}|^{p+1} 
	\rho d\eta d\sigma  \\
	&\quad 
	+ C (\tau-\tau')^\frac{p-1}{2(p+1)}
	\left(\int_{\tau'}^\tau \int_{\R^n} |w_{k,(\tilde x,\tilde t)}|^{p+1} 
	\rho d\eta d\sigma 
	\right)^\frac{1}{p+1}, 
	\end{aligned}
	\label{eq:Ep1k}   \\
	&\int_{\tau'}^\tau \int_{\R^n} 
	|w_{k,(\tilde x,\tilde t)}(\eta,\sigma)|^{p+1} \rho(\eta) d\eta d\sigma 
	\leq C f( \cE_{k,(\tilde x,\tilde t)}(\tau') )
	f(\tau-\tau'),   \label{eq:p1Ek}
\end{align}
for all $\tilde x\in\R^n$, $k\geq1$, $-T_1/T_k< \tilde t\leq 0$ 
and $\tau\geq \tau_k'$. 
\end{lemma}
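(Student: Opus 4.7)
The plan is to imitate verbatim the arguments used for the original solution in Lemmas \ref{lem:Ep1} and \ref{lem:wp1ene}, now applied to the rescaled sequence. All the structural ingredients needed are already in place: Lemma \ref{lem:monok} gives the monotonicity identity \eqref{eq:Emonok}, the differential identity \eqref{eq:w22k}, the nonnegativity and $L^2$-control \eqref{eq:wno2k} for $w_k$; and Lemma \ref{lem:unibEk} provides the crucial uniform bound $\cE_{k,(\tilde x,\tilde t)}(\tau')\leq C$ for $\tau'\geq\tau_k'$. Thus no new input beyond these facts should be needed, and in particular the dependence of $C$ on $n,p,M,\delta_1$ comes entirely from Lemma \ref{lem:unibEk}.

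For the first inequality \eqref{eq:Ep1k}, I would start from \eqref{eq:w22k}, rewritten as
\[
\cE_k(\tau)=-\frac{p+1}{4(p-1)}\frac{d}{d\tau}\int_{\R^n}|w_k|^2\rho\,d\eta+\frac{1}{2}\int_{\R^n}|w_k|^{p+1}\rho\,d\eta,
\]
and then use \eqref{eq:Emonok} together with Cauchy--Schwarz in the form
\[
\Bigl|\tfrac{d}{d\tau}\!\!\int|w_k|^2\rho\Bigr|\leq C\Bigl(-\tfrac{d\cE_k}{d\tau}\Bigr)^{1/2}\Bigl(\int|w_k|^2\rho\Bigr)^{1/2}
\]
to obtain the pointwise-in-$\tau$ bound analogous to \eqref{eq:etp1}. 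Integrating on $(\tau',\tau)$, applying Cauchy--Schwarz in $\sigma$, telescoping the derivative of $\cE_k$, and controlling $\int|w_k|^2\rho$ via \eqref{eq:wno2k} and Lemma \ref{lem:unibEk} gives exactly the claimed right-hand side (the Hölder exponent $(p-1)/(2(p+1))$ arises from pairing $\int\rho$ with $(\int|w_k|^{p+1}\rho)^{1/(p+1)}$).

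For the second inequality \eqref{eq:p1Ek}, I would start again from \eqref{eq:w22k} to write
\[
\int_{\R^n}|w_k|^{p+1}\rho\,d\eta=\tfrac{p+1}{2(p-1)}\tfrac{d}{d\tau}\!\!\int|w_k|^2\rho\,d\eta+\tfrac{2(p+1)}{p-1}\cE_k(\tau),
\]
estimate the first term by $C(-d\cE_k/d\tau)^{1/2}(\int|w_k|^2\rho)^{1/2}$, integrate on $(\tau',\tau)$, apply Cauchy--Schwarz and the monotonicity of $\cE_k$ to bound the telescoped term by $\cE_k(\tau')^{1/2}\bigl((\tau-\tau')\cE_k(\tau')^{2/(p+1)}\bigr)^{1/2}$, and use the monotonicity again on the $\cE_k$ term. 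The inequality $s^{1/2+1/(p+1)}\leq Cf(s)$ for $s\geq 0$ (with $f(s)=s+s^{1/2}$) then folds the two contributions into the product $f(\cE_k(\tau'))f(\tau-\tau')$.

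There is no real obstacle here beyond bookkeeping: the main point is that $w_k$ solves the same equation in self-similar variables as $w$, so the computations of Section \ref{sec:GKene} transfer verbatim; the only substantive use of the contradiction hypothesis \eqref{eq:seq} is hidden inside Lemma \ref{lem:unibEk}, which already absorbs the $\delta_1$-dependence via the bound \eqref{eq:cM}. Accordingly, the proof reduces to checking that the computations of Lemmas \ref{lem:Ep1} and \ref{lem:wp1ene} use only \eqref{eq:w22}, \eqref{eq:wno2}, the monotonicity \eqref{eq:GKmn}, and the uniform bound of Lemma \ref{lem:unibE}, and that each of these has its $u_k$-analog recorded in Lemmas \ref{lem:monok} and \ref{lem:unibEk}.
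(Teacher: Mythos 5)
Your proposal follows the paper's proof exactly: the argument is a verbatim transfer of Lemmas \ref{lem:Ep1} and \ref{lem:wp1ene} to the $u_k$-setting, using \eqref{eq:w22k}, \eqref{eq:Emonok}, \eqref{eq:wno2k} and Lemma \ref{lem:unibEk} in place of their unrescaled counterparts, and the only real input beyond bookkeeping is that $\tau_k'>\tau_k$ so that the uniform bound of Lemma \ref{lem:unibEk} is available for $\tau'\geq\tau_k'$. One small slip in your sketch of \eqref{eq:Ep1k}: the double integral $\iint|w_k|^2\rho$ is not controlled via \eqref{eq:wno2k} and Lemma \ref{lem:unibEk} (that would only yield a $(\tau-\tau')^{1/2}$ bound, losing the $\iint|w_k|^{p+1}\rho$ factor), but rather via the Hölder inequality $\iint|w_k|^2\rho\leq(\iint|w_k|^{p+1}\rho)^{2/(p+1)}(\iint\rho)^{(p-1)/(p+1)}$, as your parenthetical about the exponent $(p-1)/(2(p+1))$ correctly indicates; \eqref{eq:wno2k} enters only through the nonnegativity $\cE_k\geq0$ needed to telescope $\cE_k(\tau')-\cE_k(\tau)\leq\cE_k(\tau')\leq C$.
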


\begin{proof}
Since \eqref{eq:w22k} holds, 
by the same computations as in \eqref{eq:etp1}, we have 
\[
	\cE_k(\tau) 
	\leq C\left( -\frac{d\cE_k}{d\tau} \right)^\frac{1}{2} 
	\left(\int_{\R^n} |w_k|^2\rho d\eta \right)^\frac{1}{2} 
	+ C \int_{\R^n} |w_k|^{p+1} \rho d\eta
\]
for $\tau>\tau_k$. 
By integrating this inequality with the aid of 
$\tau_k'>\tau_k$ for $k\geq1$ and Lemmas \ref{lem:monok} and \ref{lem:unibEk}, 
we obtain \eqref{eq:Ep1k} for $\tau\geq \tau_k'$. 
Similar computations to \eqref{eq:wp1w2E} show that 
\begin{equation}\label{eq:wp1w2Ek}
	\int_{\R^n} |w_k|^{p+1} \rho d\eta  
	\leq  C\left( -\frac{d\cE_k}{d\tau} \right)^\frac{1}{2} 
	\left(\int_{\R^n} |w_k|^2\rho d\eta \right)^\frac{1}{2} 
	+ C \cE_k(\tau) 
\end{equation}
for $\tau>\tau_k$. 
Integrating this inequality gives \eqref{eq:p1Ek}. 
\end{proof}

In the rest of this subsection, we give uniform estimates. 
By Lemma \ref{lem:unien}, we obtain estimates for space-time integrals of $u_k$.

\begin{lemma}\label{lem:unienk}
There is $C>0$ depending only on 
$n$, $p$, $M$ and $\delta_1$ such that 
\[
\begin{aligned}
	&\int_{-2R^2}^{-R^2} \int_{ B_R(\tilde x) } 
	(|\nabla u_k|^2 + |u_k|^{p+1}) dxdt 
	\leq C R^{n-\frac{4}{p-1}}, \\ 
	&\int_{-2R^2}^{-R^2} \int_{ B_R(\tilde x) }  |\partial_t u_k|^2  dxdt 
	\leq C R^{n-\frac{4}{p-1}-2}, 
\end{aligned}
\]
for any $\tilde x\in\R^n$, $R>0$ and 
$k$ satisfying $T_1/(2R^2)<T_k<0$. 
\end{lemma}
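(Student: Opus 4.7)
The natural approach is to reduce this to Lemma \ref{lem:unien} by pure scaling. Recall that $u_k$ is a parabolic rescaling of $u$: by the definition \eqref{eq:ukdef},
\begin{equation*}
	u_k(x,t) = (-T_k)^{\frac{1}{p-1}} u(\sqrt{-T_k}\,x,\,(-T_k)t),
\end{equation*}
and the ambient equation is scale-invariant with exactly this scaling. Consequently the integrand combinations $|\nabla u_k|^2$, $|u_k|^{p+1}$ and $|\partial_t u_k|^2$ transform homogeneously, and the exponents $n-\tfrac{4}{p-1}$ and $n-\tfrac{4}{p-1}-2$ appearing in the statement are precisely those dictated by this scaling.

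The plan is as follows. First, introduce the change of variables $y=\sqrt{-T_k}\,x$, $s=(-T_k)t$. Under this change, the parabolic cylinder $B_R(\tilde x)\times(-2R^2,-R^2)$ for $u_k$ is mapped to the cylinder $B_{\sqrt{-T_k}R}(\sqrt{-T_k}\,\tilde x)\times(2R^2T_k,\,R^2T_k)$ for $u$. Setting
\begin{equation*}
	\tilde x^\ast := \sqrt{-T_k}\,\tilde x, \qquad t^\ast := 2R^2 T_k,
\end{equation*}
the image cylinder is exactly $B_{\sqrt{-t^\ast/2}}(\tilde x^\ast)\times(t^\ast,t^\ast/2)$, which is the shape appearing in Lemma \ref{lem:unien}. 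Second, a direct computation of Jacobians shows
\begin{equation*}
	\int_{-2R^2}^{-R^2}\!\!\int_{B_R(\tilde x)}\!\!(|\nabla u_k|^2+|u_k|^{p+1})\,dxdt
	= (-T_k)^{\frac{p+1}{p-1}-\frac{n}{2}-1}\!\!\int_{t^\ast}^{t^\ast/2}\!\!\int_{B_{\sqrt{-t^\ast/2}}(\tilde x^\ast)}\!\!(|\nabla u|^2+|u|^{p+1})\,dyds,
\end{equation*}
and an analogous identity with the factor $(-T_k)^{\frac{2}{p-1}-\frac{n}{2}+1}$ for the $|\partial_t u_k|^2$ integral.

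Third, I would verify that the hypothesis of Lemma \ref{lem:unien} is met at the shifted base point $(\tilde x^\ast,t^\ast)$: the requirement $T_1<t^\ast<0$ translates exactly to $T_1<2R^2T_k<0$, i.e.\ $T_1/(2R^2)<T_k<0$, which is precisely the hypothesis of the present lemma. Applying parts (i) and (ii) of Lemma \ref{lem:unien} then bounds the right-hand integrals by a constant times $(-t^\ast)^{n/2-2/(p-1)}$ and $(-t^\ast)^{n/2-2/(p-1)-1}$ respectively. Using $-t^\ast = 2R^2(-T_k)$, the powers of $(-T_k)$ cancel out exactly, leaving $C\,R^{n-4/(p-1)}$ and $C\,R^{n-4/(p-1)-2}$.

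There is no real obstacle here beyond careful bookkeeping of exponents; the scale invariance of the problem ensures the $(-T_k)$ factors cancel, which is both a consistency check and the main point of the computation. The only substantive assumption used on $T_k$ — namely $T_1/(2R^2)<T_k<0$ — is precisely what guarantees that the shifted time $t^\ast$ lies in the range where Lemma \ref{lem:unien} is valid.
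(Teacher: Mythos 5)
Your proof is correct and follows exactly the same route as the paper: change variables $y=\sqrt{-T_k}\,x$, $s=(-T_k)\,t$, observe that the cylinder maps onto one of the form in Lemma~\ref{lem:unien} with $t^\ast=2R^2T_k$, check that the hypothesis $T_1<t^\ast<0$ is precisely $T_1/(2R^2)<T_k<0$, and watch the $(-T_k)$ powers cancel. The exponent bookkeeping (noting $\frac{p+1}{p-1}-1=\frac{2}{p-1}$) is accurate, and this matches the paper's one-line computation.
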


\begin{proof}
From the change of variables and Lemma \ref{lem:unien}, it follows that 
\[
\begin{aligned}
	&\int_{-2R^2}^{-R^2} \int_{ B_R(\tilde x) } 
	(|\nabla u_k|^2 + |u_k|^{p+1} ) dxdt \\
	&=(-T_k)^{\frac{2}{p-1}-\frac{n}{2}} 
	\int_{2T_k R^2}^{T_k R^2} 
	\int_{B_{\sqrt{-T_k} R}(\sqrt{-T_k} \tilde x)} 
	(|\nabla u|^2 + |u|^{p+1} ) dyds 
	\leq CR^{n-\frac{4}{p-1}}
\end{aligned}
\]
and that 
\[
\begin{aligned}
	&\int_{-2R^2}^{-R^2} \int_{ B_R(\tilde x) } 
	|\partial_t u_k|^2  dxdt  \\
	&=(-T_k)^{\frac{2}{p-1}-\frac{n}{2}+1} 
	\int_{2T_k R^2}^{T_k R^2} 
	\int_{B_{\sqrt{-T_k} R}(\sqrt{-T_k} \tilde x)} 
	|\partial_t u|^2  dyds 
	\leq C R^{n-\frac{4}{p-1}-2}. 
\end{aligned}
\]
Then the lemma follows. 
\end{proof}

Based on \eqref{eq:wp1w2Ek}, we prove a uniform estimate 
concerning space integrals of $u_k(\cdot,t)$ for $t$ fixed. 

\begin{lemma}\label{lem:up1fixt}
Let $R>0$. Then there exists a subsequence still denoted by $u_k$ 
such that the following holds: 
For a.e.~$t\in (-\infty,0)$, 
there exist constants $k_t\geq1$ depending on $t$ and 
$C_t>0$ depending only on 
$n$, $p$, $M$, $\delta_1$, $R$ and $t$ such that, for any $k\geq k_t$, 
\[
	\int_{B_R} ( |\nabla u_k(x,t)|^2 + |u_k(x,t)|^{p+1} 
	+ |\partial_t u_k(x,t)|^2 ) dx \leq C_t. 
\]
\end{lemma}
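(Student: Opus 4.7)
The plan is to convert the spacetime $L^1_{\loc}$ estimates of Lemma~\ref{lem:unienk} into pointwise-in-$t$ bounds along a subsequence, via a Chebyshev/Borel--Cantelli diagonal extraction. Set
\[
F_k(x,t) := |\nabla u_k(x,t)|^2 + |u_k(x,t)|^{p+1} + |\partial_t u_k(x,t)|^2, \qquad g_k(t) := \int_{B_R} F_k(x,t)\,dx.
\]
We aim for a single subsequence such that $\limsup_k g_k(t) < \infty$ for a.e.~$t\in(-\infty,0)$.

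First, I would fix a countable sequence $\{R_m'\}_{m\geq 1}$ with $R_m' \geq R$ arranged so that the intervals $I_m := [-2R_m'^2, -R_m'^2]$ cover $(-\infty,0)$. Applying Lemma~\ref{lem:unienk} centered at $\tilde{x}=0$ with radius $R_m'$, and using the inclusion $B_R \subset B_{R_m'}$, yields
\[
\int_{I_m} g_k(t)\,dt \leq C_m
\]
for every $k$ with $T_1/(2R_m'^2) < T_k < 0$, i.e., for all $k$ sufficiently large (depending on $m$), where $C_m$ depends only on $n, p, M, \delta_1, R, R_m'$.

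Second, a standard diagonal extraction produces a single subsequence, still denoted $\{u_k\}$, along which the above $L^1$-in-$t$ estimate holds on every $I_m$ for $k$ past some $k_m$. Chebyshev's inequality then gives the weak-type bound $|\{t \in I_m : g_k(t) > \lambda\}| \leq C_m/\lambda$ uniformly in $k \geq k_m$.

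Third, to upgrade this $L^1$-in-$t$ boundedness into the pointwise-a.e. bound required by the lemma, I would perform a further sparse extraction: choose a subsequence along which the measures of the exceptional sets $\{t \in I_m : g_k(t) > \lambda_k\}$ are summable in $k$ (for a suitably chosen sequence $\lambda_k$), so that Borel--Cantelli excludes a Lebesgue-null set on each $I_m$ outside of which $g_k(t)$ stays bounded for all large $k$ along the subsequence. A final diagonalization over $m$ combines these extractions into one subsequence valid on the full cover $\bigcup_m I_m = (-\infty,0)$.

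The main obstacle is the third step: converting $L^1$-in-$t$ bounds into an $L^\infty$-type bound along a \emph{single} subsequence simultaneously on every $I_m$. Fatou readily provides $\liminf_k g_k(t) < \infty$ a.e., but upgrading $\liminf$ to $\limsup$ via the subsequence requires delicately coordinating the Chebyshev estimates with the sparse Borel--Cantelli extraction across the countable cover.
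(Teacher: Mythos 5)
Your reduction of the lemma to the $L^1_\loc$-in-$t$ bounds of Lemma~\ref{lem:unienk} cannot be completed: the ``third step'' you flag as the main obstacle is not a technicality but a genuine gap that the proposed tools cannot fill. Chebyshev gives $|\{t\in I_m : g_k(t) > \lambda\}|\leq C_m/\lambda$ uniformly in $k$ for each fixed $\lambda$; to make these measures summable over $k$ you must take $\lambda=\lambda_k\to\infty$, whereupon Borel--Cantelli yields only the $k$-dependent bound $g_{k_j}(t)\leq\lambda_{k_j}$, not the $k$-uniform bound $C_t$ the lemma demands. In fact the abstract implication your plan rests on is false: there exist nonnegative $g_k$ on a finite interval with $\sup_k\int g_k<\infty$ such that, for \emph{every} subsequence $k_j$, $\limsup_j g_{k_j}(t)=\infty$ almost everywhere. (Realize $(0,1)$ with Lebesgue measure as a probability space and take $g_k$ i.i.d.~nonnegative integrable random variables with unbounded support; for any subsequence and any $\lambda>0$, $\sum_j P(g_{k_j}>\lambda)=\infty$, so by the second Borel--Cantelli lemma $\limsup_j g_{k_j}=\infty$ a.s.) Thus the $L^1$-in-$t$ estimates of Lemma~\ref{lem:unienk} alone are provably insufficient, and no diagonalization can rescue the scheme.

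The paper's proof uses structure you do not exploit: the monotonicity of the weighted energy $\cE_{k}$. The key step is the pointwise-in-$\tau$ inequality \eqref{eq:wkp1t}, derived from \eqref{eq:wp1w2Ek} together with \eqref{eq:wno2k} and Lemma~\ref{lem:unibEk}, which bounds $\int_{\R^n}|w_k(\cdot,\tau)|^{p+1}\rho\,d\eta$ by $C(-d\cE_k/d\tau(\tau))^{1/2}+C$. This reduces the lemma to showing that $-d\cE_k/d\tau\to 0$ a.e.~$\tau$ along a subsequence; that convergence-to-zero (as opposed to mere $L^1$-boundedness) is exactly the extra ingredient your scheme lacks. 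It is obtained from the monotonicity: the quantity $F_k(r_0)$ in \eqref{eq:Fkrto0} is rewritten via \eqref{eq:Fkr0def} and the identity \eqref{eq:EkEtra} as a difference of two integrals of the original $E_{(\tilde x, 0)}(\theta)$ over intervals that both shrink to the blow-up time $\theta=0$; since $E$ is monotone and bounded, both integrals tend to the same finite limit, hence $F_k(r_0)\to 0$, which gives $-d\cE_k/d\tau\to 0$ in $L^1_\loc(\R)$ and a.e.~along a subsequence. The estimates for $|\nabla u_k(\cdot,t)|^2$ and $|\partial_t u_k(\cdot,t)|^2$ then follow from the same $\cE_k$-control and from \eqref{eq:Ekdercal}, respectively; note also that the $\iint|\partial_t u_k|^2$ bound in Lemma~\ref{lem:unienk} carries a different power of $R$ than the other two terms, a distinction your sketch elides when lumping them into a single $F_k$.
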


\begin{proof}
Throughout this proof, $C$ is independent of $t$ and $\tau$, 
but $C_t$ depends on $t$. 
These constants are independent of $k$. 
In addition, we take $(\tilde x,\tilde t)=(0,0)$ as the the center of the scaling. 
We note that $\tau_k=-\log(-T_k^{-1})$, 
$\tau_k'=-\log(T_1/T_k)$, $\tau_k'>\tau_k$ for $k\geq1$ 
and $\tau_k, \tau_k'\to -\infty$ as $k\to \infty$. 

We first estimate the integral of $|u_k(\cdot,t)|^{p+1}$. 
From \eqref{eq:wp1w2Ek}, the second inequality in \eqref{eq:wno2k} 
and Lemma \ref{lem:unibEk}, 
it follows that 
\begin{equation}\label{eq:wkp1t}
\begin{aligned}
	\int_{\R^n} |w_k|^{p+1} \rho d\eta  
	&\leq  C\left( -\frac{d\cE_k}{d\tau}(\tau) \right)^\frac{1}{2} 
	( \cE_k^\frac{2}{p+1}(\tau') )^\frac{1}{2}
	+ C \cE_k(\tau) \\
	&\leq  C\left( -\frac{d\cE_k}{d\tau}(\tau) \right)^\frac{1}{2} + C 
\end{aligned}
\end{equation}
for $\tau>\tau'\geq \tau_k'$. 
We claim that there exists a subsequence still denoted by 
$d\cE_k/d\tau$ such that 
\begin{equation}\label{eq:dEkto0}
	\mbox{for a.e.~}\tau\in\R, \quad 
	-\frac{d\cE_k}{d\tau}(\tau) \to 0  
	 \mbox{ as } k\to \infty. 
\end{equation}
This will be proved once we confirm that 
\begin{equation}\label{eq:Fkrto0}
	F_k(r_0):= \int_{3r_0/4}^{r_0} \frac{2}{s} 
	\int_{-\log(4s^2)}^{-\log (s^2)} 
	\left( -\frac{d\cE_k}{d\sigma}(\sigma) \right) d\sigma ds \to 0
\end{equation}
for each $r_0>0$ as $k\to\infty$. 
Indeed, since 
\begin{equation}\label{eq:Fkr0cal}
\begin{aligned}
	F_k(r_0) &\geq 
	\frac{2}{r_0} \int_{3r_0/4}^{r_0} 
	\int_{-\log(9r_0^2/4)}^{-\log (r_0^2)} 
	\left( -\frac{d\cE_k}{d\sigma}(\sigma) \right) d\sigma ds \\
	&\geq 
	\frac{1}{2} 
	\int_{-\log(9r_0^2/4)}^{-\log (r_0^2)} 
	\left( -\frac{d\cE_k}{d\sigma}(\sigma) \right) d\sigma\geq 0, 
\end{aligned}
\end{equation}
the validity of \eqref{eq:Fkrto0} implies that 
the sequence of nonnegative functions 
$-d\cE_k/d\sigma$ converges to $0$ strongly 
in $L^1(-\log((9/4)r_0^2), -\log (r_0^2))$ as $k\to\infty$. 
Thus, there exists a subsequence $-d\cE_k/d\tau$ 
such that $-d\cE_k/d\tau\to 0$ a.e.~in 
$(-\log((9/4)r_0^2), -\log (r_0^2))$. 
Since $r_0>0$ is arbitrary, \eqref{eq:dEkto0} follows. 

We prove \eqref{eq:Fkrto0}. Let $r_0>0$. 
By \eqref{eq:EcEk} and
the second equality in \eqref{eq:EkEtra}, we see that 
\begin{equation}\label{eq:Fkr0def}
\begin{aligned}
	F_k(r_0) &= 
	\int_{3r_0/4}^{r_0} \frac{2}{s} 
	( \cE_k(-\log (4s^2))-\cE_k(-\log (s^2)) ) ds  \\
	&= \int_{3r_0/4}^{r_0} \frac{2}{s} 
	( E(4T_ks^2) - E(T_ks^2) )  ds \\
	&= \int_{4T_kr_0^2}^{9T_kr_0^2/4} E(\theta) \frac{d\theta}{\theta} 
	- \int_{T_kr_0^2}^{9T_kr_0^2/16} E(\theta) \frac{d\theta}{\theta}. 
\end{aligned}
\end{equation}
For $0<\tilde r<1$, we define a function $H(\tilde r)$ by
\[
	H(\tilde r):= \int_{-\tilde r^2}^{-9\tilde r^2/16} 
	E(\theta) \frac{d\theta}{-\theta} 
	= \int_{-1}^{-9/16} 
	E(\tilde r^2 \theta) \frac{d\theta}{-\theta}. 
\]
Since $E$ is monotone decreasing and $\theta<0$, 
$H(\tilde r)\geq0$ is decreasing when $\tilde r$ is decreasing. 
Hence $\lim_{\tilde r\to0} H(\tilde r)$ exists. 
This together with \eqref{eq:Fkr0def} 
and $T_k\to0$ as $k\to\infty$ 
guarantees \eqref{eq:Fkrto0}. 
As stated above, \eqref{eq:dEkto0} also holds.

The estimate \eqref{eq:wkp1t} and the limit \eqref{eq:dEkto0} 
show that there exists a subsequence $w_k$ satisfying the following: 
For a.e.~$\tau\in \R$, there exist constants $k_\tau\geq1$ and 
$C>0$ such that 
\begin{equation}\label{eq:wktfix}
	\int_{\R^n} |w_k(\eta,\tau)|^{p+1} \rho(\eta) d\eta 
	\leq  C
\end{equation}
for $k\geq k_\tau$. 
By switching back to the variables of $u_k$, 
for a.e.~$t\in (-\infty,0)$, there exist constants $k_t\geq1$ and 
$C>0$ such that 
\[
	(-t)^{\frac{2}{p-1}+1} 
	\int_{\R^n} |u_k(x,t)|^{p+1} K_{(0,0)}(x,t) dx 
	\leq  C
\]
for $k\geq k_t$. Let $R>0$. Then, for a.e.~$t\in(-\infty,0)$, 
\[
	\int_{B_R} |u_k(x,t)|^{p+1}  dx \leq 
	C (-t)^{\frac{n}{2}-\frac{2}{p-1}-1} e^\frac{R^2}{4(-t)}
	\leq C_t. 
\]
for $k\geq k_t$. Hence we obtain the desired estimate 
for $|u_k(\cdot,t)|^{p+1}$.

Let us next estimate $|\nabla u_k(\cdot,t)|^2$. 
From the definition of $\cE_k$ in \eqref{eq:EkwkcEk}, 
\eqref{eq:wktfix} and 
Lemma \ref{lem:unibEk} with $\tau_k'\to-\infty$ as $k\to\infty$, 
it follows that, for a.e.~$\tau\in\R$, 
\[
	\int_{\R^n} |\nabla w_k|^2 \rho d\eta 
	\leq  \frac{2}{p+1} \int_{\R^n} |w_k|^{p+1} \rho d\eta
	+ 2\cE_k(\tau) \leq C
\]
for $k\geq k_\tau$. 
This implies the desired estimate for $|\nabla u_k(\cdot,t)|^2$.

Finally, we consider $|\partial_t u_k(\cdot,t)|^2$. 
By \eqref{eq:Emonok}, we see that 
\begin{equation}\label{eq:Ekdercal}
	-\frac{d \cE_k}{d\tau}(\tau) 
	=
	(-t)^\frac{2}{p-1} \int_{\R^n} 
	| S_{(0,0)}[u_k](x,t) |^2 K_{(0,0)}(x,t) dx
\end{equation}
for $\tau>\tau_k$ with $\tau=-\log(-t)$, 
where $S$ is given in Remark \ref{rem:wtSS}. 
By \eqref{eq:dEkto0}, for a.e.~$t\in(-\infty,0)$, we have 
\[
\begin{aligned}
	\int_{\R^n}  | S_{(0,0)}[u_k](x,t) |^2 K_{(0,0)}(x,t) dx 
	\leq C (-t)^{-\frac{2}{p-1}} \leq C_t
\end{aligned}
\]
for $k\geq k_t$. This together with the H\"older inequality 
and $|x|^2 e^{-|x|^2/(4(-t))}\leq C_t$ gives, for a.e.~$t\in(-\infty,0)$, 
\[
\begin{aligned}
	&\int_{B_R}  |\partial_t u_k(x,t)|^2 e^{-\frac{|x|^2}{4(-t)}} dx \\
	&\leq 
	C_t 
	+ C_t \int_{B_R}  |u_k|^2 e^{-\frac{|x|^2}{4(-t)}} dx
	+ C_t \int_{B_R}  |\nabla u_k|^2 |x|^2 e^{-\frac{|x|^2}{4(-t)}} dx 
	\leq C_t
\end{aligned}
\]
for $k\geq k_t$. 
The proof is complete. 
\end{proof}

Finally in this section, 
by using the assumption \eqref{eq:seq}, 
we show a uniform estimate for 
space integrals of $u_k$ at $t=-1$.

\begin{lemma}\label{lem:ukuni1}
Let $R>0$. Then there exists $C>0$ 
depending only on 
$n$, $p$, $M$, $\delta_1$ and $R$ such that, for all $k\geq 1$, 
\[
\begin{aligned}
	\int_{ B_R } (|\nabla u_k(x,-1)|^2 + |u_k(x,-1)|^{p+1} ) dx 
	\leq C. 
\end{aligned}
\]
\end{lemma}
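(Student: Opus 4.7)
The plan is to apply Lemma~\ref{lem:unibEk} at the center $(\tilde x,\tilde t)=(0,0)$ and scale $\tau=0$ (which corresponds to $t=-1$), and then extract the pointwise-in-time local bound by exploiting that $p>p_S$ forces $q_c>p+1$, together with the uniform $L^{q_c}$ control \eqref{eq:Tkbdd} of $u_k(\cdot,-1)$.

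First I would verify applicability of Lemma~\ref{lem:unibEk} with $\tilde t=0$. Since $T_1,T_k<0$ we have $T_1/T_k>0$, so the condition $-T_1/T_k<0=\tilde t$ holds; moreover $T_1<T_k<0$ gives $T_1/T_k>1$, so $\tau_k'=-\log(T_1/T_k)<0\leq \tau=0$. Lemma~\ref{lem:unibEk} then yields $\cE_{k,(0,0)}(0)\leq C$ uniformly in $k\geq 1$, with $C=C(n,p,M,\delta_1)$.

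Next, at $\tau=0$ the definition in \eqref{eq:EkwkcEk} gives $w_{k,(0,0)}(\eta,0)=u_k(\eta,-1)$, so dropping the non-negative $|w_k|^2$ term inside $\cE_k$ yields
\[
\int_{\R^n}\frac{|\nabla u_k(x,-1)|^2}{2}\,\rho(x)\,dx\leq \cE_{k,(0,0)}(0)+\frac{1}{p+1}\int_{\R^n}|u_k(x,-1)|^{p+1}\rho(x)\,dx.
\]
Since $p>p_S$ forces $q_c>p+1$, H\"older's inequality with exponents $(q_c/(p+1),q_c/(q_c-p-1))$ together with \eqref{eq:Tkbdd} would then control the weighted $L^{p+1}$ term:
\[
\int_{\R^n}|u_k(x,-1)|^{p+1}\rho(x)\,dx\leq M^{p+1}\left(\int_{\R^n}\rho(x)^{q_c/(q_c-p-1)}\,dx\right)^{(q_c-p-1)/q_c}\leq CM^{p+1},
\]
the Gaussian weight being integrable. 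Thus $\int_{\R^n}|\nabla u_k(\cdot,-1)|^2\rho\,dx\leq C$ uniformly, and localizing via $\rho(x)\geq e^{-R^2/4}$ for $x\in B_R$ gives the desired uniform bound on $\int_{B_R}|\nabla u_k(\cdot,-1)|^2\,dx$.

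Finally, for the $|u_k|^{p+1}$ contribution I would simply apply H\"older on the bounded ball $B_R$: using $q_c>p+1$ and \eqref{eq:Tkbdd},
\[
\int_{B_R}|u_k(x,-1)|^{p+1}\,dx\leq |B_R|^{1-(p+1)/q_c}M^{p+1}\leq C(n,p,M,R).
\]
I do not expect a substantive obstacle: the two ingredients needed—the global weighted energy bound from Lemma~\ref{lem:unibEk} and the uniform $L^{q_c}$ control at $t=-1$ from \eqref{eq:Tkbdd}—are already in hand, and the only point requiring care is checking that the base point $(\tilde x,\tilde t)=(0,0)$ with $\tau=0$ actually lies in the regime where Lemma~\ref{lem:unibEk} applies, which is what makes it possible to convert an a.e.\ in time statement (as in Lemma~\ref{lem:up1fixt}) into a statement at the specific time $t=-1$ for \emph{every} $k\geq 1$.
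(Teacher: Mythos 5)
Your proposal is correct and follows essentially the same approach as the paper: both use the uniform Giga--Kohn energy bound from Lemma~\ref{lem:unibEk} at the center $(0,0)$ and scale $\tau=0$ (equivalently $E_{k,(0,0)}(-1)=\cE_{k,(0,0)}(0)\leq C$), the H\"older inequality with $q_c>p+1$ together with \eqref{eq:Tkbdd} to handle the $|u_k|^{p+1}$ term against the Gaussian weight, and then localize on $B_R$. Your explicit verification that $\tau_k'\leq 0$ for all $k\geq 1$ is a detail the paper leaves implicit but is a correct and worthwhile check.
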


\begin{proof}
We take $(\tilde x,\tilde t)=(0,0)$ as the center of the scaling. 
By \eqref{eq:seq}, the H\"older inequality 
and \eqref{eq:Tkbdd}, we have 
\[
	\int_{\R^n} |u_k(x,-1)|^{p+1} K_{(0,0)}(x,-1) dx 
	\leq C \left( \int_{\R^n} |u_k(x,-1)|^{q_c} dx \right)^\frac{p+1}{q_c}  
	\leq C. 
\]
This implies the desired estimate for $|u_k(\cdot,-1)|^{p+1}$. 
Moreover, \eqref{eq:EkwkcEk} gives 
\[
	\int_{\R^n} |\nabla u_k(x,-1)|^{p+1} K_{(0,0)}(x,-1) dx 
	\leq 2E_{k,(0,0)}(-1) + C. 
\]
By \eqref{eq:EcEk} and Lemma \ref{lem:unibEk}, 
we see that $E_k(-1) = \cE_k(0)  \leq C$. 
Hence the desired estimate for $|\nabla u_k(\cdot,-1)|^2$ also holds. 
\end{proof}

\section{Defect measures}\label{sec:defect}
Let $u$ be a classical solution of \eqref{eq:fujita} satisfying 
\eqref{eq:seq}. Define $u_k$ by \eqref{eq:ukdef}. 
In this section, we show the following strong convergence properties: 

\begin{proposition}\label{pro:stcon}
Suppose \eqref{eq:seq}. 
Then there exist a subsequence $u_k$ 
and a function $\overline{u}$ on $\R^n\times(-\infty,0)$ 
such that 
$u_k\to\overline{u}$ strongly in $L^{p+1}_\loc(\R^n\times(-\infty,0))$ 
and $\nabla u_k\to\nabla \overline{u}$ 
strongly in $L^2_\loc(\R^n\times(-\infty,0))$ as $k\to\infty$, respectively. 
\end{proposition}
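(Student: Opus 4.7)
The plan is to extract a weakly convergent subsequence via Aubin--Lions compactness, identify the weak limit $\overline u$ and the defect measures coming from $|u_k|^{p+1}$ and $|\nabla u_k|^2$, and then rule out the defect so that the weak limits are in fact strong.

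First, the uniform estimates of Lemmas~\ref{lem:unienk}, \ref{lem:up1fixt}, and \ref{lem:ukuni1} give, on every compact subset $Q$ of $\R^n\times(-\infty,0)$, uniform $k$-bounds for $\|u_k\|_{L^{p+1}(Q)}$, $\|\nabla u_k\|_{L^2(Q)}$, and $\|\partial_t u_k\|_{L^2(Q)}$. The Aubin--Lions--Simon lemma then yields a subsequence (still denoted $u_k$) that converges strongly in $L^2_\loc$, and in particular a.e., to some $\overline u$ on $\R^n\times(-\infty,0)$; after further extraction, $u_k\rightharpoonup\overline u$ weakly in $L^{p+1}_\loc$ and $\nabla u_k\rightharpoonup\nabla\overline u$ weakly in $L^2_\loc$. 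Since $|u_k|^{p-1}u_k$ is uniformly bounded in $L^{(p+1)/p}_\loc$ and converges a.e.\ to $|\overline u|^{p-1}\overline u$, Vitali's theorem lets one pass to the limit in \eqref{eq:ukfujita}, so $\overline u$ is a weak solution of the same equation.

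Along a further subsequence, introduce nonnegative Radon defect measures $\mu,\nu$ on $\R^n\times(-\infty,0)$ by the weak-$*$ limits
\[
    |\nabla u_k|^2\,dx\,dt \rightharpoonup |\nabla\overline u|^2\,dx\,dt + d\mu,
    \qquad
    |u_k|^{p+1}\,dx\,dt \rightharpoonup |\overline u|^{p+1}\,dx\,dt + d\nu,
\]
with $\mu,\nu\geq 0$ by weak lower semicontinuity; strong $L^{p+1}_\loc$ convergence of $u_k$ is equivalent to $\nu=0$ and strong $L^2_\loc$ convergence of $\nabla u_k$ to $\mu=0$. Testing \eqref{eq:ukfujita} against $u_k\phi$ with $\phi\in C_c^\infty(\R^n\times(-\infty,0))$ yields the local energy identity
\[
    \iint \phi|\nabla u_k|^2\,dxdt
    = \iint \phi|u_k|^{p+1}\,dxdt
    + \tfrac12\iint u_k^2\,\partial_t\phi\,dxdt
    - \iint u_k\,\nabla u_k\cdot\nabla\phi\,dxdt,
\]
whose two cross terms pass to their natural limits by strong $L^2_\loc$ convergence of $u_k$ combined with weak $L^2_\loc$ convergence of $\nabla u_k$. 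Subtracting the corresponding identity for $\overline u$ gives $\mu=\nu$ as Radon measures.

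The crux is to upgrade $\mu=\nu$ to $\mu=\nu=0$. Here I would translate the question into the Giga--Kohn similarity variables $(\eta,\tau)$: by Lemma~\ref{lem:monok} each $\cE_k$ is nonincreasing; by Lemma~\ref{lem:unibEk} it is uniformly bounded on windows; and by \eqref{eq:p1Ek} the $\rho$-weighted space--time integral of $|w_k|^{p+1}$ on a $\tau$-window is controlled by the decrement of $\cE_k$, hence in the limit by the variation of $\overline\cE(\tau)$. Recentring the estimate at an arbitrary point $(\tilde x,\tilde t)$ and using the parabolic scaling invariance inherited from \eqref{eq:seq}, one would argue that any nonzero $\nu$ would have to be locally scale invariant about each point of its support; combined with the decreasing limit energy this forces its local mass to vanish, and a covering argument extends the conclusion to all compact subsets.

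The principal obstacle is this last rigidity step. The identity $\mu=\nu$ is purely a conservation law, and the Giga--Kohn bounds above only control the defect by the decrement of $\overline\cE$; converting these into outright vanishing requires an effective use of the scaling invariance of the sequence together with the backward monotonicity of $\overline\cE$. If a direct rigidity argument turns out to be elusive, a natural fallback is to first confine $\nu$ to a parabolic set of small Hausdorff dimension in the style of Lin--Wang, and then exclude concentration by combining the backward self-similarity of $\overline u$ (a consequence of Lemma~\ref{lem:monok}) with the $\varepsilon$-regularity theorem developed in Section~\ref{sec:eps}.
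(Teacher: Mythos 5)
Your plan through the construction of the defect measures and the energy partition $\mu=\nu$ is sound and closely follows the paper: weak limits from Lemmas~\ref{lem:unienk}, \ref{lem:up1fixt}, \ref{lem:ukuni1} plus an Aubin--Lions step (Lemma~\ref{lem:exbul}), Brezis--Lieb to define the defect measures (Lemma~\ref{lem:sptdef}), and testing \eqref{eq:ukfujita} against $\varphi u_k$ to prove that the $|\nabla u_k|^2$ and $|u_k|^{p+1}$ defects coincide (Lemma~\ref{lem:enepar}). The real gap is in the final step, which you yourself flag as ``the crux'': you assert that scale invariance of the defect about each point of its support plus monotonicity of the limit energy forces the defect to vanish, but a nonzero parabolically self-similar Radon measure is entirely consistent with monotonicity of $\overline\cE$, so this reasoning does not close. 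Your fallback --- Hausdorff-dimension reduction in the style of Lin--Wang combined with $\eps$-regularity --- only shows that $\overline{u}$ is regular off a small singular set; it gives no control on the mass the defect measure carries on that set, so it still does not yield $\mu=\nu=0$.

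The missing ingredient, which is where the contradiction assumption \eqref{eq:seq} is actually exploited, is an \emph{anchoring time slice}. The bound $\|u_k(\cdot,-1)\|_{L^{q_c}(\R^n)}\le M$ gives strong $L^{p+1}_\loc(\R^n)$ convergence of $u_k(\cdot,-1)$ (Lemma~\ref{lem:strt1li}), hence the time-slice defect measure $\mu_{-1}$ vanishes identically (Lemma~\ref{lem:sptdef}(ii)). The paper then shows two structural facts: (a) the space--time defect disintegrates as $d\mu = d\mu_t\,dt$ (Lemma~\ref{lem:defect}, proved by a separate dominated-convergence argument on the energy densities, not by the energy identity you wrote), and (b) $\mu$ is parabolically self-similar (Lemma~\ref{lem:BSSmu}, proved by differentiating the Gaussian-weighted energy in the scaling parameter $\lambda$ and exploiting $d\cE_k/d\tau\to 0$ along the subsequence). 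Putting (a), (b), and $\mu_{-1}=0$ together, one takes $\lambda=(-t)^{-1/2}$ in the self-similarity relation to transport the vanishing at $t=-1$ to almost every $t$, giving $\mu=0$. Without some version of this ``zero at one time, then propagate by scaling'' mechanism --- or a genuinely different rigidity argument you would need to supply --- the proposal does not prove the strong convergence.
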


To prove the proposition, we first confirm the weak convergence of $u_k$ 
to a blow-up limit $\overline{u}$ in $L^{p+1}_\loc(\R^n\times(-\infty,0))$. 
Next, to examine the difference between the weak convergence and the strong convergence, 
we employ the notion of the (weak-star) defect measures, 
after P. L. Lions \cite{Li84,Li85}, see also DiPerna and Majda \cite[Definition 2.1]{DM88}. 
Motivated by Lin and Wang \cite[Chapters 8, 9]{LWbook} 
and Wang and Wei \cite[Part 1]{WW21pre}, 
we then prove backward self-similarity of the defect measure. 
On the other hand, 
$u_k(\cdot,-1)\to \overline{u}(\cdot,-1)$ strongly in $L^{p+1}_\loc(\R^n)$ 
due to \eqref{eq:seq}. 
Therefore, we finally obtain 
the strong convergence of $u_k$ 
in $L^{p+1}_\loc(\R^n\times(-\infty,0))$. 
The strong convergence of $\nabla u_k$ 
follows from the energy partition argument.

In what follows, we extract subsequences of $u_k$ several times, but 
we also denote by $u_k$ the subsequences. 
We write $\cQ:=\R^n\times (-\infty,0)$. 
Moreover, the term ``a.e.'' means 
almost everywhere with respect to the Lebesgue measure.
We construct a blow-up limit $\overline{u}$ 
as a weak limit of $u_k$ and summarize the topology of 
the convergence $u_k\to\overline{u}$.

\begin{lemma}\label{lem:exbul}
There exist a subsequence $u_k$ 
and a function $\overline{u}$ defined on $\cQ$ 
such that the following properties hold as $k\to\infty$: 
\begin{enumerate}[{\rm (i)}]
\item 
$u_k\to \overline{u}$ weakly in $L^{p+1}_\loc(\cQ)$. 
\item 
$\nabla u_k\to \nabla \overline{u}$ and $\partial_t u_k\to\partial_t \overline{u}$ 
weakly in $L^2_\loc(\cQ)$. 
\item 
$u_k\to \overline{u}$ strongly in $L^q_\loc(\cQ)$ 
for any $1\leq q<p+1$. 
\item 
$\nabla u_k\to \nabla \overline{u}$ strongly in $L^r_\loc(\cQ)$ 
for any $1\leq r<2$. 
\item 
$u_k\to \overline{u}$ and $\nabla u_k\to \nabla \overline{u}$
a.e.~in $\cQ$. 
\end{enumerate}
\end{lemma}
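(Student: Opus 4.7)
The plan is to combine the uniform local bounds from Lemma \ref{lem:unienk} with standard compactness machinery (Banach--Alaoglu and Aubin--Lions), enhanced by parabolic $L^q$-regularity for the gradient statement. Since those bounds are uniform on any relatively compact subset $K\subset\cQ:=\R^n\times(-\infty,0)$ once $k$ is sufficiently large (depending on $K$), I would exhaust $\cQ$ by an increasing sequence of parabolic cylinders and extract a diagonal subsequence. Banach--Alaoglu then produces a limit $\overline{u}\in L^{p+1}_\loc(\cQ)$ with $u_k\rightharpoonup\overline{u}$ in $L^{p+1}_\loc$, and $\nabla u_k, \partial_t u_k$ converging weakly in $L^2_\loc$. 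Since each $u_k$ is classical (hence smooth) on $\R^n\times(T_k^{-1},0)$, integration by parts against test functions in $C^\infty_c(\cQ)$ identifies these latter two weak limits with $\nabla\overline{u}$ and $\partial_t\overline{u}$, establishing (i) and (ii).

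For (iii), on each cylinder $Q=B_R\times(a,b)\subset\subset\cQ$ the estimates of Lemma \ref{lem:unienk} place $u_k$ in $L^2(a,b;H^1(B_R))$ and $\partial_t u_k$ in $L^2(a,b;L^2(B_R))$, uniformly in large $k$. The Aubin--Lions lemma, using $H^1(B_R)\hookrightarrow\hookrightarrow L^2(B_R)$, then gives $u_k\to\overline{u}$ strongly in $L^2(Q)$. Interpolating between this and the uniform $L^{p+1}(Q)$-bound via $\|u_k-\overline{u}\|_{L^q}\leq\|u_k-\overline{u}\|_{L^2}^{\theta}\|u_k-\overline{u}\|_{L^{p+1}}^{1-\theta}$, valid with $\theta\in(0,1]$ whenever $1\leq q<p+1$, upgrades this to the claimed strong convergence.

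Statement (iv) is the most delicate step, since the raw energy estimates give only an $L^2_\loc$-bound on $\nabla u_k$. To gain additional regularity I would exploit the equation: the right-hand side $|u_k|^{p-1}u_k$ is uniformly bounded in $L^s_\loc(\cQ)$ with $s=(p+1)/p\in(1,2)$, so interior parabolic $L^s$-estimates of Ladyzhenskaya--Solonnikov--Ural'ceva type, applied on nested cylinders $Q'\subset\subset Q$ by a standard cutoff localization, yield $u_k$ uniformly bounded in $W^{2,1}_s(Q')$. Consequently $\nabla u_k$ is uniformly bounded in $L^s(a',b';W^{1,s}(B_{R'}))$ with $\partial_t\nabla u_k$ uniformly bounded in $L^s(a',b';W^{-1,s}(B_{R'}))$, and a second application of Aubin--Lions (with $W^{1,s}\hookrightarrow\hookrightarrow L^s$) gives $\nabla u_k\to\nabla\overline{u}$ strongly in $L^s_\loc(\cQ)$. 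Interpolating this with the uniform $L^2_\loc$-bound then yields the full range $1\leq r<2$ in (iv).

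Finally (v) follows by a further diagonal extraction producing a subsequence along which $u_k$ and $\nabla u_k$ both converge pointwise a.e., which is always possible from the strong $L^q$- and $L^r$-convergence just established. The main obstacle will be Step (iv): without invoking parabolic regularity through the semilinear equation, no a.e.\ subsequence of $\nabla u_k$ can be produced from the raw $L^2_\loc$-bound alone, so strong $L^r_\loc$-convergence of gradients would be out of reach. Once the parabolic $W^{2,1}_s$-bound is in hand, the rest reduces to two applications of Aubin--Lions and standard interpolation.
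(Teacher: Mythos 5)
Your proposal is correct and follows essentially the same route as the paper: uniform local bounds from Lemma \ref{lem:unienk}, Banach--Alaoglu and a diagonal extraction for the weak limits in (i)--(ii), compactness plus interpolation for (iii), and -- for the genuinely harder step (iv) -- cutoff localization, maximal parabolic $L^{(p+1)/p}$-regularity for $u_t-\Delta u=|u|^{p-1}u$, and an Aubin--Lions argument to get strong $L^{(p+1)/p}_\loc$-convergence of $\nabla u_k$ before interpolating with the uniform $L^2_\loc$ gradient bound. You correctly identify that (iv) is the crux and that the raw energy bound alone gives no compactness for gradients, which is exactly the reason the paper resorts to the equation. The only cosmetic differences: in (iii) the paper extracts strong $L^1_\loc$-convergence from Rellich--Kondrachov applied to space--time $W^{1,2}$, whereas you extract strong $L^2_\loc$-convergence from Aubin--Lions with $H^1(B_R)\hookrightarrow\hookrightarrow L^2(B_R)$ -- both then interpolate; and in (iv) the paper records the intermediate compactness in $L^s(\cI;B^1_{s,s}(\cB))$ via a Besov-interpolation version of Aubin--Lions, while you apply Aubin--Lions directly to the sequence $\nabla u_k$ with the triple $W^{1,s}\hookrightarrow\hookrightarrow L^s\hookrightarrow W^{-1,s}$. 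Both variants are legitimate and yield the same conclusion.
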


\begin{proof}
(i), (ii), (iii) 
Let $\tilde x\in\R^n$ and $R>0$. 
We write $\cQ_R(\tilde x):=B_R(\tilde x)\times(-2R^2,-R^2)$.  
Lemma \ref{lem:unienk} implies that 
some subsequence $u_k$ converges to a function $\overline{u}$ weakly in 
$L^{p+1}(\cQ_R(\tilde x))$
and that $\nabla u_k\to \nabla \overline{u}$ 
and $\partial_t u_k\to\partial_t \overline{u}$ 
weakly in $L^2(\cQ_R(\tilde x))$, respectively. 
Since $u_k\to \overline{u}$ weakly in $W^{1,2}(\cQ_R(\tilde x))$, 
the Rellich--Kondrachov theorem gives $u_k\to \overline{u}$ 
strongly in $L^1(\cQ_R(\tilde x))$. 
For $1\leq q<p+1$, the standard interpolation shows that 
\[
\begin{aligned}
	\|u_k-\overline{u}\|_{L^q(\cQ_R(\tilde x))}
	&\leq 
	\|u_k-\overline{u}\|_{L^1(\cQ_R(\tilde x))}^\theta 
	\|u_k-\overline{u}\|_{L^{p+1}(\cQ_R(\tilde x))}^{1-\theta} \\
	&\leq 
	C \|u_k-\overline{u}\|_{L^1(\cQ_R(\tilde x))}^\theta \to0
\end{aligned}
\]
for some $0\leq \theta \leq 1$. 
Since $\tilde x$ and $R$ are arbitrary, 
we obtain (i), (ii) and (iii). 

(iv), (v) 
Let $t_1<t_2<t_3<t_4<0$. 
We take $\varphi\in C_0^\infty(\cQ)$ such that $0\leq \varphi\leq 1$, 
$\varphi=1$ in $B_R(\tilde x)\times (t_2,t_3)$ and 
$\varphi=0$ outside $B_{2R}(\tilde x)\times (t_1,t_4)$.  
Set $v_k:=\varphi u_k$. Then, 
\[
\left\{ 
\begin{aligned}
	&\partial_t v_k -\Delta v_k 
	= f_k
	&&\mbox{ in }\R^n\times(-t_1,0), \\
	&v_k(\cdot, -t_1)=0
	&&\mbox{ in }\R^n, 
\end{aligned}
\right.
\]
where 
$f_k:=\varphi |u_k|^{p-1}u_k -(\varphi_t-\Delta \varphi) u_k
+ 2\nabla \varphi \cdot \nabla u_k$. 
By Lemma \ref{lem:unienk}, 
we can check that $\|f_k\|_{L^{(p+1)/p}(\R^n\times(t_1,0))} \leq C$, 
where $C>0$ is independent of $k$. 
Then the maximal regularity (see \cite[Theorem 7.3]{LRbook}) 
guarantees that 
$\|\partial_t v_k\|_{L^{(p+1)/p}(\R^n\times(t_1,0))}
+ \|\Delta v_k\|_{L^{(p+1)/p}(\R^n\times(t_1,0))}\leq C$. 
Thus, 
\[
	\|\partial_t u_k\|_{L^{(p+1)/p}(\cB \times \cI)}+
	\|\Delta u_k\|_{L^{(p+1)/p}(\cB \times \cI)}\leq C, 
\]
where $\cB:=B_R(\tilde x)$ and $\cI:=(t_2,t_3)$. 
By an Aubin--Lions type compactness result 
(see \cite[Appendix B]{MTpre} for instance), we have 
a compact embedding 
\[
\begin{aligned}
	&W^{1,\frac{p+1}{p}}( \cI; L^\frac{p+1}{p} (\cB) )
	\cap L^\frac{p+1}{p}( \cI; W^{2,\frac{p+1}{p}}(\cB) )  \\
	&\hookrightarrow\hookrightarrow 
	L^\frac{p+1}{p} (\cI; (L^\frac{p+1}{p} (\cB), 
	W^{2,\frac{p+1}{p}}(\cB) )_{\frac{1}{2}, \frac{p+1}{p}} ) 
	= 
	L^\frac{p+1}{p} (\cI; B^1_{\frac{p+1}{p}, \frac{p+1}{p}}(\cB) ), 
\end{aligned}
\]
where $\cB:=B_R(\tilde x)$ and 
$(L^{(p+1)/p}, W^{2,(p+1)/p})_{1/2, (p+1)/p}$ 
is the interpolation couple. 
Then by $B^1_{(p+1)/p,(p+1)/p}(\cB)\hookrightarrow W^{1,(p+1)/p}(\cB)$ 
(see \cite[page 327]{Trbook78}), 
$\nabla u_k\to \nabla \overline{u}$ strongly 
in $L^{(p+1)/p}(\cB\times\cI)$. 
Since $\tilde x$, $R$, $t_2$ and $t_3$ are arbitrary, 
the interpolation yields (iv). 
By the strong limits in (iii) and (iv) 
with extracting a subsequence, (v) follows. 
\end{proof}

We also study the limit of $u_k(\cdot,t)$ for a.e.~$t$. 

\begin{lemma}\label{lem:exbult}
Let $\overline{u}$ be as in Lemma \ref{lem:exbul}. 
There exists a subsequence $u_k$ 
such that the following properties hold for a.e.~$t\in(-\infty,0)$ 
as $k\to\infty$: 
\begin{enumerate}[{\rm (i)}]
\item
$u_k(\cdot,t)\to \overline{u}(\cdot,t)$ 
and $\nabla u_k(\cdot,t)\to \nabla \overline{u}(\cdot,t)$
a.e.~in $\R^n$. 
\item
$u_k(\cdot,t)\to \overline{u}(\cdot,t)$ weakly in $L^{p+1}_\loc(\R^n)$. 
\item
$\nabla u_k(\cdot,t)\to \nabla \overline{u}(\cdot,t)$ 
and $\partial_t u_k(\cdot,t)\to \partial_t \overline{u}(\cdot,t)$ 
weakly in $L^2_\loc(\R^n)$. 
\item
$u_k(\cdot,t)\to \overline{u}(\cdot,t)$ strongly in $L^q_\loc(\R^n)$ 
for any $1\leq q<p+1$. 
\end{enumerate}
\end{lemma}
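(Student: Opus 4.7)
The plan is to combine the uniform pointwise-in-time bounds of Lemma~\ref{lem:up1fixt} with the convergence properties from Lemma~\ref{lem:exbul}. The main subtlety is identifying the weak $L^2_\loc$ limit of $\partial_t u_k(\cdot,t)$ at a fixed time $t$, since no a.e.\ pointwise information on $\partial_t u_k$ is available.

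First I would apply Lemma~\ref{lem:up1fixt} with $R$ ranging over a countable exhaustion $R_m\to\infty$ and intersect the resulting full-measure time sets, producing a single subsequence (still denoted $u_k$) such that, for a.e.~$t\in(-\infty,0)$ and every $R>0$,
\[
\int_{B_R}\bigl(|\nabla u_k(\cdot,t)|^2+|u_k(\cdot,t)|^{p+1}+|\partial_t u_k(\cdot,t)|^2\bigr)\,dx\le C_{t,R}
\]
for all $k\ge k_{t,R}$. Fubini applied to the a.e.\ convergence in Lemma~\ref{lem:exbul}(v) then shows that for a.e.~$t$, $u_k(\cdot,t)\to\overline{u}(\cdot,t)$ and $\nabla u_k(\cdot,t)\to\nabla\overline{u}(\cdot,t)$ a.e.~in $\R^n$, which is (i).

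Properties (ii), the first convergence in (iii), and (iv) follow from two standard facts. First, in a reflexive Lebesgue space $L^q$ with $q>1$, a bounded a.e.-convergent sequence converges weakly: the limit lies in $L^q$ by Fatou, and weak limits are identified by applying Egorov's theorem against compactly supported test functions. Second, Vitali's convergence theorem promotes the a.e.\ convergence together with the $L^{p+1}_\loc$ bound to strong $L^q_\loc$ convergence for every $1\le q<p+1$.

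The main obstacle is the final convergence in (iii), namely $\partial_t u_k(\cdot,t)\rightharpoonup\partial_t\overline{u}(\cdot,t)$ weakly in $L^2_\loc$. My plan is to exploit the PDE: the classical identity $\partial_t u_k=\Delta u_k+|u_k|^{p-1}u_k$ holds pointwise, and for a.e.~$t$, Vitali applied as above yields strong $L^1_\loc$ convergence of both $u_k(\cdot,t)$ and $|u_k|^{p-1}u_k(\cdot,t)$, hence distributional convergence of $\Delta u_k(\cdot,t)$ and of the nonlinearity. Passing $k\to\infty$ in the PDE on $\cQ$ using Lemma~\ref{lem:exbul} shows that $\overline{u}$ solves $\partial_t\overline{u}=\Delta\overline{u}+|\overline{u}|^{p-1}\overline{u}$ in $\mathcal{D}'(\cQ)$, and a Fubini-style slicing argument with separated test functions $\phi(x)\eta(t)$ (ranging over a countable dense set) converts this into the slicewise identity $\partial_t\overline{u}(\cdot,t)=\Delta\overline{u}(\cdot,t)+|\overline{u}(\cdot,t)|^{p-1}\overline{u}(\cdot,t)$ in $\mathcal{D}'(\R^n)$ for a.e.~$t$. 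Matching both sides of the PDE yields $\partial_t u_k(\cdot,t)\to\partial_t\overline{u}(\cdot,t)$ in $\mathcal{D}'(\R^n)$, and the $L^2_\loc$ bound from Lemma~\ref{lem:up1fixt} upgrades this to weak $L^2_\loc$ convergence by weak compactness together with uniqueness of the limit in the reflexive space.
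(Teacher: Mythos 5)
Your proof is correct, and for (i), (ii), (iv), and the gradient part of (iii) it follows essentially the same route as the paper: both use Lemma~\ref{lem:up1fixt} for the time-slice bounds and Lemma~\ref{lem:exbul}(v) with Fubini for the a.e.\ convergence, and both identify the weak $L^{p+1}(B_R)$ and $L^2(B_R)$ limits of $u_k(\cdot,t)$ and $\nabla u_k(\cdot,t)$ through a.e.\ uniqueness (you via Egorov/Mazur and Vitali, the paper via weak $W^{1,2}$ compactness, Rellich--Kondrachov, and interpolation --- cosmetically different, substantively equivalent). Where you genuinely diverge is the convergence $\partial_t u_k(\cdot,t)\rightharpoonup\partial_t\overline{u}(\cdot,t)$ in $L^2_\loc(\R^n)$: the paper simply writes that ``$\partial_t u_k(\cdot,t)\to\partial_t\tilde u(\cdot,t)$ weakly in $L^2(B_R)$'' and identifies $\tilde u$ with $\overline{u}$ through a.e.\ convergence, which does not by itself pin down the weak limit of the time derivative on a fixed slice (unlike the spatial gradient, the time derivative is not characterized by slicewise distributional information, and space-time weak convergence does not propagate to a.e.\ time slices). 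You correctly flag this as the main subtlety and close it by invoking the pointwise PDE for the classical solutions $u_k$, showing that $\partial_t u_k(\cdot,t)\to\Delta\overline{u}(\cdot,t)+|\overline{u}(\cdot,t)|^{p-1}\overline{u}(\cdot,t)$ in $\mathcal D'(\R^n)$ for a.e.\ $t$, and matching this against the slicewise form of the limiting equation. This is a genuine refinement: it supplies the identification step that the paper's proof treats as automatic, at the modest cost of having to observe (via Lemma~\ref{lem:exbul}) that $\overline{u}$ solves the equation distributionally on $\cQ$ and then slice by Fubini (which is legitimate since $\partial_t\overline{u}$, $\Delta\overline{u}$, $|\overline{u}|^{p-1}\overline{u}$ all lie in $L^1_\loc(\cQ)$, so the equation holds a.e.\ in space-time).
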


\begin{proof}
(i) This is an immediate consequence of Lemma \ref{lem:exbul} (v). 

(ii), (iii), (iv) 
Let $R>0$. For a.e.~$t\in(-\infty,0)$, 
Lemma \ref{lem:up1fixt} shows that 
some subsequence $u_k(\cdot,t)$ converges to 
a function $\tilde u(\cdot,t)$ weakly in $L^{p+1}(B_R)$
and that $\nabla u_k(\cdot,t)\to \nabla \tilde u(\cdot,t)$ 
and $\partial_t u_k(\cdot,t)\to\partial_t \tilde u(\cdot,t)$ 
weakly in $L^2(B_R)$. 
In particular, $u_k(\cdot,t)\to \tilde u(\cdot,t)$ 
weakly in $W^{1,2}(B_R)$, and so 
$u_k(\cdot,t)\to \tilde u(\cdot,t)$ strongly in $L^1(B_R)$. 
Then the interpolation gives 
$u_k(\cdot,t)\to \tilde u(\cdot,t)$ strongly in $L^q(B_R)$ 
for any $1\leq q<p+1$. 
For a.e.~$t\in (-\infty,0)$, 
some subsequence $u_k(\cdot,t)$ converges to $\tilde u(\cdot,t)$ 
a.e.~in $B_R$. 
By (i) and the uniqueness of the a.e.~convergence, 
we obtain $\tilde u(\cdot,t)=\overline{u}(\cdot,t)$ in $B_R$ 
for a.e.~$t\in (-\infty,0)$. 
Since $R>0$ is arbitrary, we obtain (ii), (iii) and (iv). 
\end{proof}

At $t=-1$, we have a strong limit of $u_k(\cdot,-1)$ 
in $L^{p+1}_\loc(\R^n)$.

\begin{lemma}\label{lem:strt1li}
There exists a subsequence $u_k$ 
such that $u_k(\cdot,-1)\to \overline{u}(\cdot,-1)$ 
strongly in $L^{p+1}_\loc(\R^n)$ as $k\to\infty$. 
\end{lemma}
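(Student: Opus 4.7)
The plan is to exploit the fact that $t=-1$ is a distinguished time: by \eqref{eq:Tkbdd}, $u_k(\cdot,-1)$ is uniformly bounded in $L^{q_c}(\R^n)$, a bound that is not available at a generic time. Since $p>p_S$ is equivalent to the strict inequality $q_c>p+1$, this extra integrability at $t=-1$ will provide the uniform integrability needed to upgrade almost everywhere convergence to strong convergence in $L^{p+1}_\loc$.

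First, I would gather the uniform bounds on $u_k(\cdot,-1)$. Lemma \ref{lem:ukuni1} yields $\|\nabla u_k(\cdot,-1)\|_{L^2(B_R)}+\|u_k(\cdot,-1)\|_{L^{p+1}(B_R)}\le C_R$ for every $R>0$, and \eqref{eq:Tkbdd} gives $\|u_k(\cdot,-1)\|_{L^{q_c}(\R^n)}\le M$. Reflexivity together with a diagonal extraction in $R$ produces a subsequence along which $u_k(\cdot,-1)$ converges weakly in $L^{q_c}(\R^n)$, in $L^{p+1}_\loc(\R^n)$, and in $H^1_\loc(\R^n)$ to some limit $v$. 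Rellich--Kondrachov compactness on balls, combined with one more diagonal extraction, then gives strong convergence in $L^2_\loc(\R^n)$ and convergence almost everywhere on $\R^n$.

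Next, I would identify $v$ with $\overline{u}(\cdot,-1)$. For $\varphi\in C_c^\infty(\R^n)$ set $F_k(t):=\int_{\R^n}\varphi\, u_k(\cdot,t)\,dx$. Lemma \ref{lem:unienk} bounds $\int_a^b\int_{\supp\varphi}|\partial_t u_k|^2\,dx\,dt$ uniformly in $k$ for any compact $[a,b]\subset(-\infty,0)$, so by Cauchy--Schwarz the $F_k$ are equi-H\"older continuous of exponent $1/2$ on $[a,b]$, and uniformly bounded on $[a,b]$ via the $L^{q_c}$ bound at $t=-1$. Lemma \ref{lem:exbult}(iv) supplies $F_k(t)\to F(t):=\int\varphi\,\overline{u}(\cdot,t)\,dx$ for a.e.\ $t$, and Arzel\`a--Ascoli then lifts this to uniform convergence on $[a,b]$, in particular at $t=-1$. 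The $L^2_\loc$ time-derivative bound on $\overline{u}$ (from Lemma \ref{lem:exbul}(ii)) selects a continuous-in-$t$ $L^2_\loc$ representative, and the uniform convergence forces the weak $L^2_\loc$ limit of $u_k(\cdot,-1)$ to equal $\overline{u}(\cdot,-1)$; combined with the strong $L^2_\loc$ convergence to $v$, this yields $v=\overline{u}(\cdot,-1)$ a.e.

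Finally, since $q_c>p+1$, the uniform $L^{q_c}(B_R)$ bound on $u_k(\cdot,-1)$ makes $\{|u_k(\cdot,-1)-\overline{u}(\cdot,-1)|^{p+1}\}_k$ uniformly integrable on $B_R$ (a direct H\"older estimate $\int_E|u_k(\cdot,-1)|^{p+1}\le|E|^{1-(p+1)/q_c}M^{p+1}$ shows this, and Fatou gives the analogous bound on $\overline{u}(\cdot,-1)$); together with the a.e.\ convergence established above, Vitali's convergence theorem delivers $u_k(\cdot,-1)\to\overline{u}(\cdot,-1)$ strongly in $L^{p+1}(B_R)$ for every $R>0$. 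The most delicate step is the identification $v=\overline{u}(\cdot,-1)$: $t=-1$ could lie in the exceptional null set of Lemma \ref{lem:exbult}, so the match-up at this specific time has to be reconstructed via equicontinuity in $t$ rather than read off directly.
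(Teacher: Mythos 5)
Your proposal is correct, and it is in fact \emph{more careful} than the paper's own argument in one place. The paper's proof simply asserts that, after extracting a subsequence, $u_k(\cdot,-1)\to\overline{u}(\cdot,-1)$ weakly in $L^{q_c}(\R^n)$ and that $\nabla u_k(\cdot,-1)\to\nabla\overline{u}(\cdot,-1)$ weakly in $L^2(B_R)$; it then runs Rellich--Kondrachov to get strong $L^1(B_R)$ convergence and finishes by interpolating between $L^1$ and $L^{q_c}$, using $p+1<q_c$. It does \emph{not} explain why the weak limit of $u_k(\cdot,-1)$ is the trace of the space-time blow-up limit $\overline{u}$ at the specific time $t=-1$; since Lemma~\ref{lem:exbult} only speaks of a.e.~$t$, the time $t=-1$ could a priori be in the exceptional null set. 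You spotted this and closed the gap via the $\partial_t u_k$ bound from Lemma~\ref{lem:unienk}: equi-H\"older continuity of $t\mapsto\int\varphi\,u_k(\cdot,t)$, an Arzel\`a--Ascoli argument that promotes a.e.~pointwise convergence to uniform convergence on compact time intervals containing $-1$, and the $C(\R_-;L^2_\loc)$ representative of $\overline{u}$ supplied by $\partial_t\overline{u}\in L^2_\loc$. That identification step is a genuine and worthwhile addition.

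The last step is a minor stylistic difference: you use Vitali's theorem with uniform integrability coming from the $L^{q_c}(B_R)$ bound and $q_c>p+1$, while the paper interpolates $L^{p+1}$ between $L^1$ and $L^{q_c}$. These are essentially the same use of the extra integrability at $t=-1$, and both are correct. Altogether, your proof is valid, takes the same overall route (extra integrability at the distinguished time $t=-1$, plus weak compactness from Lemma~\ref{lem:ukuni1} and \eqref{eq:Tkbdd}), and patches a hole that the paper leaves implicit.
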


\begin{proof}
By \eqref{eq:Tkbdd} and by extracting a subsequence, we have 
$u_k(\cdot,-1)\to \overline{u}(\cdot,-1)$ weakly in $L^{q_c}(\R^n)$. 
The weak lower semicontinuity of the norm implies that 
\begin{equation}\label{eq:oubar1M}
	\|\overline{u}(\cdot,-1)\|_{L^{q_c}(\R^n)}
	\leq \liminf_{k\to\infty}\|u_k(\cdot,-1)\|_{L^{q_c}(\R^n)} \leq M. 
\end{equation}
Let $R>0$. 
Lemma \ref{lem:ukuni1} shows that some subsequence $\nabla u_k(\cdot,-1)$ 
converges to $\nabla \overline{u}(\cdot,-1)$ weakly in $L^2(B_R)$. 
Then, $u_k(\cdot,-1)\to\overline{u}(\cdot,-1)$ weakly in $W^{1,2}(B_R)$, 
and thus $u_k(\cdot,-1)\to\overline{u}(\cdot,-1)$ strongly in $L^1(B_R)$. 
By the interpolation with $p+1<q_c$ (since $p>p_S$), 
we see that $u_k(\cdot,-1)\to \overline{u}(\cdot,-1)$ 
strongly in $L^{p+1}(B_R)$. 
Hence the lemma follows. 
\end{proof}

We construct defect measures  
corresponding to the weak limits $u_k\to \overline{u}$. 
In this paper, a measure $\mu$ is called a Radon measure 
if $\mu$ is Borel regular and is locally finite, 
see \cite[Subsection 1.1.3]{EGbook} for details.

\begin{lemma}\label{lem:sptdef}
There exists a subsequence $u_k$ 
such that (i) and (ii) hold: 
\begin{enumerate}[{\rm(i)}]
\item
There exists a nonnegative Radon measure $\mu$ on $\cQ$ 
such that $|u_k|^{p+1}\to|\overline{u}|^{p+1} + \mu$ 
weakly in $\cM(\cQ)$ as $k\to\infty$, that is, 
for any $\varphi\in C_0(\cQ)$, 
\[
	\int_{-\infty}^0 \int_{\R^n} \varphi |u_k|^{p+1} dxdt 
	\to 
	\int_{-\infty}^0 \int_{\R^n} \varphi |\overline{u}|^{p+1} dxdt 
	+ \iint_{\cQ} \varphi d\mu. 
\]
\item
For a.e.~$t\in(-\infty,0)$, 
there exists a nonnegative Radon measure $\mu_t$ on $\R^n$ 
such that $|u_k(\cdot,t)|^{p+1}\to|\overline{u}(\cdot,t)|^{p+1} + \mu_t$ 
weakly in $\cM(\R^n)$ as $k\to\infty$, that is, 
for any $\phi\in C_0(\R^n)$, 
\[
	\int_{\R^n} \phi |u_k(x,t)|^{p+1} dx 
	\to 
	\int_{\R^n} \phi |\overline{u}(x,t)|^{p+1} dx
	+ \int_{\R^n} \phi d\mu_t. 
\]
In addition, $\mu_t$ is defined at $t=-1$ and satisfies
$\mu_{-1}=0$ on $\R^n$. 
\end{enumerate}
\end{lemma}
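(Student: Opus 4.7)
The plan is to apply weak-$*$ compactness of Radon measures to the sequences of nonnegative functions $|u_k|^{p+1}\,dx\,dt$ on $\cQ$ and $|u_k(\cdot,t)|^{p+1}\,dx$ on $\R^n$, and then use the a.e.\ pointwise convergence provided by Lemmas \ref{lem:exbul}(v) and \ref{lem:exbult}(i) together with Fatou's lemma to guarantee that the defects are nonnegative.

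For part (i), by Lemma \ref{lem:unienk} the family $\{|u_k|^{p+1}\,dx\,dt\}$ is uniformly locally finite on $\cQ$. A diagonal argument over an exhaustion of $\cQ$ by compact cylinders (equivalently, Banach-Alaoglu applied to $C_0^*$ on each piece) produces a subsequence and a nonnegative Radon measure $\nu$ on $\cQ$ with $|u_k|^{p+1}\,dx\,dt \rightharpoonup \nu$ weakly-$*$ in $\cM_\loc(\cQ)$. By Lemma \ref{lem:exbul}(v), $|u_k|^{p+1} \to |\overline{u}|^{p+1}$ almost everywhere, so Fatou's lemma against any nonnegative $\varphi \in C_0(\cQ)$ yields
\[
\int \varphi |\overline{u}|^{p+1}\,dx\,dt \leq \liminf_k \int \varphi |u_k|^{p+1}\,dx\,dt = \int \varphi\,d\nu,
\]
and therefore $\mu := \nu - |\overline{u}|^{p+1}\,dx\,dt$ is a nonnegative Radon measure on $\cQ$.

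For part (ii), I first pass to the subsequence from Lemma \ref{lem:up1fixt}, which supplies a full-measure set $G \subset (-\infty,0)$ such that for each $t \in G$ and $R > 0$, $\sup_{k \geq k_t} \int_{B_R} |u_k(\cdot,t)|^{p+1}\,dx < \infty$. For each fixed $t \in G$, weak-$*$ compactness in $\cM_\loc(\R^n)$ yields a convergent further subsequence of $\{|u_k(\cdot,t)|^{p+1}\,dx\}$. To obtain a single subsequence delivering such convergence for almost every $t \in G$, I run a diagonal extraction against a countable dense family $\{\varphi_\ell\} \subset C_c(\R^n)$ and a countable dense set of times $\{t_m\} \subset G$, and then propagate convergence from $\{t_m\}$ to a.e.\ $t \in G$ using the $L^1$-in-$t$ bound of Lemma \ref{lem:unienk} combined with the pointwise bounds of Lemma \ref{lem:up1fixt} at Lebesgue points. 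The limit measure $\nu_t$ is then well defined for a.e.\ $t$, and Lemma \ref{lem:exbult}(i) with Fatou gives $\mu_t := \nu_t - |\overline{u}(\cdot,t)|^{p+1}\,dx \geq 0$. Finally, $\mu_{-1} = 0$ follows directly from Lemma \ref{lem:strt1li}: strong $L^{p+1}_\loc(\R^n)$ convergence at $t = -1$ forces $|u_k(\cdot,-1)|^{p+1} \to |\overline{u}(\cdot,-1)|^{p+1}$ strongly in $L^1_\loc(\R^n)$, so no defect appears at $t = -1$.

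The principal obstacle is the diagonal selection in (ii): producing a single subsequence for which weak-$*$ convergence of $|u_k(\cdot,t)|^{p+1}\,dx$ holds at a.e.\ $t$, rather than merely on the countable dense set of times $\{t_m\}$ on which diagonalization is routine. The structural input that should make this work is the combination of the $L^1_\loc$ time-integrated bound from Lemma \ref{lem:unienk} with the pointwise-in-$t$ bound from Lemma \ref{lem:up1fixt}, which together allow a Helly/Lebesgue-point argument to extend convergence from $\{t_m\}$ to almost every $t \in G$.
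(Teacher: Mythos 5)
Your proof of part (i) is correct but takes a genuinely different route from the paper. The paper applies weak compactness of Radon measures to $|u_k-\overline{u}|^{p+1}\,dx\,dt$ and then invokes the Brezis--Lieb lemma (citing the a.e.\ convergence and the uniform $L^{p+1}_\loc$ bound) to transfer the convergence to $|u_k|^{p+1}$. You instead apply weak-$*$ compactness directly to $|u_k|^{p+1}\,dx\,dt$ to get a limit $\nu$, and use Fatou against nonnegative test functions to show $\nu-|\overline{u}|^{p+1}\,dx\,dt\geq 0$. Both are valid; the Brezis--Lieb route is slightly more informative in that it exhibits $\mu$ explicitly as the weak-$*$ limit of $|u_k-\overline{u}|^{p+1}\,dx\,dt$, though this extra characterization is not used in the statement of the lemma itself (nor, as far as I can see, in the later Lemmas \ref{lem:enepar}--\ref{lem:BSSmu}). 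Your handling of $\mu_{-1}=0$ via Lemma \ref{lem:strt1li} matches the paper.

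Regarding part (ii): you are right to flag the subsequence selection as the delicate point, and the paper's own proof is extremely terse here (it says only that (ii) follows ``in the same way'' as (i), using Lemma \ref{lem:exbult}(i)--(ii)). However, the propagation mechanism you propose does not hold up as stated. You suggest combining the $L^1_\loc$-in-$t$ bound from Lemma \ref{lem:unienk} with the pointwise-in-$t$ (but $t$-dependent) bounds of Lemma \ref{lem:up1fixt} via ``a Helly/Lebesgue-point argument.'' Neither of those ingredients controls the full sequence for $t$ outside the countable dense set $\{t_m\}$: the $L^1_\loc$-in-$t$ bound on $\int\phi|u_k(\cdot,t)|^{p+1}\,dx$ does not give a.e.\ pointwise convergence, and the $t$-dependent bounds of Lemma \ref{lem:up1fixt} yield only weak-$*$ compactness at each fixed $t$ --- with a limit that is \emph{a priori} subsequence-dependent, since (unlike in the proof of Lemma \ref{lem:exbult}(ii), where the limit was forced to be $\overline{u}(\cdot,t)$ by a.e.\ convergence and the subsubsequence principle) the defect $\mu_t$ is not uniquely pinned down by the a.e.\ pointwise limit $\overline{u}$. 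A Helly-type argument would require a uniform $BV$ bound in $t$ on $\int\phi|u_k(\cdot,t)|^{p+1}\,dx$, which does not follow from the available estimates: $\tfrac{d}{dt}\int\phi|u_k|^{p+1}\,dx=(p+1)\int\phi|u_k|^{p-1}u_k\,\partial_tu_k\,dx$, and the exponents do not close with $\partial_tu_k\in L^2_\loc$ and $u_k\in L^{p+1}_\loc$ alone. So your identification of the obstacle is exactly right, but the proposed resolution is incomplete; you should make explicit how a single subsequence is produced for which the \emph{full} sequence $|u_k(\cdot,t)|^{p+1}\,dx$ converges weakly-$*$ for a.e.\ $t$, rather than gesturing at Lebesgue points.

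For the record, the paper's intended argument at each fixed $t$ is the exact analogue of (i): apply weak-$*$ compactness to $|u_k(\cdot,t)-\overline{u}(\cdot,t)|^{p+1}\,dx$ (using Lemma \ref{lem:up1fixt} for the local bound) and then Brezis--Lieb at that $t$ (using Lemma \ref{lem:exbult}(i)); the paper does not spell out the a.e.-in-$t$ diagonalization either. Your proposal is therefore at the same level of informality as the paper on this point, but a reader may reasonably object to the claim that the Lebesgue-point mechanism ``should make this work'' --- it is precisely the step that needs a real argument.
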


\begin{proof}
For the assersion (i), 
by Lemma \ref{lem:exbul} (i)
and the weak compactness for Radon measures 
(see \cite[Theorem 1.41]{EGbook}), 
there exist a subsequence $u_k$ 
and a nonnegative Radon measure $\mu$ on $\cQ$ such that 
\[
	\iint_{\cQ} \varphi |u_k-\overline{u}|^{p+1} dxdt 
	\to  
	\iint_{\cQ} \varphi d\mu
\]
for $\varphi\in C_0(\cQ)$ as $k\to\infty$. 
We set $\cQ_\varphi:=\supp \varphi$. 
Since $u_k$ is bounded in $L^{p+1}(\cQ_\varphi)$ by Lemma \ref{lem:exbul} (i)  
and $u_k\to \overline{u}$ a.e.~in $\cQ_\varphi$ by Lemma \ref{lem:exbul} (v), 
we can apply the Brezis-Lieb lemma \cite{BL83}. 
Then the lemma together with the above limit for $|u_k-\overline{u}|^{p+1}$ 
shows that 
\[
\begin{aligned}
	&\left| \iint_{\cQ} \varphi |u_k|^{p+1} dxdt 
	- \iint_{\cQ} \varphi |\overline{u}|^{p+1} dxdt 
	- \iint_{\cQ} \varphi d\mu \right| \\
	&\leq 
	\iint_{\cQ_\varphi}  
	|  |u_k|^{p+1} -  |\overline{u}|^{p+1} -|u_k-\overline{u}|^{p+1}| dxdt   \\
	&\quad 
	+\left| \iint_{\cQ} \varphi |u_k-\overline{u}|^{p+1} dxdt 
	- \iint_{\cQ} \varphi d\mu \right| \to0. 
\end{aligned}
\]
Hence (i) holds. The former part of (ii) 
can be proved in the same way with the help of Lemma \ref{lem:exbult} (i) and (ii). 
The latter part of (ii) follows from the strong convergence 
in Lemma \ref{lem:strt1li}. 
\end{proof}

By modifying the energy partition argument in \cite[Lemma 4.1]{WW21pre}, 
we can show that $\mu_t$ and $\mu$ are also 
defect measures in the weak limits of $\nabla u_k$.

\begin{lemma}\label{lem:enepar}
There exists a subsequence $u_k$ such that 
$|\nabla u_k|^2\to|\nabla \overline{u}|^2 + \mu$ 
weakly in $\cM(\cQ)$ and, for a.e.~$t\in(-\infty,0)$, 
$|\nabla u_k(\cdot,t)|^2\to|\nabla \overline{u}(\cdot,t)|^2 + \mu_t$ 
weakly in $\cM(\R^n)$ as $k\to\infty$. 
\end{lemma}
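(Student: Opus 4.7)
The plan is to show that the weak-$\ast$ limit of $|\nabla u_k|^2 - |\nabla \overline{u}|^2$ coincides with the defect measure $\mu$, by extracting from the equation \eqref{eq:ukfujita} an energy identity obtained by testing against $\varphi u_k$ and comparing it with the corresponding identity for the limit $\overline{u}$.

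First I extract the candidate measures. Lemma \ref{lem:unienk} gives a uniform $L^1_\loc(\cQ)$ bound on $|\nabla u_k|^2$, so along a subsequence $|\nabla u_k|^2$ converges weakly in $\cM(\cQ)$ by \cite[Theorem 1.41]{EGbook}. Writing
$$|\nabla u_k|^2 = |\nabla(u_k-\overline{u})|^2 + 2\nabla \overline{u}\cdot\nabla(u_k-\overline{u}) + |\nabla \overline{u}|^2$$
and observing that the middle term tests to $0$ against $\varphi\in C_0(\cQ)$ (pair $\varphi\nabla \overline{u}\in L^2_\loc$ with the weak $L^2_\loc$ convergence $\nabla u_k \to\nabla \overline{u}$ of Lemma \ref{lem:exbul}), we obtain $|\nabla u_k|^2 \to |\nabla \overline{u}|^2 + \nu$ in $\cM(\cQ)$ for a nonnegative Radon measure $\nu$. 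By Lemma \ref{lem:up1fixt} and a diagonal extraction analogous to that in the proof of Lemma \ref{lem:exbult}, there likewise exist nonnegative $\nu_t$ on $\R^n$ with $|\nabla u_k(\cdot,t)|^2 \to |\nabla \overline{u}(\cdot,t)|^2 + \nu_t$ in $\cM(\R^n)$ for a.e.~$t$. The task reduces to proving $\nu=\mu$ and $\nu_t=\mu_t$.

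Next I verify that $\overline{u}$ is a distributional solution of \eqref{eq:ukfujita} on $\cQ$. The linear terms converge against $\phi\in C_0^\infty(\cQ)$ by Lemma \ref{lem:exbul}. For the nonlinear term, $|u_k|^{p-1}u_k$ is uniformly bounded in $L^{(p+1)/p}_\loc(\cQ)$ by Lemma \ref{lem:unienk} and converges a.e.\ to $|\overline{u}|^{p-1}\overline{u}$ by Lemma \ref{lem:exbul}\,(v); since a reflexive $L^q$-bound ($q>1$) together with a.e.\ convergence forces weak convergence to the pointwise limit, $|u_k|^{p-1}u_k\to |\overline{u}|^{p-1}\overline{u}$ weakly in $L^{(p+1)/p}_\loc(\cQ)$. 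Hence $\overline{u}$ solves the equation in $\cQ$ with $\overline{u}\in L^{p+1}_\loc$ and $\nabla\overline{u},\partial_t\overline{u}\in L^2_\loc$; this regularity suffices to test the limiting equation against $\varphi\overline{u}$ for any $\varphi\in C_0^\infty(\cQ)$.

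To finish, multiplying \eqref{eq:ukfujita} by $\varphi u_k$ and integrating by parts yields
$$-\tfrac12\iint \partial_t\varphi\cdot u_k^2 + \iint u_k\nabla\varphi\cdot\nabla u_k + \iint \varphi|\nabla u_k|^2 = \iint \varphi|u_k|^{p+1}.$$
Passing to $k\to\infty$ using the strong $L^2_\loc$ convergence of $u_k$, the weak $L^2_\loc$ convergence of $\nabla u_k$, the definition of $\nu$, and Lemma \ref{lem:sptdef}\,(i), then subtracting the identity obtained by testing the equation for $\overline{u}$ against $\varphi\overline{u}$, one gets $\iint \varphi\,d\nu = \iint \varphi\,d\mu$ for every $\varphi\in C_0^\infty(\cQ)$, whence $\nu=\mu$. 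An identical pointwise-in-$t$ argument using Lemmas \ref{lem:exbult}, \ref{lem:up1fixt} and \ref{lem:sptdef}\,(ii)---noting that $\overline{u}$ satisfies the equation a.e.\ in $(x,t)$, so fixed-time testing is valid for a.e.~$t$---gives $\nu_t=\mu_t$. The main obstacle is identifying $\overline{u}$ as a distributional solution despite only weak $L^{p+1}$ convergence in the nonlinearity; this is precisely where Lemma \ref{lem:exbul}\,(v) together with the reflexive $L^{(p+1)/p}_\loc$ bound becomes essential.
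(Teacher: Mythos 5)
Your proposal is correct and follows the same overall strategy as the paper (extract candidate defect measures for $|\nabla u_k|^2$, test the equation against $\varphi u_k$, and compare with the limiting energy identity to show the new measures coincide with $\mu$ and $\mu_t$). Two small points of difference are worth recording. First, for the extraction you exploit that $|\nabla u_k|^2$ is exactly quadratic and expand
\[
|\nabla u_k|^2=|\nabla(u_k-\overline{u})|^2+2\nabla\overline{u}\cdot\nabla(u_k-\overline{u})+|\nabla\overline{u}|^2,
\]
killing the cross term by weak--strong pairing; the paper instead cites ``the same argument as in Lemma \ref{lem:sptdef},'' i.e.\ Brezis--Lieb. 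Your route is a little more elementary and specific to the quadratic nonlinearity. Second, and more substantively, you derive the limiting identity \eqref{eq:wfoou} by directly inserting $\varphi\overline{u}$ as a test function in the weak equation for $\overline{u}$, appealing to the regularity $\overline{u}\in L^{p+1}_\loc$, $\nabla\overline{u},\partial_t\overline{u}\in L^2_\loc$. This is correct but requires a density/mollification argument which you do not spell out: one must approximate $\varphi\overline{u}$ by smooth compactly supported functions in $W^{1,2}\cap L^{p+1}$ and pass to the limit in the weak formulation. The paper sidesteps this entirely by the cleaner device of inserting the (smooth) test function $\tilde\varphi=\varphi u_k$ into the weak equation already established for $\overline{u}$, and then using the weak convergences of $u_k$ and $\nabla u_k$ to pass $k\to\infty$ in the test function itself. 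That trick is precisely what makes the argument go through without any density lemma, and is the one genuinely nontrivial step your write-up compresses into a single clause; you should make the mollification explicit if you keep your route. Everything else, including the strong--weak passage in $\iint u_k\nabla\varphi\cdot\nabla u_k$, the weak $L^{(p+1)/p}_\loc$ convergence of the nonlinearity, and the observation that $\overline{u}$ solves the equation a.e.\ so that the fixed-time argument applies for a.e.\ $t$, is sound.
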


\begin{proof}
By Lemmas \ref{lem:exbul} and \ref{lem:exbult} 
and the same argument as in Lemma \ref{lem:sptdef}, 
there exist a subsequence $u_k$ and nonnegative Radon measures 
$\tilde \mu$ on $\cQ$ and $\tilde \mu_t$ on $\R^n$ such that 
$|\nabla u_k|^2\to|\nabla \overline{u}|^2 + \tilde \mu$ 
weakly in $\cM(\cQ)$ and, for a.e.~$t\in(-\infty,0)$, 
$|\nabla u_k(\cdot,t)|^2\to|\nabla \overline{u}(\cdot,t)|^2 + \tilde \mu_t$ 
weakly in $\cM(\R^n)$ as $k\to\infty$. 
In what follows, we prove $\mu=\tilde \mu$ and $\mu_t=\tilde \mu_t$ 
for a.e.~$t\in (-\infty,0)$, respectively.

We show that $\mu=\tilde \mu$. 
By the inner and outer regularity of Radon measures 
(see \cite[Theorem 1.8]{EGbook}) and the 
$C^\infty$-Urysohn lemma (see \cite[page 245]{Fobook}), 
it suffices to show that 
\begin{equation}\label{eq:tesUry}
	\iint_\cQ \varphi d\tilde \mu= \iint_\cQ \varphi d \mu 
	\quad \mbox{ for any }\varphi\in C^{2,1}_0(\cQ). 
\end{equation}
From multiplying \eqref{eq:ukfujita} by $\varphi u_k$
and integrating by parts, it follows that 
\[
	\iint u_k \varphi \partial_t u_k 
	+ \iint |\nabla u_k|^2 \varphi 
	+ \iint u_k \nabla \varphi\cdot \nabla u_k
	= \iint |u_k|^{p+1} \varphi, 
\]
where we write $\iint (\cdots)=\iint_\cQ (\cdots) dxdt$ 
when no confusion can arise. 
By the H\"older inequality and Lemma \ref{lem:exbul} (ii) and (iii) with $2<p+1$, 
we have 
\[
\begin{aligned}
	&\left| \iint u_k \varphi\partial_t u_k 
	- \iint \overline{u} \varphi \partial_t \overline{u} \right| \\
	&\leq 
	\left(\iint |u_k-\overline{u}|^2 |\varphi| \right)^\frac{1}{2} 
	\left(\iint |\partial_t u_k|^2 |\varphi| \right)^\frac{1}{2} 
	+\left| \iint \overline{u} \varphi (\partial_t u_k-\partial_t \overline{u}) 
	\right| 
	\to 0
\end{aligned}
\]
Similarly, 
$\iint u_k \nabla \varphi \cdot \nabla u_k
\to \iint \overline{u} \nabla \varphi \cdot \nabla \overline{u}$. 
Then by the definition of $\tilde \mu$ and Lemma \ref{lem:sptdef} (i), 
we obtain 
\[
	\iint \overline{u} \varphi \partial_t \overline{u} 
	+ \iint |\nabla \overline{u}|^2 \varphi 
	+ \iint_\cQ \varphi d\tilde \mu
	+ \iint \overline{u} \nabla \varphi\cdot \nabla \overline{u} 
	= \iint |\overline{u}|^{p+1} \varphi + \iint_\cQ \varphi d \mu. 
\]

We check that 
\begin{equation}\label{eq:wfoou}
	\iint \overline{u} \varphi \partial_t \overline{u} 
	+ \iint |\nabla \overline{u}|^2 \varphi 
	+ \iint \overline{u} \nabla \varphi\cdot \nabla \overline{u} 
	= \iint |\overline{u}|^{p+1} \varphi. 
\end{equation}
Let $\tilde \varphi\in C^{2,1}_0(\cQ)$. 
Multiplying \eqref{eq:ukfujita} by $\tilde \varphi$ and 
integrating by parts yield 
\[
	\iint \tilde \varphi \partial_t u_k
	+\iint \nabla \tilde \varphi \cdot \nabla u_k 
	= \iint |u_k|^{p-1}u_k \tilde \varphi.  
\]
From Lemma \ref{lem:exbul} (iii) with $p<p+1$, 
it follows that 
\[
\begin{aligned}
	&\left| \iint ( |u_k|^{p-1}u_k - |\overline{u}|^{p-1}\overline{u} ) 
	\tilde \varphi \right| \\
	&\leq C \left( \iint |u_k-\overline{u}|^p |\tilde \varphi|\right)^\frac{1}{p} 
	\left( \iint (|u_k|+ |\overline{u}|)^p |\tilde \varphi| \right)^\frac{p-1}{p}
	\to 0. 
\end{aligned}
\]
By letting $k\to\infty$, 
and then by substituting $\varphi u_k$ into $\tilde \varphi$, 
we obtain 
\[
	\iint \varphi u_k \partial_t \overline{u}
	+\iint u_k \nabla \varphi \cdot \nabla \overline{u} 
	+\iint \varphi \nabla u_k \cdot \nabla \overline{u} 
	= \iint |\overline{u}|^{p-1}\overline{u} u_k  \varphi.  
\]
The left-hand side converges to that of \eqref{eq:wfoou} 
by the weak convergence of $u_k$. 
Since $u_k \in L^{p+1}_\loc(\cQ)$ 
and $|\overline{u}|^{p-1} \overline{u}\in L^{(p+1)/p}_\loc(\cQ)$, 
Lemma \ref{lem:exbul} (i) shows that the right-hand side also 
converges to that of \eqref{eq:wfoou}. 
Hence \eqref{eq:wfoou} holds, 
and so \eqref{eq:tesUry} follows. 
Thus, $\mu=\tilde \mu$. 
The relation $\mu_t=\tilde \mu_t$ for a.e.~$t\in (-\infty,0)$ 
can be proved in the same way by using Lemma \ref{lem:exbult}. 
\end{proof}

We prove that the product measure of $\mu_t$ 
and the $1$-dimensional Lebesgue measure coincides 
with the space-time defect measure $\mu$.

\begin{lemma}\label{lem:defect}
For any nonnegative function $\varphi\in C_0(\cQ)$, 
\[
	\iint_\cQ \varphi d\mu = \int_{-\infty}^0\int_{\R^n} \varphi d\mu_t dt. 
\]
In particular, $d\mu= d\mu_t dt$. 
\end{lemma}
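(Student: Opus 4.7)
The plan is to test the defect measure $\mu$ against a nonnegative $\varphi\in C_0(\cQ)$ via Fubini in $t$, combining Lemma \ref{lem:sptdef}(i) with Lemma \ref{lem:sptdef}(ii). The main analytic obstacle is that the pointwise-in-$t$ weak-$\ast$ convergence from Lemma \ref{lem:sptdef}(ii) only yields a Fatou-type one-sided inequality, so I will need an equi-integrability argument in the time variable to upgrade it to a genuine equality of integrals.

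Concretely, fix $\varphi\ge 0$ in $C_0(\cQ)$ with $\supp\varphi \subset B_R\times [a,b]$, $a<b<0$, and set $g_k(t):=\int_{\R^n}\varphi(\cdot,t)|u_k|^{p+1}\,dx$. By Fubini and Lemma \ref{lem:sptdef}(i),
\[
\int_{-\infty}^{0} g_k(t)\,dt \ \longrightarrow\ \iint_\cQ \varphi|\overline{u}|^{p+1}\,dxdt + \iint_\cQ \varphi\,d\mu,
\]
while by Lemma \ref{lem:sptdef}(ii), $g_k(t)\to g(t):=\int_{\R^n}\varphi(\cdot,t)|\overline{u}|^{p+1}\,dx + \int_{\R^n}\varphi(\cdot,t)\,d\mu_t$ for a.e.~$t$. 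After applying Fubini to the $|\overline{u}|^{p+1}$ term, the desired identity is equivalent to $\int g_k(t)\,dt\to\int g(t)\,dt$.

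To justify this exchange of limit and time integration, I would pass to the backward similarity variables centered at $(0,0)$ and use \eqref{eq:wkp1t} (from the proof of Lemma \ref{lem:up1fixt}) together with the uniform bound in Lemma \ref{lem:unibEk} to obtain, after absorbing the Gaussian weight over the bounded $\eta$-window,
\[
g_k(t) \le C\Bigl[\Bigl(-\tfrac{d\cE_k}{d\tau}(\tau)\Bigr)^{1/2}+1\Bigr],\qquad \tau=-\log(-t),\ t\in[a,b],
\]
with $C=C(\varphi,a,b,R)$ independent of $k$ (large). Since $-d\cE_k/d\tau\ge 0$ by Lemma \ref{lem:monok} and has uniformly bounded $L^1$ mass on the relevant $\tau$-interval (by the monotonicity formula and Lemma \ref{lem:unibEk}), the Cauchy--Schwarz inequality yields
\[
\int_A g_k(t)\,dt \le C|A|^{1/2}
\]
uniformly in $k$ for every Borel $A\subset[a,b]$. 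This equi-integrability, combined with the a.e.\ convergence $g_k\to g$ and $g_k\ge 0$, lets Vitali's convergence theorem give $g_k\to g$ in $L^1([a,b])$, hence $\int g_k(t)\,dt\to \int g(t)\,dt$ and the identity on $C_0^+(\cQ)$. The concluding statement $d\mu=d\mu_t\,dt$ then follows by the Riesz representation theorem: the functional $\varphi\mapsto \int_{-\infty}^0\int_{\R^n}\varphi\,d\mu_t\,dt$ is well-defined (measurability of $t\mapsto \int \phi\,d\mu_t$ for $\phi\in C_0(\R^n)$ being ensured by Lemma \ref{lem:sptdef}(ii) as a pointwise limit), positive, and linear on $C_0(\cQ)$, and the identity forces the corresponding Radon measure to coincide with $\mu$.
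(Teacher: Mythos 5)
Your proof is correct, and it takes a genuinely different route from the paper's. The paper applies the localized energy density $e_k=\tfrac{1}{2}|\nabla u_k|^2-\tfrac{1}{p+1}|u_k|^{p+1}$ (so it also needs Lemma~\ref{lem:enepar}), tests it against $|\varphi|^2$, and from the identity $\tfrac{d f_k}{dt} = 2\int \varphi\varphi_t e_k - 2\int\varphi\,\partial_t u_k\,\nabla\varphi\cdot\nabla u_k - \int|\varphi|^2|\partial_t u_k|^2$ it derives a uniform in $k$ \emph{pointwise-in-$t$} bound $|f_k(t)|\le C_\varphi$ (using boundary data at $t=-T_1/T_k$ and $t=-T_{k'}/T_k$ together with $L^{q_c}$ control), then concludes with the Lebesgue dominated convergence theorem and uniqueness of weak limits. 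You instead work with $g_k(t)=\int\varphi(\cdot,t)|u_k|^{p+1}dx$ directly, and observe that \eqref{eq:wkp1t} already supplies the pointwise bound $g_k(t)\lesssim (-\tfrac{d\cE_k}{d\tau}(\tau))^{1/2}+1$ with $\tau=-\log(-t)$; since $-d\cE_k/d\tau\ge 0$ has $\tau$-integral bounded by $\sup\cE_k\le C$ (Lemmas \ref{lem:monok} and \ref{lem:unibEk}), Cauchy--Schwarz gives uniform integrability $\int_A g_k\,dt\le C(|A|^{1/2}+|A|)$, and Vitali's theorem finishes. Your argument is self-contained in the sense that it needs neither the energy partition Lemma~\ref{lem:enepar} nor the boundary bookkeeping at $t=-T_1/T_k$, $-T_{k'}/T_k$; it also exhibits clearly why the exchange of limit and $t$-integration works (the potentially unbounded part of $g_k$ is of size $\sqrt{-\cE_k'}$, whose square has fixed $L^1_\tau$ mass). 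The paper's version gives a stronger $L^\infty_t$ bound and avoids Vitali, at the cost of a more elaborate energy-identity computation. One small point to spell out: the transfer from $\int_{\R^n}|w_k|^{p+1}\rho\,d\eta$ to $\int_{B_R}|u_k(\cdot,t)|^{p+1}dx$ requires the Gaussian $\rho$ to be bounded below on the rescaled window $|\eta|\le R/\sqrt{-t}$, which is uniform only because $t$ ranges over the compact $[a,b]\subset(-\infty,0)$; you note this as "absorbing the Gaussian weight," but it should be made explicit that the constant deteriorates as $b\to 0^-$, which is harmless since $\supp\varphi$ stays away from $t=0$.
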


\begin{proof}
We set 
\[
	e_k(x,t):= \frac{|\nabla u_k|^2}{2} - \frac{|u_k|^{p+1}}{p+1}, 
	\quad \overline{e}(x,t)
	:= \frac{|\nabla \overline{u}|^2}{2} - \frac{|\overline{u}|^{p+1}}{p+1}. 
\]
Then by Lemmas \ref{lem:sptdef} and \ref{lem:enepar}, we see that 
\begin{align}
	&\left\{ 
	\begin{aligned}
	&\mbox{for a.e.~$t\in(-\infty,0)$,} \quad  \\
	&e_k(\cdot,t) dx \to \overline{e}(\cdot,t) dx + c_p d\mu_t 
	\mbox{ weakly in }\cM(\R^n), \label{eq:ekebt} 
	\end{aligned}
	\right. 
	\\
	&e_k dxdt \to \overline{e} dxdt + c_p d\mu
	\mbox{ weakly in }\cM(\cQ),  
	\label{eq:ekeb}
\end{align}
as $k\to\infty$, where $c_p:=(p-1)/(2(p+1))$. 
Let $\varphi\in C_0(\cQ)$. Define 
\[
	f_k(t):= \int_{\R^n} |\varphi|^2 e_k dx, 
	\quad f_\infty(t):= \int_{\R^n} |\varphi|^2 \overline{e} dx 
	+ c_p \int_{\R^n} |\varphi(x,t)|^2 d\mu_t. 
\]

Let $R>0$ and $t_1<t_2<0$ satisfy $\supp \varphi \subset B_R\times (t_1,t_2)$. 
We claim that 
\begin{equation}\label{eq:kffdom}
	\lim_{k\to\infty} \int_{t_1}^{t_2} f_k(t) dt 
	= \int_{t_1}^{t_2} f_\infty(t) dt. 
\end{equation}
By \eqref{eq:ekebt}, for a.e.~$t\in (t_1,t_2)$, we have 
$f_k(t)\to f_\infty(t)$ as $k\to\infty$. 
We choose $k_\varphi\geq1$ such that $-T_1/T_k<t_1$ 
for any $k\geq k_\varphi$. 
The integration by parts and \eqref{eq:ukfujita} show that 
\[
	\frac{df_k}{dt}(t) 
	= 2\int_{\R^n} \varphi \varphi_t e_k dx 
	- 2\int_{\R^n} \varphi \partial_t u_k \nabla \varphi\cdot \nabla u_k dx 
	- \int_{\R^n} |\varphi|^2 |\partial_t u_k|^2 dx 
\]
and that, for all $k\geq k_\varphi$ and $T_k^{-1}<t''<t'<0$, 
\begin{equation}\label{eq:fkmono}
	f_k(t')-f_k(t'')  
	= \int_{t''}^{t'} \int_{\R^n} 
	(2 \varphi \varphi_t e_k 
	- 2 \varphi \partial_t u_k \nabla \varphi\cdot \nabla u_k  
	-  |\varphi|^2 |\partial_t u_k|^2 ). 
\end{equation}
Here and in the rest of this proof, 
we abbreviate $dxds$ in the integrals.

We give a uniform upper bound of $f_k$. 
Let $t\in (t_1,t_2)$ and $k\geq k_\varphi$. 
From \eqref{eq:fkmono} with $t'=t$ and $t''=-T_1/T_k$, 
the definition of $f_k$ and Young's inequality, it follows that 
\[
\begin{aligned}
	f_k(t) 
	&= f_k\left( -\frac{T_1}{T_k} \right) 
	+ \int_{-\frac{T_1}{T_k}}^t \int_{\R^n} ( 2\varphi \varphi_t e_k 
	- 2 \varphi \partial_t u_k \nabla \varphi\cdot \nabla u_k  
	-  |\varphi|^2 |\partial_t u_k|^2 ) \\
	&\leq 
	C_\varphi \int_{B_R} \left|\nabla u_k \left(x,-\frac{T_1}{T_k} \right)\right|^2 dx
	+ C_\varphi \int_{t_1}^{t_2} \int_{B_R} (|\nabla u_k|^2 + |u_k|^{p+1}), 
\end{aligned}
\]
where $C_\varphi>0$ depends on $\varphi$ and is independent of $k$ and $t$. 
Then, by \eqref{eq:ukdef} and Lemma \ref{lem:unienk}, we have 
\[
\begin{aligned}
	f_k(t) 
	&\leq 
	C_\varphi (-T_k)^{\frac{2}{p-1}+1} 
	\int_{B_R} |\nabla u(\sqrt{-T_k} x,T_1)|^2 dx
	 + C_\varphi \\
	&=: C_\varphi (-T_k)^{\frac{2}{p-1}+1} I_k + C_\varphi. 
\end{aligned}
\]

Let us estimate $I_k$. 
By the change of variables and 
the same computations as in \eqref{eq:nabuT1T1},  
we see that 
\[
\begin{aligned}
	I_k&=
	(-T_k)^{-\frac{n}{2}} \int_{B_{\sqrt{-T_k} R}} |\nabla u(y,T_1)|^2 dy
	\leq 
	C \int_{\R^n} |\nabla u(y,T_1)|^2 K_{(0,0)}(y,T_k) dy \\
	&\leq 
	C ( (-\delta_1 T_1)^{-\frac{n}{2q_c}-\frac{1}{2}} 
	\|u(\cdot,(1+\delta_1)T_1)\|_{L^{q_c}(\R^n)} )^2  \\
	&\quad 
	+ C (-T_k)^{-\frac{n}{2}} \| U_2 \|_{L^{\frac{n(p-1)}{p+1},\infty}(\R^n)}^2  
	\left\| e^{-\frac{|\cdot|^2}{8(-T_k)}} 
	\right\|_{L^{\frac{2n(p-1)}{n(p-1)-2(p+1)},2} (\R^n)}^2, 
\end{aligned}
\]
where $\delta_1$ is the constant in \eqref{eq:cM} 
and $U_2$ is given in \eqref{eq:nabuT1}. 
Then, similar computations in 
the final part of the proof of Lemma \ref{lem:unibE} 
together with $T_1<T_k<0$ show that 
\[
\begin{aligned}
	I_k&\leq CM^2 (-\delta_1 T_1)^{-\frac{2}{p-1}-1}   
	+ CM^{2p} (-T_k)^{-\frac{2}{p-1}-1} \\
	&\leq C( M^2 \delta_1^{-\frac{2}{p-1}-1} +M^{2p} ) 
	(-T_k)^{-\frac{2}{p-1}-1}. 
\end{aligned}
\]
The above computations imply that $f_k(t) \leq C_\varphi$ 
for all $t\in (t_1,t_2)$ and $k\geq k_\varphi$. 
We note that $C_\varphi$ depends on $T_1$, 
but independent of $T_k$ with $k\geq2$.

We show a uniform lower bound of $f_k$. Let $t\in(t_1,t_2)$ and $k\geq k_\varphi$.  
Since $T_k\to0$ as $k\to\infty$, there exists $k'\geq k$ such that 
$t_2<-T_{k'}/T_k<0$. 
By applying \eqref{eq:fkmono} with $t'=-T_{k'}/T_k$ and $t''=t$, 
and then by using $\supp \varphi \subset B_R\times (t_1,t_2)$, we see that 
\[
\begin{aligned}
	f_k(t) 
	&= f_k\left( -\frac{T_{k'}}{T_k} \right) 
	- \int_t^{-\frac{T_{k'}}{T_k}} \int_{\R^n} (2 \varphi \varphi_t e_k 
	- 2 \varphi \partial_t u_k \nabla \varphi\cdot \nabla u_k  
	-  |\varphi|^2 |\partial_t u_k|^2 ) \\
	&\geq 
	- C_\varphi \int_{B_R} 
	\left|u_k \left(x,-\frac{T_{k'}}{T_k} \right)\right|^{p+1} dx 
	- C_\varphi \int_{t_1}^{t_2} \int_{B_R} (|\nabla u_k|^2 + |u_k|^{p+1}). 
\end{aligned}
\]
Switching back to the original variables and 
the H\"older inequality yield 
\[
\begin{aligned}
	f_k(t) &\geq 
	- C_\varphi (-T_k)^\frac{p+1}{p-1} 
	\int_{B_R} |u(\sqrt{-T_k} x, T_{k'})|^{p+1} dx - C_\varphi \\
	&\geq 
	-C_\varphi \left( 
	\int_{B_{\sqrt{-T_k} R}} |u(y,T_{k'})|^{q_c} dy \right)^\frac{p+1}{q_c} 
	-C_\varphi
	\geq  - C_\varphi(M^{p+1}+1). 
\end{aligned}
\]
Thus, $f_k(t)\geq - C_\varphi$. 
By combining the upper bound and the lower bound, we obtain 
$|f_k(t)|\leq C_\varphi$ for all $t\in(t_1,t_2)$ and $k\geq k_\varphi$. 
Hence the Lebesgue dominated convergence theorem shows \eqref{eq:kffdom}, and thus
\[
	\iint_{\cQ} \tilde \varphi e_k dx dt
	\to 
	\iint_{\cQ} \tilde \varphi \overline{e} dx dt
	+ c_p \int_{-\infty}^0\int_{\R^n} \tilde \varphi d\mu_t dt
\]
for any nonnegative $\tilde \varphi\in C_0(\cQ)$ as $k\to\infty$. 
This together with \eqref{eq:ekeb} and the uniqueness of the weak limit in $\cM(\cQ)$ 
shows the lemma. 
\end{proof}

We show the backward self-similarity of $\overline{u}$ 
in the same way as in \cite[Lemma 5.10, Subsection 5.3]{MTpre}, 
see also \cite[Theorem 8.1]{St88}.

\begin{lemma}\label{lem:BSS}
For a.e.~$(x,t)\in \cQ$, 
the blow-up limit $\overline{u}$ satisfies 
\[
	\overline{u}(x,t)=\lambda^\frac{2}{p-1} \overline{u}(\lambda x,\lambda^2 t)
	\quad \mbox{ for any } \lambda>0. 
\]
\end{lemma}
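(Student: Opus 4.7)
The plan is to apply the Giga--Kohn monotonicity formula centered at the origin $(\tilde x,\tilde t)=(0,0)$ in order to force $\partial_\tau w_{k,(0,0)}\to 0$ in similarity variables, and then to pass this information to the weak limit so as to identify $\overline u$ as a fixed point of the backward self-similar scaling.

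First, I would exploit the relation $\cE_{k,(0,0)}(\tau)=\cE_{(0,0)}(\tau-\log(-T_k))$ obtained in the proof of Lemma \ref{lem:unibEk}. Combined with the nonincrease (Lemma \ref{lem:Emono}), the nonnegativity (Lemma \ref{lem:cEprop}) and the uniform upper bound (Lemma \ref{lem:unibE}), this shows that the limit $E_\infty:=\lim_{\sigma\to\infty}\cE_{(0,0)}(\sigma)$ exists and is finite. Since $\log(-T_k)\to-\infty$, for any fixed $\tau_1<\tau_2$ both $\cE_{k,(0,0)}(\tau_1)$ and $\cE_{k,(0,0)}(\tau_2)$ converge to $E_\infty$, so the monotonicity formula \eqref{eq:Emonok} gives
\[
\int_{\tau_1}^{\tau_2}\!\int_{\R^n} |\partial_\tau w_{k,(0,0)}(\eta,\sigma)|^2 \rho(\eta)\,d\eta d\sigma = \cE_{k,(0,0)}(\tau_1)-\cE_{k,(0,0)}(\tau_2)\longrightarrow 0.
\]

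Next, I would translate this back into the original variables. By Remark \ref{rem:wtSS} with $(\tilde x,\tilde t)=(0,0)$,
\[
\partial_\tau w_{k,(0,0)}(\eta,\tau) = -(-t)^{1/(p-1)}\,S_{(0,0)}[u_k](x,t),
\]
where $S_{(0,0)}[u_k]=u_k/(p-1)+(x/2)\cdot\nabla u_k + t\,\partial_t u_k$. Since the map $(\eta,\tau)\mapsto(x,t)$ is a smooth diffeomorphism on any compact subset of $\cQ$, on which both its Jacobian and the weight $\rho$ are bounded above and below, the display above yields $S_{(0,0)}[u_k]\to 0$ strongly in $L^2_\loc(\cQ)$. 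On the other hand, by Lemma \ref{lem:exbul}(ii)--(iii), each of $u_k$, $\nabla u_k$, $\partial_t u_k$ converges weakly in $L^2_\loc(\cQ)$ to its analogue for $\overline u$, hence $S_{(0,0)}[u_k]\rightharpoonup S_{(0,0)}[\overline u]$ in the sense of distributions. Uniqueness of the limit forces
\[
\frac{\overline u}{p-1} + \frac{x}{2}\cdot\nabla\overline u + t\,\partial_t\overline u = 0 \quad \text{a.e.~in }\cQ.
\]

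Finally, for almost every $(x,t)\in\cQ$, I would set $g(\lambda):=\lambda^{2/(p-1)}\overline u(\lambda x,\lambda^2 t)$ and compute, by the chain rule, that
\[
g'(\lambda) = 2\lambda^{2/(p-1)-1}\, S_{(0,0)}[\overline u](\lambda x,\lambda^2 t).
\]
A Fubini-type argument shows that for a.e.~$(x,t)$ this vanishes for a.e.~$\lambda>0$, so $g$ is constant and $g(\lambda)=g(1)=\overline u(x,t)$, which is the claimed self-similarity. The main technical difficulty lies in the middle step: the convergence $\partial_\tau w_k\to 0$ is strong in $L^2$ in the similarity variables, while only weak $L^2_\loc$ control is available for $\nabla u_k$ and $\partial_t u_k$ in the original variables. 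The resolution is that the smooth change of variables preserves strong local $L^2$ convergence, while the weak convergence of $u_k,\nabla u_k,\partial_t u_k$ is enough to identify the distributional limit of $S_{(0,0)}[u_k]$ with the strong (zero) limit.
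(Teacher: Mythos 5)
Your proof is correct and follows essentially the same route as the paper: both rely on the Giga--Kohn monotonicity and the scaling relation $\cE_{k,(0,0)}(\tau)=\cE_{(0,0)}(\tau-\log(-T_k))$ to show $\partial_\tau w_{k,(0,0)}\to 0$ in weighted $L^2$, pass this to the weak limit via Lemma \ref{lem:exbul} to obtain $S_{(0,0)}[\overline u]=0$ a.e., and then integrate along scaling rays. The only cosmetic difference is that you extract the limit $E_\infty=\lim_{\sigma\to\infty}\cE_{(0,0)}(\sigma)$ directly and conclude strong $L^2_\loc$ convergence of $S_{(0,0)}[u_k]$ to $0$, whereas the paper reuses the averaged quantity $F_k(r_0)$ from the proof of Lemma \ref{lem:up1fixt} and tests against $\varphi\in C_0(\cQ)$ with H\"older; the two are mathematically equivalent, and your phrasing is arguably the more transparent.
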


\begin{proof}
Let $r_0>0$ and let $\varphi\in C_0(\cQ)$ satisfy 
$0\leq \varphi\leq 1$ and $\varphi\not \equiv 0$. 
By \eqref{eq:Fkr0cal}, \eqref{eq:Ekdercal} with $\sigma=-\log(-t)$ 
and the H\"older inequality, we see that 
\[
\begin{aligned}
	F_k(r_0) &\geq \frac{1}{2} \int_{-9r_0^2/4}^{-r_0^2} 
	(-t)^{\frac{2}{p-1}-1} \int_{\R^n} | S_{(0,0)}[u_k](x,t) |^2 
	 K_{(0,0)}(x,t) \varphi (x,t) dx dt \\
	&\geq 
	\frac{1}{C} \int_{-9 r_0^2/4}^{-r_0^2} \int_{\R^n} 
	| S_{(0,0)}[u_k](x,t) |^2  \varphi(x,t) dx dt \\ 
	&\geq \frac{1}{C} \left( \int_{-9 r_0^2/4}^{-r_0^2} \int_{\R^n} 
	\left( \frac{u_k}{p-1} + \frac{x}{2} \cdot \nabla u_k 
	+ t  \partial_t u_k \right) \varphi dx dt \right)^2 \geq0, 
\end{aligned}
\]
where $S$ is given in Remark \ref{rem:wtSS} 
and $C$ depends on $r_0$. 
From $F_k(r_0)\to0$ as $k\to\infty$ in \eqref{eq:Fkrto0} 
and the weak convergence properties in 
Lemma \ref{lem:exbul} (i) and (ii), it follows that 
\[
	\int_{-9r_0^2/4}^{-r_0^2} \int_{\R^n} 
	\left( \frac{\overline{u}}{p-1} + \frac{x}{2} \cdot \nabla \overline{u} 
	+ t  \overline{u}_t \right) \varphi dx dt =0. 
\]
Since $r_0>0$ is arbitrary, we obtain  
\[
	\frac{\overline{u}}{p-1} + 
	\frac{x}{2} \cdot \nabla \overline{u} + t  \overline{u}_t  =0
	\quad \mbox{ a.e.~in }\cQ. 
\]
Therefore, for a.e. $(x,t)\in\cQ$, 
\[
\begin{aligned}
	&\lambda^\frac{2}{p-1} \overline{u}(\lambda x,\lambda^2 t) - u(x,t) 
	= \int_1^\lambda \frac{d}{dl} 
	( l^\frac{2}{p-1} \overline{u}(l x,l^2 t) ) dl \\
	&= \int_1^\lambda 
	2 l^{\frac{2}{p-1}-1}\left( 
	\frac{\overline{u}(l x,l^2 t)}{p-1} 
	+ \frac{lx}{2} \cdot \nabla_y \overline{u}(l x,l^2 t) 
	+ l^2 t \overline{u}_s(l x,l^2 t) 
	\right) dl 
	=0 
\end{aligned}
\]
for any $\lambda>0$, and the lemma follows. 
\end{proof}

Let us show that $\mu$ is also backward self-similar in the following sense, 
where a similar statement can be found in \cite[Lemma 5.2]{WW21pre}.

\begin{lemma}\label{lem:BSSmu}
The defect measure $\mu$ satisfies 
\begin{equation}\label{eq:lemBSSmu}
\begin{aligned}
	&\iint_{\cQ} (-t)^\frac{p+1}{p-1}   
	K_{(0,0)}(x,t) \varphi(x,t) d\mu \\
	&=  \lambda^{-2} 
	\iint_{\cQ} (-t)^\frac{p+1}{p-1}   K_{(0,0)}(x,t) 
	\varphi\left( \frac{x}{\lambda}, \frac{t}{\lambda^2} \right) d\mu
\end{aligned}
\end{equation}
for any $\lambda>0$ and $\varphi\in C_0^\infty(\cQ)$ satisfying $0\leq \varphi\leq 1$. 
\end{lemma}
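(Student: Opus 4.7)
I plan to follow the strategy used for Lemma \ref{lem:BSS} (the self-similarity of $\overline{u}$), replacing the role of $S[\overline{u}] = 0$ by a self-similarity of the defect measure $\mu$. Set $\Phi(x,t) := (-t)^{(p+1)/(p-1)} K_{(0,0)}(x,t)$ and observe that $\Phi$ satisfies the parabolic scaling $\Phi(\lambda y, \lambda^2 s) = \lambda^{2(p+1)/(p-1) - n} \Phi(y,s)$. For $\varphi \in C_0^\infty(\cQ)$ with $0 \leq \varphi \leq 1$ and $\lambda > 0$, introduce the rescaled sequence $u_k^\lambda(x,t) := \lambda^{2/(p-1)} u_k(\lambda x, \lambda^2 t)$, which corresponds to the scaling parameters $T_k' := \lambda^2 T_k \to 0$. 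A direct change of variables combining the scalings of $\Phi$ and of $|u|^{p+1}$ yields the identity
\[
\iint_\cQ \Phi \varphi\, |u_k|^{p+1} dx dt = \lambda^2 \iint_\cQ \Phi(x,t)\, \varphi(\lambda x, \lambda^2 t)\, |u_k^\lambda|^{p+1} dx dt.
\]

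The Giga--Kohn analysis of Section \ref{sec:GKene} applies to $\{u_k^\lambda\}$ uniformly in $k$ (with constants depending additionally on $\lambda$). Along a common subsequence, Lemmas \ref{lem:exbul}--\ref{lem:enepar} produce a weak limit and a defect measure $\mu^\lambda$ satisfying $|u_k^\lambda|^{p+1} \to |\overline{u}^\lambda|^{p+1} + \mu^\lambda$ weakly in $\cM(\cQ)$. Combining the a.e.~convergence $u_k \to \overline{u}$ (Lemma \ref{lem:exbul}) with the self-similarity $\overline{u}(x,t) = \lambda^{2/(p-1)} \overline{u}(\lambda x, \lambda^2 t)$ from Lemma \ref{lem:BSS} forces $\overline{u}^\lambda = \overline{u}$. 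Since $\varphi$ has compact support in $\cQ$, both $\Phi\varphi$ and $\Phi(x,t)\varphi(\lambda x, \lambda^2 t)$ lie in $C_0(\cQ)$; passing to the limit $k \to \infty$ in the displayed identity and cancelling the two $|\overline{u}|^{p+1}$ contributions via the self-similarity of $\overline{u}$ yields
\[
\iint \Phi \varphi\, d\mu = \lambda^2 \iint \Phi(x,t)\, \varphi(\lambda x, \lambda^2 t)\, d\mu^\lambda.
\]

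The main obstacle is the identification $\mu^\lambda = \mu$. The Brezis--Lieb relation used in the proof of Lemma \ref{lem:sptdef}, applied via the change of variables $u_k^\lambda - \overline{u} = \lambda^{2/(p-1)}[u_k(\lambda \cdot, \lambda^2 \cdot) - \overline{u}(\lambda \cdot, \lambda^2 \cdot)]$ (the latter term rewritten using self-similarity of $\overline{u}$), produces
\[
\iint \psi\, d\mu^\lambda = \lambda^{2(p+1)/(p-1) - n - 2} \iint \psi(y/\lambda, s/\lambda^2)\, d\mu(y,s),
\]
but substituting this into the previous display only reproduces a tautology, so further input is required. The genuine input is the asymptotic constancy of the Giga--Kohn energy along arbitrary rescalings: by \eqref{eq:EkEtra}, both $\cE_{k,(0,0)}(\tau) = \cE_{(0,0)}(\tau - \log(-T_k))$ and $\cE_{k^\lambda,(0,0)}(\tau) = \cE_{(0,0)}(\tau - \log(-\lambda^2 T_k))$ converge to the common limit $\cE_\infty^\ast := \lim_{\sigma\to\infty}\cE_{(0,0)}(\sigma)$. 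Interpreting this limit via Lemmas \ref{lem:sptdef} and \ref{lem:enepar} as $\overline{\cE}_\infty + c_p(-t)^{(p+1)/(p-1)}\int K\,d\mu_t$ matches the corresponding weighted integrals of $\mu_t$ and $\mu_t^\lambda$; repeating the same reasoning with the Giga--Kohn energy centered at varying $(\tilde x, \tilde t)$, whose relevant similarity coordinate $\tilde x e^{\tau/2}$ is invariant under the $\lambda$-rescaling of $T_k$, produces a family of moment equations pairing $\mu_t$ and $\mu_t^\lambda$ against all Gaussians $K_{(\tilde x, \tilde t)}(\cdot, t)$. By the density of Gaussian translates in $C_0(\R^n)$ this forces $\mu_t = \mu_t^\lambda$ for a.e.~$t$, hence $\mu = \mu^\lambda$ by Lemma \ref{lem:defect}. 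Finally, substituting $\lambda \mapsto 1/\lambda$ in the boxed identity with $\mu^\lambda = \mu$ produces the statement of the lemma.
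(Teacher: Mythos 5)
Your overall strategy --- rescale, produce a defect measure $\mu^\lambda$ for $u_k^\lambda$, and then identify $\mu^\lambda=\mu$ --- is a natural one, and you correctly observe that the Brezis--Lieb change-of-variables identity shows $\mu^\lambda=\mu$ is \emph{equivalent} to the lemma's conclusion, so the identification is the whole content and genuinely new input is needed. The input you propose, however, has two serious gaps, and they cannot be repaired by minor tightening.

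First, the defect-measure decomposition from Lemma~\ref{lem:sptdef} (ii) is valid only against $C_0(\R^n)$ test functions, whereas the Giga--Kohn energy $\cE_k(\tau)$ weights $|u_k|^{p+1}$ and $|\nabla u_k|^2$ against the Gaussian $K_{(0,0)}(\cdot,t)$, which is \emph{not} compactly supported. To pass to the limit and interpret $\cE_\infty^\ast$ as $\overline{\cE}_\infty + c_p(-t)^{(p+1)/(p-1)}\int K\, d\mu_t$ you would need a uniform-in-$k$ tightness estimate at spatial infinity for $|u_k(\cdot,t)|^{p+1}\,dx$ and $|\nabla u_k(\cdot,t)|^2\,dx$, which is nowhere established and does not follow from the Morrey-type bounds of Section~\ref{sec:GKene}. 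Second, the step ``repeating with centers $(\tilde x,\tilde t)$ produces moment equations against all Gaussians $K_{(\tilde x,\tilde t)}$'' does not hold as stated: in the original $u$-variables the center of $\cE_{k,(\tilde x,\tilde t)}$ is $(\sqrt{-T_k}\tilde x, -T_k\tilde t)$, which \emph{collapses to the origin} as $k\to\infty$. What the rescaling relation actually gives is $\cE_{k^\lambda,(\tilde x/\lambda,\tilde t/\lambda^2)}(\tau)=\cE_{k,(\tilde x,\tilde t)}(\tau-\log\lambda^2)$, a shift in $\tau$ at the \emph{same} moving center; it neither establishes that the $k\to\infty$ limit exists for $\tilde x\neq 0$, nor that the putative limits pair $\mu_t$ and $\mu_t^\lambda$ against the \emph{same} fixed Gaussian test function. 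So the ``density of Gaussian translates'' step has nothing to apply to.

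The paper avoids both problems by never attempting to identify $\mu^\lambda$: it works throughout with a compactly supported cutoff $\varphi$, defines $g_k(\lambda)$ as the Gaussian-weighted Giga--Kohn energy density of $u_k^\lambda$ tested against $\varphi$, and proves $g_k(\lambda)-g_k(1)\to0$ directly by differentiating in $\lambda$. The derivative splits as $I_k(\lambda)-J_k(\lambda)$, where $I_k\ge0$ has vanishing $\lambda$-integral precisely because of the ``asymptotic constancy'' you also invoke (the $F_k(r_0)\to0$ fact from the proof of Lemma~\ref{lem:up1fixt}), while $J_k$ is controlled by $\tilde I_k^{1/2}$ via Cauchy--Schwarz and the uniform Morrey estimates of Lemma~\ref{lem:unienk}. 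Since $\varphi$ is compactly supported, passing to the $k\to\infty$ limit in the resulting identity is legitimate using Lemmas~\ref{lem:exbul}, \ref{lem:sptdef} and \ref{lem:enepar}, and the $\overline{u}$ contributions cancel by Lemma~\ref{lem:BSS}. If you want to salvage your line of attack, you would need either a genuine tightness estimate for the rescaled sequence (strong enough to test the defect measure against Gaussians), or a way to pin $\mu^\lambda$ against $\mu$ with compactly supported test functions --- and the latter is essentially what the paper's $g_k(\lambda)$ construction does.
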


\begin{proof}
Let $\varphi\in C_0^\infty(\cQ)$ satisfy $0\leq \varphi\leq 1$.  
We choose $R>0$ and $t_1<t_2<0$ such that $\supp \varphi \subset B_R\times (t_1,t_2)$. 
For $k\geq1$ and $\lambda>0$, we set 
\[
\begin{aligned}
	g_k(\lambda)
	 &:= \lambda^{-n-2} 
	\iint_{\cQ} (-t)^{\frac{2}{p-1} + 1 } 
	\left( \frac{|\nabla u_k(x,t)|^2}{2} 
	-\frac{|u_k(x,t)|^{p+1}}{p+1}  \right. \\
	&\quad \left. 
	+ \frac{|u_k(x,t)|^2}{2(p-1)(-t)} \right) 
	K_{(0,0)}\left( \frac{x}{\lambda}, \frac{t}{\lambda^2} \right) 
	\varphi\left( \frac{x}{\lambda}, \frac{t}{\lambda^2} \right) dxdt. 
\end{aligned}
\]
From Lemma \ref{lem:exbul} (iii), Lemma \ref{lem:sptdef} (i) and 
Lemma \ref{lem:enepar}, it follows that 
\[
\begin{aligned}
	&\lim_{k\to\infty} g_k(\lambda) 
	= 
	\overline{g}(\lambda)
	+ c_p \lambda^{-n-2} 
	\iint_{\cQ} (-t)^{\frac{2}{p-1} + 1 }  
	K_{(0,0)}\left( \frac{x}{\lambda}, \frac{t}{\lambda^2} \right) 
	\varphi\left( \frac{x}{\lambda}, \frac{t}{\lambda^2} \right) d\mu,  \\
	&\lim_{k\to\infty}g_k(1) =  \overline{g}(1)
	+ c_p \iint_{\cQ} (-t)^\frac{p+1}{p-1}   K_{(0,0)}(x,t) \varphi(x,t) d\mu, 
\end{aligned}
\]
where $c_p:=(p-1)/(2(p+1))$ and 
\[
\begin{aligned}
	\overline{g}(\lambda)&:=\lambda^{-n-2} \iint_{\cQ} (-t)^{\frac{2}{p-1} + 1 } 
	\left( \frac{|\nabla \overline{u}(x,t)|^2}{2} 
	-\frac{|\overline{u}(x,t)|^{p+1}}{p+1} 
	+ \frac{|\overline{u}(x,t)|^2}{2(p-1)(-t)} \right) \\
	&\quad \times 
	K_{(0,0)}\left( \frac{x}{\lambda}, \frac{t}{\lambda^2} \right) 
	\varphi\left( \frac{x}{\lambda}, \frac{t}{\lambda^2} \right) dxdt. 
\end{aligned}
\]

Since the limits of $g_k(\lambda)$ and $g_k(1)$ exist and 
Lemma \ref{lem:BSS} implies $\overline{g}(\lambda)=\overline{g}(1)$, 
it suffices to prove that 
\begin{equation}\label{eq:gklgk1lim}
	\lim_{k\to \infty}|g_k(\lambda)-g_k(1)| =0
\end{equation}
for each $\lambda>0$. 
Define $u_k^\lambda(y,s):= \lambda^{2/(p-1)} u_k(\lambda y, \lambda^2 s)$. 
Then, 
\[
\begin{aligned}
	g_k(\lambda) &=  
	\iint_\cQ (-s)^{\frac{2}{p-1} +1 } 
	\left( \frac{|\nabla u_k^\lambda (y,s)|^2}{2} 
	-\frac{|u_k^\lambda(y,s)|^{p+1}}{p+1} 
	+ \frac{|u_k^\lambda(y,s)|^2}{2(p-1)(-s)} \right) \\
	&\quad \times 
	K_{(0,0)}(y,s)  \varphi( y,s ) dyds. 
\end{aligned}
\]
By differentiating $g_k(\lambda)$ in $\lambda$, 
integrating by parts, using the equation for $u_k^\lambda$, we see that  
\begin{equation}\label{eq:dglcom}
\begin{aligned}
	\frac{dg_k}{d\lambda}
	&= 
	\iint_\cQ (-s)^{\frac{2}{p-1} +1}
	\left( \nabla u_k^\lambda \cdot \nabla \frac{d u_k^\lambda}{d\lambda} 
	- |u_k^\lambda|^{p-1} u_k^\lambda \frac{d u_k^\lambda}{d\lambda} \right. \\
	&\left. \quad + \frac{u_k^\lambda}{(p-1)(-s)}  \frac{d u_k^\lambda}{d\lambda} \right)
	K_{(0,0)} \varphi dyds \\
	&= 
	\iint_\cQ (-s)^{\frac{2}{p-1} } 
	S_{(0,0)}[u_k^\lambda](y,s) 
	\frac{d u_k^\lambda}{d\lambda} K_{(0,0)}(y,s) \varphi(y,s) dyds\\
	&\quad 
	- \iint_\cQ (-s)^{\frac{2}{p-1} +1}
	\frac{d u_k^\lambda}{d\lambda} 
	(\nabla u_k^\lambda\cdot \nabla \varphi)  K_{(0,0)} dyds
	=: I_k(\lambda) - J_k(\lambda), 
\end{aligned}
\end{equation}
where $S$ is given in Remark \ref{rem:wtSS}. 

We compute $I_k(\lambda)$. 
From $\supp \varphi \subset \R^n\times(t_1,t_2)$ and 
\begin{equation}\label{eq:uklamcal}
\left\{ 
\begin{aligned}
	&\frac{d u_k^\lambda}{d\lambda} (y,s) 
	= 2 \lambda^{\frac{2}{p-1}-1} 
	S_{(0,0)} [u_k] (\lambda y,\lambda^2 s), 
	\\ 
	&\partial_s u_k^\lambda(y,s) 
	= \lambda^{\frac{2}{p-1}+2} \partial_t u_k(\lambda y,\lambda^2 s), \\
	&\nabla u_k^\lambda(y,s) 
	= \lambda^{\frac{2}{p-1}+1} \nabla u_k(\lambda y,\lambda^2 s), 
\end{aligned}
\right.
\end{equation}
it follows that 
\[
\begin{aligned}
	I_k(\lambda) &= 
	\frac{2}{\lambda} \int_{t_1}^{t_2} \int_{\R^n} (-\lambda^2 s)^\frac{2}{p-1} 
	| S_{(0,0)}[u_k](\lambda y, \lambda^2 s) |^2 
	K_{(0,0)}(y,s) \varphi(y,s) dyds \\
	&= 
	\frac{2}{\lambda^3} \int_{\lambda^2 t_1}^{\lambda^2 t_2} \int_{\R^n} 
	(-t)^\frac{2}{p-1} 
	| S_{(0,0)}[u_k](x,t) |^2 
	K_{(0,0)}(x,t)\varphi\left( \frac{x}{\lambda}, \frac{t}{\lambda^2} \right) dxdt. 
\end{aligned}
\]
In particular, we observe that $I_k(\lambda)\geq0$. 
Since $0\leq \varphi \leq1$, we obtain 
\begin{equation}\label{eq:tilIlamk}
\begin{aligned}
	0&\leq I_k(\lambda) \leq \tilde I_k(\lambda)  \\
	&:=\frac{2}{\lambda^3} \int_{\lambda^2 t_1}^{\lambda^2 t_2} \int_{\R^n} 
	(-t)^\frac{2}{p-1}  | S_{(0,0)}[u_k](x,t) |^2 
	K_{(0,0)}(x,t) dxdt. 
\end{aligned}
\end{equation}
We note that $\tilde I_k(\lambda)$ will be used in 
the estimate for $J_k(\lambda)$. 
By \eqref{eq:Ekdercal} with $\tau=-\log(-t)$ and 
$\cE_k(\tau)=E_k(t)$, we have 
\[
\begin{aligned}
	0&\leq I_k(\lambda) \leq \tilde I_k(\lambda) \leq 
	\frac{2}{\lambda^3} 
	\int_{-\log(-\lambda^2 t_1)}^{-\log(-\lambda^2 t_2)} 
	\left( -\frac{d \cE_k}{d\tau}(\tau) \right) 
	e^{-\tau} d\tau \\
	&\leq 
	\frac{-2t_1}{\lambda} 
	\int_{-\log(-\lambda^2 t_1)}^{-\log(-\lambda^2 t_2)} 
	\left( -\frac{d \cE_k}{d\tau}(\tau) \right) d\tau
	= \frac{-2t_1}{\lambda} ( E_k(\lambda^2 t_1) - E_k(\lambda^2 t_2) ). 
\end{aligned}
\]
In what follows, we consider the case $\lambda>1$. 
The case $0<\lambda<1$ can be handled similarly. 
As in the proof of \eqref{eq:Fkrto0}, 
letting $k\to \infty$ yields 
\begin{equation}\label{eq:IktIkto0}
\begin{aligned}
	0&\leq \int_1^\lambda I_k(l) dl 
	\leq \int_1^\lambda \tilde I_k(l) dl \\
	&\leq 
	-2t_1 \int_1^\lambda 
	\frac{1}{l} ( E_k(l^2 t_1) - E_k(l^2 t_2) ) dl 
	\to 0. 
\end{aligned}
\end{equation}

As for $J_k(\lambda)$, by $\supp \varphi \subset B_R\times (t_1,t_2)$ and 
\eqref{eq:uklamcal},  we compute that 
\[
\begin{aligned}
	J_k(\lambda) 
	&= \frac{2}{\lambda^2} 
	\int_{t_1}^{t_2} \int_{B_R} (-\lambda^2 s)^{\frac{2}{p-1}+1} 
	S_{(0,0)}[u_k](\lambda y, \lambda^2 s)  \\
	&\quad \times 
	(  \nabla u_k(\lambda y,\lambda^2 s) 
	\cdot \nabla \varphi(y,s) ) 
	K_{(0,0)}(y,s) dyds \\
	&= 
	\frac{2}{\lambda^4} 
	\int_{\lambda^2 t_1}^{\lambda^2 t_2} \int_{B_{\lambda R}} 
	(-t)^{\frac{1}{p-1}} S_{(0,0)}[u_k](x, t)  \\
	&\quad \times 
	(-t)^{\frac{1}{p-1}+1} 
	\left( \nabla u_k(x,t) 
	\cdot \nabla \varphi\left( \frac{x}{\lambda}, \frac{t}{\lambda^2} \right)
	\right)  K_{(0,0)}(x,t) dxdt. 
\end{aligned}
\]
The H\"older inequality and the definition of $\tilde I_k(\lambda)$ 
in \eqref{eq:tilIlamk} show that 
\[
\begin{aligned}
	|J_k(\lambda)| 
	&\leq  
	\frac{C \tilde I_k^{1/2}}{\lambda^{5/2}} 
	\left( \int_{\lambda^2 t_1}^{\lambda^2 t_2} \int_{B_{\lambda R}} 
	(-t)^{\frac{2}{p-1}+2} 
	|\nabla u_k|^2  \left| \nabla \varphi\left( \frac{x}{\lambda}, \frac{t}{\lambda^2} 
	\right) \right|^2 
	K_{(0,0)} dxdt\right)^\frac{1}{2} \\
	&\leq 
	\frac{C_\varphi }{\lambda^{5/2}} 
	\left( \tilde I_k(\lambda) \right)^\frac{1}{2} 
	\left( \lambda^{ \frac{4}{p-1}+4-n }
	\int_{\lambda^2 t_1}^{\lambda^2 t_2} \int_{B_{\lambda R}} 
	|\nabla u_k(x,t)|^2 dxdt\right)^\frac{1}{2}, 
\end{aligned}
\]
where $C_\varphi$ depends on $\varphi$ 
and is independent of $k$ and $\lambda$. 
We note that there exists $\tilde k_1\geq1$ 
depending only on $T_1$ and $t_1$ such that 
$T_1/(-t_1)<T_k<0$ for $k\geq \tilde k_1$. 
Then, 
$T_1/(-t_1\lambda^2)<T_1/(-t_1)<T_k$ 
holds for any $k\geq \tilde k_1$ and $\lambda>1$, 
and so Lemma \ref{lem:unienk} gives 
\[
	|J_k(\lambda)| \leq 
	C_\varphi \lambda^{-\frac{1}{2}} 
	( \tilde I_k(\lambda) )^\frac{1}{2} 
\]
for $k\geq \tilde k_1$ and $\lambda>1$. 
Thus, \eqref{eq:IktIkto0} yields 
\[
	0\leq \int_1^\lambda |J_k(l)| dl 
	\leq 
	C_\varphi \int_1^\lambda
	( \tilde I_k(l) )^\frac{1}{2} dl  
	\leq 
	C_\varphi (\lambda-1)^\frac{1}{2} 
	\left( \int_1^\lambda
	\tilde I_k(l)  dl \right)^\frac{1}{2} 
	\to 0 
\]
as $k\to\infty$. 
This together with \eqref{eq:dglcom} and \eqref{eq:IktIkto0} shows that 
\[
	|g_k(\lambda)-g_k(1)|
	=\left| \int_1^\lambda I_k(l) dl 
	- \int_1^\lambda J_k(l) dl \right|  
	\to 0
\]
for $\lambda>1$ as $k\to\infty$. 
In particular, \eqref{eq:gklgk1lim} holds for $\lambda>1$. 
Similarly, \eqref{eq:gklgk1lim} also holds for  $0<\lambda<1$. 
Then the lemma follows. 
\end{proof}

We are now in a position to prove Proposition \ref{pro:stcon}. 

\begin{proof}[Proof of Proposition \ref{pro:stcon}]
Let $\lambda>0$. 
For $R_2>R_1>0$ and $t_1<t_2<t_3<t_4<0$, 
we choose $\varphi\in C^\infty_0(\cQ)$ such that 
$0\leq \varphi\leq 1$, $\varphi=1$ on $B_{R_1}\times(t_2,t_3)$ 
and $\varphi=0$ outside $B_{R_2}\times(t_1,t_4)$.  
Then, Lemma \ref{lem:defect} and Fubini's theorem show that 
the left-hand side of \eqref{eq:lemBSSmu} can be estimated as 
\[
\begin{aligned}
	\iint_{\cQ} (-t)^\frac{p+1}{p-1}   
	K_{(0,0)}(x,t) \varphi(x,t) d\mu
	&\geq 
	\int_{t_2}^{t_3}\int_{B_{R_1}} (-t)^{\frac{p+1}{p-1}-\frac{n}{2} }   
	e^{-\frac{|x|^2}{4(-t)}} \varphi(x,t) d\mu_t dt \\
	&\geq 
	\int_{t_2}^{t_3} (-t)^{\frac{p+1}{p-1}-\frac{n}{2} } 
	e^{-\frac{R_1^2}{4(-t)}}
	\mu_t(B_{R_1}) dt. 
\end{aligned}
\]
On the other hand, for the right-hand side of \eqref{eq:lemBSSmu}, we have 
\[
\begin{aligned}
	&\lambda^{-2} 
	\iint_{\cQ} (-t)^\frac{p+1}{p-1}  K_{(0,0)}(x,t) 
	\varphi\left( \frac{x}{\lambda}, \frac{t}{\lambda^2} \right) d\mu \\
	&\leq 
	\lambda^{-2} 
	\int_{\lambda^2 t_1}^{\lambda^2 t_4} \int_{B_{\lambda R_2}} 
	(-t)^{\frac{p+1}{p-1}-\frac{n}{2} }  d\mu_t dt 
	= 
	\lambda^{-2} 
	\int_{\lambda^2 t_1}^{\lambda^2 t_4} 
	(-t)^{\frac{p+1}{p-1}-\frac{n}{2} } \mu_t(B_{\lambda R_2}) dt \\
	&= 
	\lambda^{\frac{2(p+1)}{p-1}-n }
	\int_{t_1}^{t_4} 
	(- s)^{\frac{p+1}{p-1}-\frac{n}{2} } 
	\mu_{\lambda^2 s}(B_{\lambda R_2}) ds. 
\end{aligned}
\]

The above estimates together with  Lemma \ref{lem:BSSmu} imply that 
\[
	\int_{t_2}^{t_3} (-t)^{\frac{p+1}{p-1}-\frac{n}{2} } 
	e^{-\frac{R_1^2}{4(-t)}}
	\mu_t(B_{R_1}) dt
	\leq 
	\lambda^{\frac{2(p+1)}{p-1}-n }
	\int_{t_1}^{t_4} 
	(- t)^{\frac{p+1}{p-1}-\frac{n}{2} } 
	\mu_{\lambda^2 t}(B_{\lambda R_2}) dt 
\]
and that 
\[
\begin{aligned}
	&\frac{1}{2\delta} \int_{t-\delta}^{t+\delta} 
	(-t)^{\frac{p+1}{p-1}-\frac{n}{2} } 
	e^{-\frac{R_1^2}{4(-t)}}
	\mu_t(B_{R_1}) dt \\
	&\leq 
	2\lambda^{\frac{2(p+1)}{p-1}-n }
	\frac{1}{4\delta}
	\int_{t-2\delta}^{t+2\delta} 
	(- t)^{\frac{p+1}{p-1}-\frac{n}{2} } 
	\mu_{\lambda^2 t}(B_{\lambda R_2}) dt 
\end{aligned}
\]
for any $t<0$ and $\delta>0$ satisfying $t+2\delta<0$. 
Letting $\delta\to0$ with the aid of 
the Lebesgue differentiation theorem gives, for a.e.~$t\in(-\infty,0)$, 
\[
	e^{-\frac{R_1^2}{4(-t)}}
	\mu_t(B_{R_1}) 
	\leq 
	2\lambda^{\frac{2(p+1)}{p-1}-n }
	\mu_{\lambda^2 t}(B_{\lambda R_2})  
\]
for any $\lambda>0$. 
For each $t\in (-\infty,0)$ satisfying this inequality, 
we take $\lambda=(-t)^{-1/2}$. 
Since $\mu_{-1}=0$ on $\R^n$ as in Lemma \ref{lem:sptdef} (ii), we have 
\[
	\mu_t(B_{R_1}) \leq  
	2 (-t)^{\frac{n}{2}-\frac{p+1}{p-1}} e^\frac{R_1^2}{4(-t)} 
	\mu_{-1}(B_{R_2/\sqrt{-t}}) =0
\]
for a.e.~$t\in (-\infty,0)$. 
This implies that 
$\mu(B_{R_1}\times (-\infty,0))=\int_{-\infty}^0 \mu_t(B_{R_1}) dt=0$ 
for any $R_1>0$, 
and so $\mu=0$ on $\cQ$. 
By Lemma \ref{lem:sptdef} (i), 
we obtain $u_k\to\overline{u}$ strongly in $L^{p+1}_\loc(\cQ)$.  
Lemma \ref{lem:enepar} also yields 
$\nabla u_k\to\nabla \overline{u}$ 
strongly in $L^2_\loc(\cQ)$ as $k\to\infty$. 
The proof is complete. 
\end{proof}

\section{Improved $\eps$-regularity}\label{sec:eps}
We prove the following one-scale $\eps$-regularity theorem 
for the original solution $u$, and then 
we also show an analogous $\eps$-regularity 
for the blow-up limit. 
In what follows, we write 
$Q_r(x,t):= B_r(x)\times (t-r^2,t)$ and $Q_r:=Q_r(0,0)$ 
for $(x,t)\in\R^{n+1}$ and $r>0$. 
We recall that $\delta_1$ is the constant given in 
\eqref{eq:cM}. 

\begin{theorem}\label{th:epsreg}
Let $n\geq3$, $p>p_S$ and $u$ be a classical $L^{q_c}$-solution
of \eqref{eq:fujita} satisfying \eqref{eq:seq}. 
Then there exist constants $0<\eps_0<1$, $A>1$ and $C>0$ 
depending only on $n$, $p$, $M$ and $\delta_1$ such that  
the following holds: 
If 
\begin{equation}\label{eq:1scale}
\left\{ 
\begin{aligned}
	&\delta^{\frac{4}{p-1}-n} 
	\int_{-\delta^2}^{-\delta^2/2} \int_{B_{A\delta}(x_0)} 
	 |u(x,t)|^{p+1} dxdt \leq \eps_0 \\
	&\mbox{for some }x_0\in\R^n \mbox{ and }
	0<\delta\leq \sqrt{-T_1}, 
\end{aligned} \right. 
\end{equation}
then 
\[
	\|u\|_{L^\infty(Q_{\delta/16}(x_0,0))} 
	\leq C\delta^{-\frac{2}{p-1}}. 
\]
\end{theorem}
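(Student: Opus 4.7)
The plan is to bootstrap the pure $L^{p+1}$ smallness in the hypothesis up to the combined $L^{p+1}$ and $\dot{H}^1$ smallness required by the previous two-quantity $\varepsilon$-regularity theorems of \cite{CDZ07,MTpre}; the Giga--Kohn monotonicity machinery of Section \ref{sec:GKene} supplies the bridge.

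First, I would convert the hypothesis into Gaussian-weighted smallness of $|w|^{p+1}$ in the backward similarity variables $w = w_{(x_0,0)}$. Setting $\tau_* := -\log\delta^2$, a direct change of variables turns the hypothesis into $\int_{\tau_*}^{\tau_*+\log 2}\int_{B_A} |w|^{p+1}\,d\eta\,d\tau \leq C\eps_0$. To extend to all of $\R^n$ against the Gaussian weight $\rho$, I would estimate the tail $\{|\eta|>A\}$ by covering the corresponding region $\{|x-x_0|>A\delta/\sqrt 2\}$ with balls of radius $\delta/\sqrt 2$, applying the scale-invariant Morrey bound of Lemma \ref{lem:unien} on each ball, and summing the contributions against $e^{-|x-x_0|^2/(4\delta^2)}$. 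The polynomial growth in the ball count is dominated by a Gaussian factor, producing a tail bound of the form $CA^n e^{-cA^2}$, which is $\leq\eps_0$ once $A$ is chosen large enough (depending only on $n,p,M,\delta_1$); this yields
\begin{align*}
\int_{\tau_*}^{\tau_*+\log 2}\int_{\R^n} |w|^{p+1}\rho\,d\eta\,d\tau \leq C\eps_0.
\end{align*}

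Next, the hypothesis $\delta\leq\sqrt{-T_1}$ places $\tau_*$ past the initial time $\tau_1$, so Lemma \ref{lem:Ep1} applies and gives $\int_{\tau_*}^{\tau_*+\log 2}\cE(\sigma)\,d\sigma \leq C\eps_0^{1/(p+1)}$. By the mean value theorem some $\sigma_*\in(\tau_*,\tau_*+\log 2)$ satisfies $\cE(\sigma_*)\leq C\eps_0^{1/(p+1)}$, and the monotonicity formula (Lemma \ref{lem:Emono}) propagates this to $\cE(\sigma)\leq C\eps_0^{1/(p+1)}$ for every $\sigma\geq\sigma_*$. Feeding this bound into Lemma \ref{lem:wp1ene} and using the defining identity for $\cE$ produces parallel smallness of $\iint|\nabla w|^2\rho$ and $\iint|w|^{p+1}\rho$ on bounded $\sigma$-intervals past $\sigma_*$.

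Finally, on the intermediate scale $r_0 = \delta/8$ I would translate these weighted similarity-variable bounds into standard parabolic bounds on $Q_{r_0}(x_0,0)$. Since $K_{(x_0,0)}$ vanishes as $t\to 0$ for fixed $|x-x_0|$, the cylinder $Q_{r_0}(x_0,0)$ is not captured by a single weighted integral centered at $(x_0,0)$; the natural device is to re-run the Steps 1--2 analysis with similarity variables centered at the shifted point $(x_0,c_0 r_0^2)$, whose Gaussian is bounded below by a positive constant on $Q_{r_0}(x_0,0)$. This requires extending the uniform Giga--Kohn bound of Lemma \ref{lem:unibE} to centers with $\tilde t>0$, which goes through with essentially the same proof since that argument only uses the $L^{q_c}$-integrability of $u(\cdot,T_1)$ together with the heat-kernel representation. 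One then obtains
\begin{align*}
r_0^{\frac{4}{p-1}-n}\iint_{Q_{r_0}(x_0,0)} \bigl(|\nabla u|^2 + |u|^{p+1}\bigr)\,dx\,dt \leq C\eps_0^c
\end{align*}
for some $c>0$, which supplies the smallness needed for the two-quantity $\varepsilon$-regularity of \cite{CDZ07,MTpre} and yields $\|u\|_{L^\infty(Q_{r_0/2}(x_0,0))}\leq Cr_0^{-2/(p-1)}=C\delta^{-2/(p-1)}$; since $r_0/2=\delta/16$, the theorem follows. The most delicate point is the tail estimate in the first step, where the polynomial ball count must be uniformly outrun by the Gaussian using only the scale-invariant Morrey bound; the harder technical challenge is in the final step, where the center-shift must be carried out and the Giga--Kohn bounds of Section \ref{sec:GKene} (stated for $\tilde t\leq 0$) must be shown to extend to $\tilde t>0$ with the same constants.
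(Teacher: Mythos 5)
Your route is genuinely different from the paper's. The paper never passes to a two-quantity smallness; instead it uses the single-quantity, multi-scale Morrey $\eps$-regularity of Blatt--Struwe via Lemma \ref{lem:epspre}, and the whole content of the proof (Lemmas \ref{lem:32} and \ref{lem:Idout}) is to propagate the single-scale, single-center $L^{p+1}$ smallness of \eqref{eq:1scale} to $I_\rho(\tilde x,\tilde t)\leq\eps_1$ at \emph{every} scale $\rho$ and \emph{every} center with $Q_\rho(\tilde x,\tilde t)\subset Q_{\delta/2}$. You instead try to bootstrap to one-scale smallness of both $|u|^{p+1}$ and $|\nabla u|^2$ on $Q_{\delta/8}(x_0,0)$ and then invoke the Chou--Du--Zheng/MTpre $\eps$-regularity. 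This is a legitimate alternative strategy to attempt, but as written it has two gaps.

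First, the MTpre two-quantity $\eps$-regularity (Theorem 4.1 there) requires, as the paper emphasizes in the paragraph after Theorem \ref{th:epsreg}, that $\|u(\cdot,t)\|_{L^{q_c}}$ be uniformly bounded for $t\in(-1,0)$. Under \eqref{eq:seq} only a bound along the sequence $T_k$ is available, and the point of Theorem \ref{th:epsreg} is precisely to drop that uniform-in-time hypothesis. Your final step invokes \cite{CDZ07,MTpre} without addressing this, so the chain of implications does not close. (Whether the original CDZ07 Theorems 2, 2' can be applied directly, without the $L^{q_c}$ hypothesis and outside the "borderline solution" setup, is not established in your argument.)

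Second, and more concretely, the center-shift to $(x_0,c_0r_0^2)$ in your final step is more delicate than you acknowledge. When $\tilde t>0$ the Giga--Kohn energy $\cE_{(\tilde x,\tilde t)}$ need not be nonnegative, and the estimates you want to "re-run" — Lemma \ref{lem:Ep1}, the mean-value extraction of a good slice, the monotonicity propagation, and Lemma \ref{lem:wp1ene} — all rely on $\cE\geq 0$ (and on \eqref{eq:wno2}, which also fails for $\tilde t>0$). Extending the \emph{upper} bound of Lemma \ref{lem:unibE} to $\tilde t>0$, as you suggest, is not enough: one also needs an upper bound on $-\cE_{(\tilde x,\tilde t)}(\tau)$, and that requires a separate argument using the hypothesis \eqref{eq:seq} at a time $T_{k'}$ close to $0$. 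The paper treats exactly this subtlety in the proof of Lemma \ref{lem:Idout} (see \eqref{eq:Ep1app}, the remark that $\cE$ may be negative, and the bound \eqref{eq:unibElow}). Your sketch does not contain the analogue of \eqref{eq:unibElow}, so the "re-run Steps 1--2" at a positive-time center is not justified as stated. Your tail estimate in Step 1 (covering by $\delta$-balls, applying Lemma \ref{lem:unien}, and outrunning the ball count with the Gaussian) is a sound alternative to the paper's Lemma \ref{lem:Idout}, and your Step 2 with the center fixed at $(x_0,0)$ is correct since there $\cE\geq 0$ does hold; the problems are confined to the final step and to the invocation of the two-quantity $\eps$-regularity.
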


In \cite[Theorem 4.1]{MTpre}, we proved a similar $\eps$-regularity theorem 
based on the argument of Chou, Du and Zheng \cite[Theorems 2, 2']{CDZ07} 
under the assumption that 
the $L^{q_c}$ norm of $u$ is uniformly bounded for $t\in(-1,0)$ and 
that integral quantities concerning both $|u|^{p+1}$ and $|\nabla u|^2$ are small. 
Now we weaken the assumption on the $L^{q_c}$ norm and 
the smallness of $u$.

By translation invariance, it suffices to prove the case $x_0=0$. 
For $\tilde x\in \R^n$, $-1<\tilde t\leq0$ and $\rho>0$, 
we set 
\[
	I_\rho=I_\rho[u](\tilde x,\tilde t):= 
	(\rho/2)^{\frac{4}{p-1}-n}
	\int_{\tilde t-\rho^2/4}^{\tilde t-\rho^2/16} \int_{B_{\rho/2}(\tilde x)}
	|u(x,t)|^{p+1} dxdt. 
\]
We recall the following multi-scale $\eps$-regularity in \cite{MTpre}: 

\begin{lemma}\label{lem:epspre}
There exist $\eps_1>0$ and $C>0$ 
depending only on $n$ and $p$ such that 
the following holds: 
If there exists $\delta>0$ such that 
$I_\rho[u](\tilde x,\tilde t)\leq \eps_1$ 
for any $\rho>0$, $\tilde x\in\R^n$ and $-1<\tilde t\leq 0$ 
satisfying $Q_\rho(\tilde x,\tilde t)\subset Q_{\delta/2}$, 
then $\|u\|_{L^\infty(Q_{\delta/16})}\leq C\delta^{-2/(p-1)}$. 
\end{lemma}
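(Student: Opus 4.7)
The plan is to prove Lemma \ref{lem:epspre} by a parabolic $\eps$-regularity argument based on a cutoff-and-Duhamel representation and the scale-invariant Morrey-smallness hypothesis, in the style of Chou--Du--Zheng \cite{CDZ07}. First I would use the parabolic scaling $u(x, t) \mapsto \delta^{2/(p-1)}u(\delta x, \delta^2 t)$, which preserves both the equation and $I_\rho$, to reduce to $\delta = 1$; the hypothesis becomes $I_\rho[u](\tilde x, \tilde t) \leq \eps_1$ on every $Q_\rho(\tilde x, \tilde t) \subset Q_{1/2}$ and the target becomes $\|u\|_{L^\infty(Q_{1/16})} \leq C$. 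Since $\eps_1$ and $C$ must depend only on $n, p$, no global quantitative use of the $L^{q_c}$ norm is permitted; all bounds must come from the local Morrey smallness inside $Q_{1/2}$.

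For any $(x_0, t_0) \in Q_{1/16}$ and small scale $r \leq 1/64$, with $\tau = (r/8)^2$, I would introduce a spatial cutoff $\psi \in C_c^\infty(\R^n)$ with $\psi \equiv 1$ on $B_{2\sqrt\tau}(x_0)$ and supported in $B_{3\sqrt\tau}(x_0) \subset B_{1/2}$; the product $v := \psi u$ satisfies a heat equation with source $\psi|u|^{p-1}u$ plus lower-order commutator terms in $\nabla u$ and $u$, and Duhamel yields
\begin{equation*}
u(x_0, t_0) = e^{\tau\Delta}v(\cdot, t_0 - \tau)(x_0) + \int_{t_0-\tau}^{t_0}\int_{\R^n} G(x_0 - y, t_0 - s)\bigl[\psi|u|^{p-1}u + \mathrm{l.o.t.}\bigr](y, s)\,dy\,ds.
\end{equation*}
The homogeneous term only sees $v$, which is compactly supported in $B_{3\sqrt\tau}(x_0)$, so no far-field tails enter. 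By Fubini applied to the thin-slice integration in the definition of $I_r$, there is some $t_0 - \tau' \in (t_0 - r^2/4, t_0 - r^2/16)$ with $\|u(\cdot, t_0 - \tau')\|_{L^{p+1}(B_{3\sqrt\tau}(x_0))}^{p+1} \leq C r^{n - 4/(p-1) - 2} \eps_1$; combining this with $\|G(\cdot, \tau)\|_{L^{(p+1)/p}}$ bounds and a dyadic decomposition gives $|e^{\tau\Delta}v(\cdot, t_0 - \tau)(x_0)| \leq C r^{-2/(p-1)} \eps_1^{1/(p+1)}$. For the nonlinear term I would introduce the scale-invariant bootstrap quantity $N := \sup\{ r^{2/(p-1)} \|u\|_{L^\infty(Q_{r/16}(\tilde x, \tilde t))} : Q_r(\tilde x, \tilde t) \subset Q_{1/2}\}$, bound $|u|^{p-1} \leq N^{p-1} r^{-2}$ pointwise on sub-cylinders, and pair with H\"older's inequality and Gaussian moment estimates exploiting the $L^{p+1}$ Morrey smallness; the supercriticality $p > p_S$ (hence $2/(p-1) < n/2$) makes the relevant moment integrals convergent and produces a bound $C N^{p-1} r^{-2/(p-1)} \eps_1^{1/(p+1)}$. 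The lower-order commutator terms admit Caccioppoli-type estimates of the same order, giving contributions of the same form.

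Summing these estimates and taking the supremum yields $N \leq C_*\eps_1^{1/(p+1)}(1 + N^{p-1})$ with $C_*$ depending only on $n, p$. A continuity-in-scale argument --- restricting first to the truncated quantity $N_\sigma$ obtained by imposing $\tilde t \leq -\sigma$ for $\sigma > 0$ (finite by classical regularity bounded away from $\{t=0\}$), using the algebraic gap in the inequality to force $N_\sigma$ into the small branch $N_\sigma \leq 2C_*\eps_1^{1/(p+1)}$ for $\eps_1$ chosen small enough depending only on $n, p$, and then passing $\sigma \to 0$ --- yields the uniform bound $N \leq C$. Unscaling produces the claimed $L^\infty$ estimate. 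The main obstacle is coordinating the cutoff localization, the time-averaging Fubini step, and the dyadic Gaussian decomposition so that the final bound carries genuine $\eps_1$-smallness without depending on any global norm of $u$; a secondary subtlety is closing the algebraic bootstrap via the gap argument, which crucially relies on $p > p_S$ to ensure the Gaussian moment integrals converge and the exponents align in favor of the small branch.
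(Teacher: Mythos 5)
Your overall scheme (rescale to $\delta=1$, Duhamel with a cutoff, bootstrap on a scale-invariant sup quantity $N$) is reasonable in spirit, but the key nonlinear estimate does not close in the full supercritical range, and that is precisely the hard step. In your treatment of the Duhamel term you put $|u|^{p-1}$ in $L^\infty$ via the bootstrap ($|u|^{p-1}\leq N^{p-1}r^{-2}$) and pair the remaining factor $|u|$ with the Gaussian using H\"older and the $L^{p+1}$ Morrey smallness. Dimensional analysis (or a direct computation with H\"older in space-time, $\|G(\cdot,t_0-s)\|_{L^{(p+1)/p}}\sim (t_0-s)^{-n/(2(p+1))}$) shows that the dyadic time slice at scale $2^{-j}r$ below $t_0$ contributes of order $N^{p-1}\eps_1^{1/(p+1)}\,r^{-2}\,(2^{-j}r)^{2-\frac{2}{p-1}}$, so the sum over $j$ converges only when $2-\frac{2}{p-1}>0$, i.e.\ $p>2$. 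Your stated convergence criterion ``$p>p_S$, hence $2/(p-1)<n/2$'' is not the relevant condition: for $n\geq 7$ one has $p_S<2$, and for supercritical $p\in(p_S,2]$ the time singularity at $s=t_0$ in your pairing is non-integrable, so the bound $N\leq C_*\eps_1^{1/(p+1)}(1+N^{p-1})$ is not established there. Treating the near-top slices by the sup bound alone instead produces a term of size $N^{p}r^{-2/(p-1)}$ with no $\eps_1$-smallness, so the bootstrap does not close that way either. This is exactly why the literature proves such statements with Morrey-space machinery (Riesz potential estimates in parabolic Morrey spaces, or Moser-type iteration) rather than Lebesgue-space H\"older pairings; the paper itself does not reprove the step but deduces the lemma from the $\eps$-regularity in parabolic Morrey spaces of Blatt--Struwe \cite[Proposition 4.1]{BS15}, with details as in \cite[Lemma 4.4]{MTpre}.

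A secondary issue: since $Q_{\delta/2}$ abuts the possibly singular final time $t=0$, your quantity $N$ is not known to be finite a priori; your $\sigma$-truncation and continuity-in-$\sigma$ device is a sensible fix, but the ``gap'' argument needs care (for $p\leq 2$ the right-hand side $\eps(1+N^{p-1})$ is sublinear and there is no two-branch structure, while for $p>2$ one must verify that $N_\sigma$ starts on the small branch and varies continuously). These points are repairable; the essential gap is the kernel estimate for the nonlinear term described above, which fails for $p_S<p\leq 2$, $n\geq 7$.
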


\begin{proof} 
This lemma can be proved by applying the $\eps$-regularity in the 
parabolic Morrey space \cite[Proposition 4.1]{BS15}, 
see \cite[Lemma 4.4]{MTpre} for details. 
\end{proof}

Under the assumption \eqref{eq:1scale} with $x_0=0$, 
we derive estimates of $I_\rho$, 
where constants $0<\eps_0<1$ and $A>1$ 
will be determined later.

\begin{lemma}\label{lem:32}
There is $C>0$ depending only on 
$n$, $p$, $M$ and $\delta_1$ such that 
\[
	I_\rho(\tilde x,\tilde t) \leq Ch( \eps_0
	+ \cI_\delta(\tilde x,\tilde t) ) 
\]
for $\rho>0$, $\tilde x\in\R^n$ and $-1<\tilde t\leq 0$ 
with $Q_\rho(\tilde x,\tilde t)\subset Q_{\delta/2}$, 
where $h(s):=s+s^{1/2(p+1)}$ for $s\geq0$ 
and 
\[
	\cI_\delta=\cI_\delta(\tilde x,\tilde t):= 
	\delta^\frac{4}{p-1} 
	\int_{-\delta^2}^{-\delta^2/2} 
	\int_{\R^n\setminus B_{A\delta}} |u(x,t)|^{p+1} K_{(\tilde x,\tilde t)}(x,t) dxdt. 
\]
\end{lemma}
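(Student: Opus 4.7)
The plan is to relate $I_\rho$ to the Giga--Kohn energy $\cE_{(\tilde x,\tilde t)}$ via the backward similarity variables and then estimate the latter using the hypothesis \eqref{eq:1scale}. Set $\tau_\rho:=-\log(\rho^2/4)$ and $\tau_{\rho/2}:=-\log(\rho^2/16)$, so $\tau_{\rho/2}-\tau_\rho=\log 4$, and note that the change of variables $\eta=(x-\tilde x)/(\tilde t-t)^{1/2}$, $\sigma=-\log(\tilde t-t)$ sends the integration region $B_{\rho/2}(\tilde x)\times(\tilde t-\rho^2/4,\tilde t-\rho^2/16)$ of $I_\rho$ into a subset of $\{|\eta|<2\}\times(\tau_\rho,\tau_{\rho/2})$. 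Tracking the Jacobian together with $|u|^{p+1}=e^{(p+1)\sigma/(p-1)}|w|^{p+1}$ produces an integrand with the scaling weight $e^{(2/(p-1)-n/2)\sigma}$, whose maximum on $(\tau_\rho,\tau_{\rho/2})$ is exactly $(\rho/2)^{n-4/(p-1)}$. Since the prefactor $(\rho/2)^{4/(p-1)-n}$ in $I_\rho$ cancels this weight and $\rho(\eta)\geq e^{-1}$ on $\{|\eta|<2\}$, I obtain the clean bound
\[
	I_\rho\leq C\int_{\tau_\rho}^{\tau_{\rho/2}}\int_{\R^n}|w_{(\tilde x,\tilde t)}(\eta,\sigma)|^{p+1}\rho(\eta)\,d\eta\,d\sigma.
\]
Applying Lemma \ref{lem:wp1ene} on this interval (whose length is the absolute constant $\log 4$) reduces the problem to estimating $\cE_{(\tilde x,\tilde t)}(\tau_\rho)$.

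To control this energy, I exploit the monotonicity (Lemma \ref{lem:Emono}) to step back to a time at which the hypothesis \eqref{eq:1scale} becomes directly usable. Define $\tau_*:=-\log(\tilde t+\delta^2)$ and $\tau':=-\log(\tilde t+\delta^2/2)$, which correspond to the original times $t=-\delta^2$ and $t=-\delta^2/2$ appearing in \eqref{eq:1scale}. From $Q_\rho(\tilde x,\tilde t)\subset Q_{\delta/2}$, $\rho\leq\delta/2$ and $\delta\leq\sqrt{-T_1}$ one checks $\tau_1\leq\tau_*<\tau'\leq\tau_\rho$ as well as $\tau'-\tau_*\in[\log 2,\log 3]$. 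Monotonicity then gives $\cE(\tau_\rho)\leq\cE(\tau')$ and $(\tau'-\tau_*)\cE(\tau')\leq\int_{\tau_*}^{\tau'}\cE(\sigma)\,d\sigma$, so Lemma \ref{lem:Ep1} applied on $(\tau_*,\tau')$ yields
\[
	\cE(\tau')\leq CZ+CZ^{1/(p+1)},\qquad Z:=\int_{\tau_*}^{\tau'}\!\!\int_{\R^n}|w_{(\tilde x,\tilde t)}|^{p+1}\rho\,d\eta\,d\sigma,
\]
where the factor $(\tau'-\tau_*)^{(p-1)/(2(p+1))}=O(1)$ is absorbed into the constant.

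It remains to estimate $Z$. The identity $|w|^{p+1}\rho\,d\eta\,d\sigma=(\tilde t-t)^{2/(p-1)}|u|^{p+1}K_{(\tilde x,\tilde t)}\,dx\,dt$ rewrites $Z$ as an integral over $(-\delta^2,-\delta^2/2)\times\R^n$ on which $(\tilde t-t)^{2/(p-1)}\leq C\delta^{4/(p-1)}$. Splitting the spatial integral as $B_{A\delta}(x_0)\cup(\R^n\setminus B_{A\delta}(x_0))$ and recalling $|\tilde x|\leq\delta/2$, the near piece is controlled using the pointwise bound $K_{(\tilde x,\tilde t)}\leq C(\tilde t-t)^{-n/2}\leq C\delta^{-n}$ together with \eqref{eq:1scale}, which gives a contribution of size at most $C\delta^{4/(p-1)-n}\int_{-\delta^2}^{-\delta^2/2}\!\int_{B_{A\delta}(x_0)}|u|^{p+1}\,dx\,dt\leq C\eps_0$; the far piece is exactly $\delta^{4/(p-1)}\cdot\delta^{-4/(p-1)}\cI_\delta=\cI_\delta$ by definition. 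Hence $Z\leq C(\eps_0+\cI_\delta)=:CY$. A short case analysis separating $Y\leq 1$ from $Y>1$ converts $f(\cE(\tau_\rho))\leq f(C(Y+Y^{1/(p+1)}))$ into $Ch(Y)$: for $Y\leq 1$ the dominant term is $(Y^{1/(p+1)})^{1/2}=Y^{1/(2(p+1))}$, while for $Y>1$ every term is absorbed into $Y$. Chaining the three steps yields $I_\rho\leq Ch(\eps_0+\cI_\delta)$. The main obstacle is purely bookkeeping---verifying that the scaling exponents cancel exactly and that the admissibility conditions $\tau_1\leq\tau_*<\tau'\leq\tau_\rho$ are consistent under the hypotheses $\rho\leq\delta/2$ and $\delta\leq\sqrt{-T_1}$.
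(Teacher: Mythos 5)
Your proof is correct and follows essentially the same route as the paper's: convert $I_\rho$ to the weighted $|w|^{p+1}$ integral via similarity variables, apply Lemma \ref{lem:wp1ene} on an interval of length $\log 4$, step back via monotonicity of $\cE$ to the interval $(\tau_*,\tau')$ corresponding to $t\in(-\delta^2,-\delta^2/2)$, apply Lemma \ref{lem:Ep1} there, convert back, split near/far using \eqref{eq:1scale}, and chain the sublinear functions into $h$. The only cosmetic difference is that you average $\cE$ directly in the similarity time $\tau$ while the paper averages $E$ in the original time $t$ and then converts; this is an equivalent bookkeeping choice.
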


\begin{proof}
We note that $|\tilde x|\leq \delta/2$, $-\delta^2/4\leq \tilde t\leq 0$ 
and $\rho\leq \delta/2$ hold if $Q_\rho(\tilde x,\tilde t) \subset Q_{\delta/2}$. 
By using the backward similarity variables, 
applying Lemma \ref{lem:wp1ene} with $\tau=-\log(\rho^2/16)$ 
and $\tau'=-\log(\rho^2/4)$ 
and using \eqref{eq:EcE} with 
the monotonicity of $E$, we see that 
\[
\begin{aligned}
	I_\rho(\tilde x,\tilde t) &\leq 
	C \int_{\tilde t-\rho^2/4}^{\tilde t-\rho^2/16} 
	(\tilde t-t)^\frac{2}{p-1} 
	\int_{B_{\rho/2}(\tilde x)} |u(x,t)|^{p+1} (\tilde t-t)^{-\frac{n}{2}} 
	e^{-\frac{|x-\tilde x|^2}{4(\tilde t-t)}}
	dxdt \\
	&\leq 
	C \int_{-\log(\rho^2/4)}^{-\log(\rho^2/16)} \int_{\R^n} 
	|w_{(\tilde x,\tilde t)}(\eta,\tau)|^{p+1} \rho(\eta) d\eta d\tau \\
	&\leq 
	C f( \cE_{(\tilde x,\tilde t)} (  -\log (\rho^2/4) ) ) f(\log 4)
	\leq C f( E_{(\tilde x,\tilde t)} (  \tilde t-\rho^2/4 ) ) \\
	&\leq 
	C f( E_{(\tilde x,\tilde t)} ( - \delta^2/2 ) ) 
	\leq 
	Cf\left( \frac{1}{\delta^2}
	\int_{-\delta^2}^{-\delta^2/2} 
	E_{(\tilde x,\tilde t)}(t) dt \right)
\end{aligned}
\]
if $Q_\rho(\tilde x,\tilde t)\subset Q_{\delta/2}$, 
where $f(s)=s+s^{1/2}$ for $s\geq0$. 
From Lemma \ref{lem:Ep1} with $\tau=-\log(\tilde t + \delta^2/2)$, 
$\tau'=-\log(\tilde t+\delta^2)$ and 
$\log((\tilde t+\delta^2)/(\tilde t+\delta^2/2)) \leq C$, 
it follows that 
\[
\begin{aligned}
	&\frac{1}{\delta^2} \int_{-\delta^2}^{-\delta^2/2} 
	E_{(\tilde x,\tilde t)}(t) dt
	\leq \frac{1}{\delta^2}
	\int_{-\log(\tilde t+\delta^2)}^{-\log(\tilde t+\delta^2/2)} 
	e^{-\tau} 
	\cE_{(\tilde x,\tilde t)}(\tau) d\tau  \\
	&\leq 
	C \int_{-\log(\tilde t+\delta^2)}^{-\log(\tilde t+\delta^2/2)} 
	\cE_{(\tilde x,\tilde t)}(\tau) d\tau  
	\leq 
	C \tilde f\left( 
	\int_{-\log(\tilde t+\delta^2)}^{-\log(\tilde t+\delta^2/2)}  \int_{\R^n} 
	|w_{(\tilde x,\tilde t)}|^{p+1} \rho d\eta d\tau \right) \\
	&\leq 
	C\tilde f\left( 
	\int_{-\delta^2}^{-\delta^2/2} 
	(\tilde t-t)^\frac{2}{p-1} 
	\int_{\R^n} |u(x,t)|^{p+1} K_{(\tilde x,\tilde t)}(x,t) dxdt \right), 
\end{aligned}
\]
where $\tilde f(s):=s+s^{1/(p+1)}$ for $s\geq0$. 
Thus, 
\[
	I_\rho(\tilde x,\tilde t)
	\leq 
	C(f\circ \tilde f)\left( 
	\delta^{\frac{4}{p-1}-n}
	\int_{-\delta^2}^{-\delta^2/2} 
	\int_{B_{A\delta}} |u|^{p+1} dxdt 
	+ \cI_\delta(\tilde x,\tilde t) \right). 
\]
Then \eqref{eq:1scale} yields the desired inequality.  
\end{proof}

We estimate $\cI_\delta$.

\begin{lemma}\label{lem:Idout}
There is $C>0$ depending only on 
$n$, $p$, $M$ and $\delta_1$ such that 
\[
	\cI_\delta(\tilde x,\tilde t) \leq C e^{-\frac{A^2}{C}}
\]
for $\rho>0$, $\tilde x\in\R^n$ and $-1<\tilde t\leq 0$ 
with $Q_\rho(\tilde x,\tilde t)\subset Q_{\delta/2}$. 
\end{lemma}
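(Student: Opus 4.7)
The plan is to exploit two observations. On the integration region the time separation $\tilde t-t$ is comparable to $\delta^2$ and $|x-\tilde x|$ is at least of order $A\delta$, so the backward kernel $K_{(\tilde x,\tilde t)}$ can be bounded pointwise by $C\delta^{-n}$ times a Gaussian of width $\delta$, with the outer-ball restriction $|x|\geq A\delta$ extracting an overall Gaussian gain $e^{-A^2/C}$. The weighted integral of $|u|^{p+1}$ that remains will then be absorbed into a constant via the scale-invariant local $L^{p+1}$ Morrey estimate from Lemma~\ref{lem:unien}(i), summed over a lattice covering of $\R^n$.

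Concretely, I would first use $Q_\rho(\tilde x,\tilde t)\subset Q_{\delta/2}$ to infer $|\tilde x|\leq\delta/2$ and $\tilde t\in(-\delta^2/4,0]$, which combined with $t\in(-\delta^2,-\delta^2/2)$ gives $\tilde t-t\in[\delta^2/4,\delta^2]$; moreover $|x-\tilde x|\geq A\delta/2$ on the integration region whenever $A\geq 1$. Splitting the exponent as
\[
\frac{|x-\tilde x|^2}{4(\tilde t-t)}\geq \frac{|x-\tilde x|^2}{4\delta^2}\geq \frac{A^2}{32}+\frac{|x-\tilde x|^2}{8\delta^2}
\]
then yields the pointwise bound
\[
K_{(\tilde x,\tilde t)}(x,t)\leq C\delta^{-n}e^{-A^2/32}e^{-|x-\tilde x|^2/(8\delta^2)}
\]
throughout the integration region. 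The hypothesis $\delta\leq\sqrt{-T_1}$ forces $-\delta^2\geq T_1$, so Lemma~\ref{lem:unien}(i), applied with its parameter set equal to $-\delta^2$, delivers the uniform local estimate
\[
\int_{-\delta^2}^{-\delta^2/2}\int_{B_{\delta/\sqrt{2}}(y)}|u|^{p+1}\,dxds\leq C\delta^{n-\frac{4}{p-1}}\quad\text{for every }y\in\R^n.
\]

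To finish, I would cover $\R^n$ by balls $B_{\delta/\sqrt{2}}(y_j)$ with centers $y_j$ on a lattice of spacing comparable to $\delta$, apply the local bound on each ball, sum the contributions against the Gaussian weight $e^{-|x-\tilde x|^2/(8\delta^2)}$, and compare the resulting lattice sum $\sum_j e^{-|y_j-\tilde x|^2/(C\delta^2)}$ to the integral $\delta^{-n}\int_{\R^n}e^{-|y|^2/(C\delta^2)}\,dy$, which is $O(1)$ uniformly in $\delta$ by a standard Riemann-sum comparison. Multiplying by the prefactor $\delta^{4/(p-1)}$ then produces $\cI_\delta\leq Ce^{-A^2/32}$, which gives the stated bound with a constant depending only on $n$, $p$, $M$, $\delta_1$. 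No step is particularly delicate here: the only care needed is in the pointwise splitting of the kernel exponent and in verifying that the Gaussian lattice-sum comparison is uniform in $\delta$, both of which are standard.
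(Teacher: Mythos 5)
Your proposal is correct, and it takes a genuinely different route from the paper's proof. After extracting the Gaussian factor $e^{-A^2/C}$ (which both you and the paper do, though by slightly different kernel comparisons), the paper is left with the quantity $\cJ_\delta := \int_{-\delta^2}^{-\delta^2/2}(\delta^2-t)^{2/(p-1)}\int_{\R^n}|u|^{p+1}K_{(0,\delta^2)}\,dx\,dt$, which it bounds by extending the Giga--Kohn backward similarity framework to a scaling center $\tilde t = \delta^2 > 0$. This is delicate: the time interval for $\tau$ is then finite, so identities such as \eqref{eq:wno2} may fail, the energy $\cE_{(0,\delta^2)}$ could be negative, and the paper has to re-derive a version of Lemma~\ref{lem:wp1ene} avoiding those identities, bound the negative part of $\cE_{(0,\delta^2)}$ separately via \eqref{eq:seq}, and close through the sublinear inequality $\cJ_\delta \leq C\cJ_\delta^{1/(p+1)}+C$. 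You sidestep all of this: you split the exponent in $K_{(\tilde x,\tilde t)}$ on the set $|x-\tilde x|\geq A\delta/2$, and then control the remaining Gaussian-weighted space integral by summing the already-established scale-invariant Morrey bound of Lemma~\ref{lem:unien}(i) over a $\delta$-lattice against the residual Gaussian weight, which is an $O(1)$ Riemann-sum comparison uniform in $\delta$ and $\tilde x$. Your argument is more elementary and keeps the Giga--Kohn machinery confined to where it was already established; the paper's argument has the advantage of staying entirely inside the weighted-energy framework (no covering needed) and of running in exact parallel with the proof of the analogous estimate for the blow-up limit, Lemma~\ref{lem:blepsreg}, where the quantity $\cJ_{k,\delta}$ is handled the same way. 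Both approaches share the same minor boundary technicality at $\delta=\sqrt{-T_1}$ (where $-\delta^2=T_1$ sits at the edge of the range in Lemma~\ref{lem:unien}, resp.\ of the paper's ``$0<\delta^2<-T_1$''), which is harmless.
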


\begin{proof}
We use the notation $K_{(\tilde x,\tilde {t})}$ 
even for $\tilde t>0$. Then,  
\[
	K_{(\tilde x,\tilde t)} (x,t) 
	\leq 
	CK_{(0,\delta^2/2)} (x,t)
	\leq 
	Ce^{-\frac{A^2}{C}} K_{(0,\delta^2)} (x,t)
\]
for $x\in \R^n\setminus B_{A\delta}$ and $-\delta^2 < t < -\delta^2/2$ 
if $Q_\rho(\tilde x,\tilde t)\subset Q_{\delta/2}$. 
Thus, 
\begin{equation}\label{eq:fIddef}
\begin{aligned}
	&\cI_\delta(\tilde x,\tilde t) 
	\leq 
	\delta^\frac{4}{p-1} e^{-\frac{A^2}{C}}
	\int_{-\delta^2}^{-\delta^2/2}  
	\int_{\R^n} |u(x,t)|^{p+1} K_{(0,\delta^2)}(x,t) dxdt \\
	&\leq 
	C e^{-\frac{A^2}{C}}
	\int_{-\delta^2}^{-\delta^2/2}  
	(\delta^2 - t)^\frac{2}{p-1}
	\int_{\R^n} |u|^{p+1} K_{(0,\delta^2)} dxdt 
	=: C e^{-\frac{A^2}{C}} \cJ_\delta. 
\end{aligned}
\end{equation}

We observe that $E_{(\tilde x,\tilde t)}(t)$, 
$w_{(\tilde x, \tilde t)}(\eta,\tau)$ and $\cE_{(\tilde x,\tilde t)}(\tau)$ 
can be defined 
even when $\tilde t>0$ provided that $-1<t<0$ and 
$-\log(\tilde t+1)<\tau<-\log \tilde t$. 
In the case $\tilde t>0$, the properties of $\cE$ 
which rely on the infinite length of the time-interval for $\tau$, 
such as \eqref{eq:wno2},  could not hold. 
However, the monotonicity formula \eqref{eq:GKmn} still holds. 
Therefore, from \eqref{eq:w22} and 
the same computations as in Lemmas \ref{lem:Ep1} and \ref{lem:wp1ene} 
avoiding the use of \eqref{eq:wno2}, 
it follows that 
\begin{equation}\label{eq:Ep1app}
\begin{aligned}
	&\int_{\tau'}^\tau \int_{\R^n} |w_{(\tilde x,\tilde t)}(\eta,\sigma)|^{p+1} 
	\rho(\eta) d\eta d\sigma \\
	&\leq 
	C( \cE_{(\tilde x,\tilde t)}(\tau')
	-\cE_{(\tilde x,\tilde t)}(\tau) )^\frac{1}{2}  
	(\tau-\tau')^\frac{p-1}{2(p+1)}  \\
	&\quad \times 
	\left( \int_{\tau'}^\tau \int_{\R^n} |w_{(\tilde x,\tilde t)}|^{p+1}
	\rho d\eta d\sigma 
	\right)^\frac{1}{p+1} 
	+ \frac{2(p+1)}{p-1} \cE_{(\tilde x,\tilde t)}(\tau') (\tau-\tau')
\end{aligned}
\end{equation}
for $\tilde t>0$ and $-\log(\tilde t+1)<\tau'<\tau<-\log \tilde t$. 
Here we remark that $\cE$ could be negative, since 
we do not know the validity of \eqref{eq:wno2}. 
Moreover, the proof of Lemma \ref{lem:unibE} is still valid when $0<\tilde t<-T_1$, 
since $-\delta_1 T_1\geq 2^{-1} \delta_1 (\tilde t-T_1)$. 
Thus, the uniform estimate 
\begin{equation}\label{eq:cEbddaf}
	\cE_{(\tilde x,\tilde t)}(\tau) \leq C 
\end{equation}
also holds for $0<\tilde t<-T_1$ and $-\log(\tilde t-T_1)<\tau<-\log \tilde t$.

By the assumption on $\delta$ in \eqref{eq:1scale}, we can check that 
$0<\delta^2<-T_1$ and $-\log(\delta^2-T_1)<-\log(2\delta^2)<-\log(\delta^2)$, 
and so $\cE_{(0,\delta^2)}(-\log(2\delta^2)) \leq C$ holds. 
Therefore, \eqref{eq:Ep1app} and \eqref{eq:cEbddaf} yield 
\begin{equation}\label{eq:tilIdele}
\begin{aligned}
	\cJ_\delta
	&= 
	\int_{-\log(2\delta^2)}^{-\log(3\delta^2/2)}  
	\int_{\R^n} |w_{(0,\delta^2)}(\eta,\tau)|^{p+1} \rho(\eta) 
	d\eta d\tau \\
	&\leq 
	C( C -\cE_{(0,\delta^2)}(-\log(3\delta^2/2 ) ) )^\frac{1}{2}  
	\cJ_\delta^\frac{1}{p+1} + C. 
\end{aligned}
\end{equation}
Recall \eqref{eq:seq}. 
Then there exists $k'\geq1$ depending on $\delta$ 
such that $-\delta^2/2<T_{k'}<0$. 
Since 
$-\log(\delta^2+1) < -\log( 3\delta^2/2 ) 
< -\log(\delta^2-T_{k'} ) < -\log (\delta^2)$, 
we can apply the monotonicity of $\cE_{(0,\delta^2)}$ to see that 
\[
	-\cE_{(0,\delta^2)}(-\log(3\delta^2/2 ) )
	\leq 
	-\cE_{(0,\delta^2)}(-\log(\delta^2-T_{k'}  )) 
	= 
	-E_{(0,\delta^2)}(T_{k'}). 
\]
From \eqref{eq:oriEdef}, \eqref{eq:seq}
and the H\"older inequality, it follows that 
\begin{equation}\label{eq:unibElow}
\begin{aligned}
	&-E_{(0,\delta^2)}(T_{k'}) \\
	&\leq 
	C (\delta^2-T_{k'})^{\frac{2}{p-1}+1} 
	\int_{\R^n} |u(x,T_{k'})|^{p+1}  K_{(0,\delta^2)}(x,T_{k'}) dx \\
	&\leq 
	CM^{p+1} (\delta^2-T_{k'})^{\frac{p+1}{p-1}-\frac{n}{2}}  
	\left( \int_{\R^n} e^{-\frac{|x|^2}{C(\delta^2-T_{k'})}} dx 
	\right)^{1-\frac{p+1}{q_c}} \leq CM^{p+1}. 
\end{aligned} 
\end{equation}
Hence by \eqref{eq:tilIdele}, there exists $C>0$ 
depending only on  $n$, $p$, $M$ and $\delta_1$ such that 
$\cJ_\delta \leq C \cJ_\delta^{1/(p+1)} + C$. 
Since the right-hand side is sublinear, we obtain 
$\cJ_\delta \leq C$. 
This together with \eqref{eq:fIddef} implies 
the desired estimate. 
\end{proof}

We are now in a position to prove Theorem \ref{th:epsreg}.

\begin{proof}[Proof of Theorem \ref{th:epsreg}]
By Lemmas \ref{lem:32} and \ref{lem:Idout}, 
there exists a constant $\tilde C>0$ satisfying 
$I_\rho(\tilde x,\tilde t) \leq 
\tilde C h(\eps_0) + \tilde C h(e^{-A^2/\tilde C})$ 
for any $0<\eps_0<1$ and $A>1$ if $Q_\rho(\tilde x,\tilde t)\subset Q_{\delta/2}$, 
where $\tilde C$ is independent of $\delta$. 
For $\eps_1$ in Lemma \ref{lem:epspre}, 
we choose $0<\eps_0<1$ and $A>1$ such that 
$\tilde C  h(\eps_0)<\eps_1/2$ and 
$\tilde C h( e^{-A^2/\tilde C} ) <\eps_1/2$. 
Then, $I_\rho(\tilde x,\tilde t)\leq \eps_1$ 
if $Q_\rho(\tilde x,\tilde t)\subset Q_{\delta/2}$. 
Hence we can apply Lemma \ref{lem:epspre} to obtain 
$\|u\|_{L^\infty( Q_{\delta/16}) } \leq C\delta^{-2/(p-1)}$. 
The proof is complete. 
\end{proof}

In a similar way to Theorem \ref{th:epsreg}, 
we also show the following $\eps$-regularity for the blow-up limit $\overline{u}$ 
given in Proposition \ref{pro:stcon}:

\begin{lemma}\label{lem:blepsreg}
There exist constants $0<\eps_*<1$, $A>1$ and $C>0$ 
depending only on $n$, $p$, $M$ and $\delta_1$ such that the following holds: 
If 
\begin{equation}\label{eq:1scalebl}
\begin{aligned}
	\delta^{\frac{4}{p-1}-n} 
	\int_{t_0-\delta^2}^{t_0-\delta^2/2} \int_{B_{A\delta}(x_0)} 
	|\overline{u}|^{p+1} dxdt 
	\leq \eps_*  
\end{aligned}
\end{equation}
for some $0<\delta\leq \sqrt{-T_1}$ and $(x_0,t_0)\in\R^n\times(-\infty,0]$, 
then 
\[
	\|\overline{u}\|_{L^\infty(Q_{\delta/16}(x_0,t_0))} 
	\leq C\delta^{-\frac{2}{p-1}}. 
\]
\end{lemma}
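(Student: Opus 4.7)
The plan is to derive Lemma~\ref{lem:blepsreg} from Theorem~\ref{th:epsreg} applied to the rescaled sequence $u_k$, combined with the strong convergence $u_k\to\overline{u}$ from Proposition~\ref{pro:stcon}. The overall strategy is: first lift Theorem~\ref{th:epsreg} to $u_k$ at an arbitrary base point $(x_0,t_0)\in\R^n\times(-\infty,0]$ with constants uniform in $k$; then use $L^{p+1}_{\loc}$ convergence to transfer the hypothesis \eqref{eq:1scalebl} from $\overline{u}$ to $u_k$ for large $k$; finally pass the resulting $L^\infty$ bound to $\overline{u}$ via a.e.\ convergence.

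For the lifted $\eps$-regularity, observe that the proof of Theorem~\ref{th:epsreg} uses only three ingredients: the semilinear heat PDE; the uniform Giga--Kohn bound $\cE_{(\tilde x,\tilde t)}(\tau)\leq C$ of Lemma~\ref{lem:unibE} together with its consequences Lemmas~\ref{lem:Ep1}, \ref{lem:wp1ene}; and the single $L^{q_c}$ bound at a time in the sequence $\{T_k\}$ invoked in \eqref{eq:unibElow}. For the rescaled solutions, direct substitutes are available: $u_k$ satisfies \eqref{eq:ukfujita}; the uniform bound $\cE_{k,(\tilde x,\tilde t)}(\tau)\leq C$ is Lemma~\ref{lem:unibEk}, with Lemma~\ref{lem:wp1Ek} furnishing the analogs of Lemmas~\ref{lem:Ep1}, \ref{lem:wp1ene}; and the required $L^{q_c}$ control is simply $\|u_k(\cdot,-1)\|_{L^{q_c}(\R^n)}\leq M$ from \eqref{eq:Tkbdd}. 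Repeating the proofs of Lemmas~\ref{lem:32}, \ref{lem:Idout} and Theorem~\ref{th:epsreg} with $u_k$ in place of $u$ and with scaling center shifted from $(0,\delta^2)$ to $(x_0,t_0+\delta^2)$, I obtain constants $\eps_0,A,C>0$ depending only on $n,p,M,\delta_1$ such that, whenever $0<\delta\leq\sqrt{-T_1}$, $(x_0,t_0)\in\R^n\times(-\infty,0]$, and $k$ is large enough for admissibility (which amounts to $-T_1/T_k<t_0-\delta^2$, automatic since $T_k\to 0^-$ forces $-T_1/T_k\to-\infty$), the smallness
\[
\delta^{\frac{4}{p-1}-n}\int_{t_0-\delta^2}^{t_0-\delta^2/2}\int_{B_{A\delta}(x_0)}|u_k|^{p+1}\,dx\,dt\leq\eps_0
\]
implies $\|u_k\|_{L^\infty(Q_{\delta/16}(x_0,t_0))}\leq C\delta^{-2/(p-1)}$.

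Set $\eps_*:=\eps_0/2$ and assume \eqref{eq:1scalebl}. Strong convergence $u_k\to\overline{u}$ in $L^{p+1}(B_{A\delta}(x_0)\times(t_0-\delta^2,t_0-\delta^2/2))$ from Proposition~\ref{pro:stcon} gives, for all sufficiently large $k$, the smallness above with $2\eps_*=\eps_0$; the lifted $\eps$-regularity then yields $\|u_k\|_{L^\infty(Q_{\delta/16}(x_0,t_0))}\leq C\delta^{-2/(p-1)}$ with $C$ independent of $k$. Since Lemma~\ref{lem:exbul}~(v) provides $u_k\to\overline{u}$ a.e.\ on $Q_{\delta/16}(x_0,t_0)$, the uniform bound transfers to $\overline{u}$, completing the argument. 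The only substantive technical work is in the middle step: verifying that each identity and monotonicity estimate in Section~\ref{sec:eps} carries through faithfully with the shifted base point $(x_0,t_0+\delta^2)$ and $k$-uniform constants. This is mechanical rather than conceptual, and precisely what Section~\ref{sec:GKene} was engineered to enable --- the bound \eqref{eq:Tkbdd} replaces \eqref{eq:seq} in a $k$-free manner, and Lemmas~\ref{lem:unibEk}, \ref{lem:wp1Ek} supply $k$-uniform analogs of the relevant Giga--Kohn estimates.
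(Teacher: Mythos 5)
Your overall strategy is essentially the paper's: bound the multi-scale Morrey quantity $I_\rho[u_k]$ uniformly in $k$ using the Giga--Kohn framework (Lemmas \ref{lem:monok}, \ref{lem:unibEk}, \ref{lem:wp1Ek} and the extended version \eqref{eq:cEbddaf}), and then combine with the local $\eps$-regularity Lemma \ref{lem:epspre}. The only organizational difference is in the order of the last two steps: the paper passes the smallness of $I_\rho[u_k]$ to $I_\rho[\overline{u}]$ via the strong $L^{p+1}_{\loc}$ convergence (Proposition \ref{pro:stcon}) and then applies Lemma \ref{lem:epspre} once to $\overline{u}$, whereas you apply a lifted Theorem \ref{th:epsreg} to each $u_k$ to get $k$-uniform $L^\infty$ bounds and then pass to the limit via a.e.\ convergence. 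Both routes go through the same computational core; neither is substantially simpler.

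There is, however, one concrete inaccuracy in your list of substitutes that matters and would cause the lifted proof of Lemma \ref{lem:Idout} to fail in the main case. You assert that ``the required $L^{q_c}$ control is simply $\|u_k(\cdot,-1)\|_{L^{q_c}(\R^n)}\leq M$ from \eqref{eq:Tkbdd}.'' That is the wrong substitute. The role of the $L^{q_c}$ bound in \eqref{eq:unibElow} is to control $-\cE$ at the scaling center (here $(x_0,t_0+\delta^2)$) \emph{from above}; after applying the monotonicity of $E$, this requires a time $t^*$ with $t_0-\delta^2/2 < t^* < 0$ and $\|u_k(\cdot,t^*)\|_{L^{q_c}}\leq M$. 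But $t^*=-1$ lies strictly \emph{before} $t_0-\delta^2/2$ whenever $t_0\geq -1+\delta^2/2$; in particular it fails for $t_0=0$ (the case the paper treats explicitly, and where one always has $t_0-\delta^2/2 = -\delta^2/2 > -1$). The paper circumvents this by passing back to $u$ via \eqref{eq:EkEtra} and choosing a \emph{later} time $T_{k_0'}$ from the sequence, satisfying $-T_k(t_0-\delta^2/2)<T_{k_0'}<0$, i.e.\ exploiting the full force of \eqref{eq:seq}, not just the single fixed-time estimate \eqref{eq:Tkbdd}. If you repair that one substitution --- use $\|u(\cdot,T_{k_0'})\|_{L^{q_c}}\leq M$ at a suitable $T_{k_0'}$ rather than $\|u_k(\cdot,-1)\|_{L^{q_c}}\leq M$ --- your argument closes.
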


\begin{proof}
By translation invariance, we only have to consider the case 
$(x_0,t_0)=(0,0)$. 
The strategy of proof is the same as that of Theorem \ref{th:epsreg}. 
Since Lemma \ref{lem:epspre} still holds for $\overline{u}$ 
with $I_\rho[u]$ replaced by $I_\rho[\overline{u}]$, 
our temporal goal is to estimate $I_\rho[u_k]$ uniformly for $k$. 
After deriving the estimate, we will take a limit $k\to\infty$.

Let $0<\eps_*<1$ and $A>1$ be constants specified later. 
Assume \eqref{eq:1scalebl}. 
Proposition \ref{pro:stcon} shows that 
there exists $k_0$ satisfying 
\[
\begin{aligned}
	\delta^{\frac{4}{p-1}-n} 
	\int_{-\delta^2}^{-\delta^2/2} \int_{B_{A\delta}} |u_k|^{p+1} dxdt 
	\leq 2\eps_* 
\end{aligned}
\]
for all $k\geq k_0$. 
Therefore, by the same computations as in Lemma \ref{lem:32} 
based on Lemma \ref{lem:wp1Ek}, and by 
\eqref{eq:fIddef} and the first line in \eqref{eq:tilIdele}, 
we see that 
\[
\begin{aligned}
	&I_\rho[u_k] (\tilde x,\tilde t) 
	\leq 
	C h ( \eps_* + e^{-\frac{A^2}{C}} \cJ_{k,\delta} ),  \\
	&\cJ_{k,\delta}:= 
	\int_{-\log(2\delta^2)}^{-\log(3\delta^2/2)}  
	\int_{\R^n} |w_{k,(0,\delta^2)}|^{p+1} \rho  d\eta d\tau, 
\end{aligned}
\]
if $Q_\rho(\tilde x,\tilde t)\subset Q_{\delta/2}$, where 
$w_k$ is given in \eqref{eq:EkwkcEk}. 
By \eqref{eq:Ep1app}, we have 
\[
\begin{aligned}
	\cJ_{k,\delta} 
	&\leq 
	C( \cE_{k,(0,\delta^2)}(-\log(2\delta^2))
	-\cE_{k,(0,\delta^2)}(-\log(3\delta^2/2)) )^\frac{1}{2}  
	\cJ_{k,\delta}^\frac{1}{p+1} \\
	&\quad 
	+ \frac{2(p+1)}{p-1} \cE_{k,(0,\delta^2)}(-\log(2\delta^2)) 
	\log\frac{4}{3}. 
\end{aligned}
\]
For each $k\geq k_0$, there exists $k_0'\geq k$ such that 
$T_k \delta^2/2<T_{k_0'}<0$. 
Then, from \eqref{eq:EcEk}, the second equality in \eqref{eq:EkEtra} 
and the same computations as in Lemma \ref{lem:unibE} and \eqref{eq:unibElow}, 
it follows that 
\[
\begin{aligned}
	&\cE_{k,(0,\delta^2)}(-\log(2\delta^2)) 
	= E_{(0,-T_k\delta^2)}(T_k \delta^2) 
	\leq E_{(0,-T_k\delta^2)}(T_1)\leq C, \\
	&-\cE_{k,(0,\delta^2)}(-\log(3\delta^2/2)) 
	= -E_{(0,-T_k\delta^2)}(T_k \delta^2/2)  \leq -E_{(0,-T_k\delta^2)}(T_{k_0'})\leq C, 
\end{aligned}
\]
where $C>0$ depends only on $n$, $p$, $M$ and $\delta_1$.  
Hence $\cJ_{k,\delta}\leq C \cJ_{k,\delta}^{1/(p+1)}+C$, and so 
$\cJ_{k,\delta}\leq C$. 
Thus, 
$I_\rho[u_k](\tilde x,\tilde t) \leq  C h ( \eps_* + e^{-A^2/C} )$ holds 
if $Q_\rho(\tilde x,\tilde t)\subset Q_{\delta/2}$, 
where the right-hand side is independent of $k$ and $\delta$. 
Then by letting $k\to\infty$, 
Proposition \ref{pro:stcon} shows that 
\[
	I_\rho[\overline{u}] (\tilde x,\tilde t) 
	\leq 
	C h ( \eps_* + e^{-\frac{A^2}{C}} ) 
\]
if $Q_\rho(\tilde x,\tilde t)\subset Q_{\delta/2}$. 
We choose $\eps_*$ small and $A$ large. 
Then, Lemma \ref{lem:epspre} for $\overline{u}$ with 
$I_\rho[u]$ replaced by $I_\rho[\overline{u}]$ guarantees 
the desired estimate. 
\end{proof}

\section{Proof of main theorem}\label{sec:pr}
Let us prove Theorem \ref{th:main}. 
By a scaling and a time shift, we assume that 
$u$ satisfies \eqref{eq:fujita} 
and suppose the contradiction assumption \eqref{eq:seq}. 
We start from showing that blow-up at space infinity does not occur. 
Recall that 
$\delta_1$ and $\eps_0$ are given in 
\eqref{eq:cM} and Theorem \ref{th:epsreg}, respectively.

\begin{lemma}\label{lem:nobuin}
Suppose \eqref{eq:seq}. 
Let $0<\delta\leq \sqrt{-T_1}$. 
Then there exists a constant $R>1$ 
determined by $\delta$ and $\eps_0$ such that 
\[
	|u(x,t)|
	\leq C\delta^{-\frac{2}{p-1}}, \quad 
	x\in \R^n\setminus B_R, \; -\frac{\delta^2}{256}<t<0, 
\]
where $C>0$ is a constant depending only on $n$, $p$, $M$ and $\delta_1$. 
\end{lemma}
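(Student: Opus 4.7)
The plan is to invoke the one–scale $\eps$-regularity Theorem \ref{th:epsreg} at every base point $(x_0,0)$ with $|x_0|$ large enough, so that for each such $x_0$ the bound $\|u\|_{L^\infty(Q_{\delta/16}(x_0,0))}\le C\delta^{-2/(p-1)}$ holds on the cylinder $Q_{\delta/16}(x_0,0)=B_{\delta/16}(x_0)\times(-\delta^2/256,0)$. Since for any $x\in\R^n\setminus B_R$ we may simply take $x_0=x$, the union of these cylinders covers $(\R^n\setminus B_R)\times(-\delta^2/256,0)$. Thus the whole task reduces to verifying the smallness hypothesis \eqref{eq:1scale} for all $|x_0|\ge R$, with $R=R(\delta,\eps_0)$ to be chosen.

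The reduction to an $L^{q_c}$ tail estimate is via a critical H\"older inequality. Since $p>p_S$, one checks $q_c=n(p-1)/2>p+1$, so H\"older with exponent $q_c/(p+1)>1$ gives
\[
\int_{B_{A\delta}(x_0)}|u(x,t)|^{p+1}\,dx
\le C(A\delta)^{n(q_c-p-1)/q_c}\Bigl(\int_{B_{A\delta}(x_0)}|u(x,t)|^{q_c}dx\Bigr)^{(p+1)/q_c}.
\]
Integrating in $t\in(-\delta^2,-\delta^2/2)$ and multiplying by $\delta^{4/(p-1)-n}$, the exponents of $\delta$ combine to
\[
\frac{4}{p-1}-n+2+n-\frac{n(p+1)}{q_c}
=\frac{4}{p-1}+2-\frac{2(p+1)}{p-1}=0,
\]
where $n/q_c=2/(p-1)$ reflects the scaling invariance of $L^{q_c}$. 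The left-hand side of \eqref{eq:1scale} is therefore bounded by $C\,A^{n(q_c-p-1)/q_c}\bigl(\sup_{t\in[-\delta^2,-\delta^2/2]}\int_{B_{A\delta}(x_0)}|u(x,t)|^{q_c}dx\bigr)^{(p+1)/q_c}$.

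For the required uniform smallness of the $L^{q_c}$ integral I use tightness. Because $\delta\le\sqrt{-T_1}$, the closed interval $[-\delta^2,-\delta^2/2]\subset[T_1,0)$ is compact and stays away from the candidate blow-up time $0$. By \eqref{eq:uspace}, $u\in C([-1,0);L^{q_c}(\R^n))$, so the image $\{u(\cdot,t):t\in[-\delta^2,-\delta^2/2]\}$ is a compact subset of $L^{q_c}(\R^n)$, hence uniformly tight: for every $\eta>0$ there exists $R'=R'(\eta,\delta)>0$ with
\[
\int_{|x|\ge R'}|u(x,t)|^{q_c}\,dx<\eta\quad\text{for all }t\in[-\delta^2,-\delta^2/2].
\]
Fix $\eta=\eta(\eps_0,A,n,p)$ so small that $C\,A^{n(q_c-p-1)/q_c}\eta^{(p+1)/q_c}\le\eps_0$, and set $R:=\max(R'(\eta,\delta)+A\delta,\,2)$. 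For $|x_0|\ge R$ we have $B_{A\delta}(x_0)\subset\{|x|\ge R'\}$, so \eqref{eq:1scale} holds, Theorem \ref{th:epsreg} applies, and setting $x_0=x$ yields the claim.

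The main obstacle is really only cosmetic: the delicate point is the exact cancellation of the powers of $\delta$ in the H\"older step, which is forced by $q_c$ being the scaling-critical exponent. Everything else is soft: the smallness of the $L^{p+1}$ tail is transferred to the smallness of the $L^{q_c}$ tail, which follows from the continuity $u\in C([-1,0);L^{q_c})$ on any compact subinterval of $(-1,0)$; no quantitative propagation of the $L^{q_c}$ norm up to the blow-up time is needed.
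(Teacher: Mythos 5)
Your proof is correct and takes essentially the same approach as the paper: verify the one-scale hypothesis \eqref{eq:1scale} for all $|x_0|\geq R$ by bounding the local $L^{p+1}$ integral via H\"older against the $L^{q_c}$ tail, which is small because $u\in C([-1,0);L^{q_c}(\R^n))$ restricted to the compact interval $[-\delta^2,-\delta^2/2]\subset(-1,0)$ stays away from $t=0$, and then cover the exterior region by the cylinders $Q_{\delta/16}(x_0,0)$ given by Theorem~\ref{th:epsreg}. The only cosmetic difference is that the paper applies space-time H\"older to the fixed finite $L^{q_c}(\R^n\times(-\delta^2,-\delta^2/2))$ norm (so the $\delta$-exponent is negative but harmless since $\delta$ is fixed), whereas you slice in time and invoke uniform tightness of the compact image $\{u(\cdot,t)\}$, which makes the $\delta$-powers cancel exactly.
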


\begin{proof}
Let $0<\delta\leq \sqrt{-T_1}$. 
By \eqref{eq:uspace}, we have 
$u\in L^{q_c}( \R^n \times (-\delta^2, -\delta^2/2) )$. 
Then, for $\eps'>0$, there exists $R_{\eps'}>1$ such that 
\[
	\| u\|_{ L^{q_c}( (  (\R^n\setminus B_{R_{\eps'}}) 
	\times (-\delta^2, -\delta^2/2) )} 
	\leq \eps'. 
\]
Let $x_0$ satisfy $B_{A\delta}(x_0) \subset \R^n\setminus B_{R_{\eps'}}$. 
The H\"older inequality gives 
\[
	\delta^{\frac{4}{p-1}-n} 
	\int_{-\delta^2}^{-\delta^2/2} \int_{B_{A\delta}(x_0)} 
	|u|^{p+1} dxdt \leq 
	C \delta^{\frac{4}{p-1}-n+(n+2)(1-\frac{p+1}{q_c})} (\eps')^{p+1}, 
\]
where $C>0$ is independent of $x_0$. 
We fix $\eps'$ such that the right-hand side is smaller than $\eps_0$. 
Then, Theorem \ref{th:epsreg} guarantees 
$\|u\|_{L^\infty(Q_{\delta/16}(x_0,0)} 
\leq C\delta^{-2/(p-1)}$, 
where $C$ is independent of $x_0$. 
By setting $R=R_{\eps'}+A\delta+1$, the desired estimate holds. 
\end{proof}

In view of Lemma \ref{lem:nobuin}, without loss of generality, 
we can assume that the blow-up occurs at $(0,0)\in \R^{n+1}$. 
Let $\overline{u}$ be the blow-up limit given in Lemma \ref{lem:exbul}. 
Then, we show that 
an integral of $|\overline{u}|^{p+1}$ around the origin $(0,0)\in \R^{n+1}$
does not vanish. 
In particular, $\overline{u}\not\equiv 0$. 

\begin{lemma}\label{lem:noshrink}
Let $\eps_0$ be a constant given in Theorem \ref{th:epsreg}. Then, 
\[
	\int_{-1}^0 \int_{B_A}  |\overline{u}(x,t)|^{p+1} dx  dt 
	\geq \eps_0. 
\]
\end{lemma}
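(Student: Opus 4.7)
The plan is to argue by contradiction via the one-scale $\eps$-regularity of Theorem \ref{th:epsreg} applied to the original solution $u$ at the blow-up point $(0,0)$. Recall from the remarks preceding this lemma that $(0,0)$ may be assumed to be a blow-up point of $u$, so $u$ is unbounded in every spacetime neighborhood of $(0,0)$.

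Suppose toward a contradiction that $\int_{-1}^0 \int_{B_A} |\overline{u}|^{p+1}\,dxdt < \eps_0$. Since the integrand is nonnegative, the same strict bound holds when the time integration is restricted to $(-1,-1/2)$. Using the strong convergence $u_k\to\overline{u}$ in $L^{p+1}_\loc(\cQ)$ from Proposition \ref{pro:stcon}, I would conclude that
\[
	\int_{-1}^{-1/2} \int_{B_A} |u_k|^{p+1}\,dxdt < \eps_0
\]
for all sufficiently large $k$. Set $\delta_k:=\sqrt{-T_k}$; since $\{T_k\}$ is monotone increasing to $0$, one has $0<\delta_k\leq\sqrt{-T_1}$. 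A direct change of variables $y=\sqrt{-T_k}\,x$, $s=-T_k t$ in the definition \eqref{eq:ukdef}, together with the dimensional identity $2(p+1)/(p-1)-(n+2)=4/(p-1)-n$, yields the scale-invariance
\[
	\int_{-1}^{-1/2} \int_{B_A} |u_k|^{p+1}\,dxdt
	= \delta_k^{\frac{4}{p-1}-n}
	\int_{-\delta_k^2}^{-\delta_k^2/2} \int_{B_{A\delta_k}} |u|^{p+1}\,dyds.
\]
The right-hand side is precisely the left-hand side of the hypothesis \eqref{eq:1scale} of Theorem \ref{th:epsreg} with $x_0=0$ and $\delta=\delta_k$, so the theorem applies and gives
\[
	\|u\|_{L^\infty(Q_{\delta_k/16}(0,0))} \leq C\delta_k^{-\frac{2}{p-1}} < \infty.
\]
This boundedness in a spacetime neighborhood of $(0,0)$ contradicts the assumption that $(0,0)$ is a blow-up point of $u$, completing the argument.

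The conceptual content reduces to the exact scale-invariance of the critical Morrey-type quantity under the parabolic rescaling $u\mapsto\lambda^{2/(p-1)}u(\lambda\cdot,\lambda^2\cdot)$, matched against the fixed threshold $\eps_0$ supplied by Theorem \ref{th:epsreg}; at this step neither the backward self-similarity of $\overline{u}$ (Lemma \ref{lem:BSS}) nor the vanishing of the defect measure is needed. I therefore do not expect any substantial obstacle: the sole admissibility condition $\delta_k\leq\sqrt{-T_1}$ for Theorem \ref{th:epsreg} is automatic from $|T_k|\leq|T_1|$, and the rest is a routine change of variables combined with the already established strong $L^{p+1}_\loc$ convergence $u_k\to\overline{u}$.
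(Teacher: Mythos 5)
Your proof is correct and essentially the same as the paper's: both apply the one-scale $\eps$-regularity (Theorem \ref{th:epsreg}) at $\delta_k=\sqrt{-T_k}$ together with the change of variables $\delta_k^{4/(p-1)-n}\int_{-\delta_k^2}^{-\delta_k^2/2}\int_{B_{A\delta_k}}|u|^{p+1}=\int_{-1}^{-1/2}\int_{B_A}|u_k|^{p+1}$ and Proposition \ref{pro:stcon} to pass to the limit. The paper phrases it directly via the contrapositive of Theorem \ref{th:epsreg} (since $u\notin L^\infty(Q_{\delta/16})$ at the blow-up point, the integral on $(-\delta^2,-c\delta^2)$ exceeds $\eps_0$, then $k\to\infty$ and $c\to 0$), whereas you argue by contradiction and only need the fixed interval $(-1,-1/2)$; your version is a touch cleaner since the extra parameter $c\to0$ is unnecessary once one observes $\int_{-1}^0\geq\int_{-1}^{-1/2}$.
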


\begin{proof}
Let $0<c<1/2$. 
Since $u\not\in L^\infty(Q_{\delta/16})$,  
the contraposition of Theorem \ref{th:epsreg} gives 
\[
	\eps_0 < 
	\delta^{\frac{4}{p-1}-n} 
	\int_{-\delta^2}^{-c \delta^2} \int_{B_{A\delta}} |u(x,t)|^{p+1} dxdt 
\]
for any $0<\delta\leq \sqrt{-T_1}$. 
Substituting $\delta=\sqrt{-T_k}$ into the above inequality 
and changing the variables yield 
\[
	\int_{-1}^{-c} \int_{B_A} |u_k(y,s)|^{p+1} dyds > \eps_0 
\]
for $k\geq k_1$. 
By letting $k\to\infty$ with the aid of Proposition \ref{pro:stcon}, 
and then by letting $c\to 0$, the lemma follows. 
\end{proof}

In what follows, we prove $\overline{u}\equiv0$, 
contrary to Lemma \ref{lem:noshrink}. 
Since we suppose \eqref{eq:seq}, some subsequence still denoted by 
$u(\cdot,T_k)$ converges to a function weakly in $L^{q_c}(\R^n)$. 
We define $u(\cdot,0)$ and $u_k(\cdot,0)$ 
by this weak limit and \eqref{eq:ukdef} with $t=0$, respectively. 
Moreover, 
from the weak lower semicontinuity of the norm, it follows that 
\begin{equation}\label{eq:fibd}
	\|u_k(\cdot,0)\|_{L^{q_c}(\R^n)}
	= \|u(\cdot,0)\|_{L^{q_c}(\R^n)}
	\leq \liminf_{k\to\infty}\|u(\cdot,T_k)\|_{L^{q_c}(\R^n)} \leq M 
\end{equation}
for $k\geq1$. 
Thus, a subsequence $u_k(\cdot,0)$ converges to 
a function, denoted by $\overline{u}(\cdot,0)$, weakly in $L^{q_c}(\R^n)$. 
Then, $\overline{u}$ vanishes at the final time.

\begin{lemma}\label{lem:finbul}
$\overline{u}(\cdot,0)=0$ a.e.~in $\R^n$. 
\end{lemma}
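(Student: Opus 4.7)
The plan is to show that $u_k(\cdot,0)\to 0$ strongly in $L^{q_c}_\loc(\R^n)$; since by construction $u_k(\cdot,0)\rightharpoonup \overline{u}(\cdot,0)$ weakly in $L^{q_c}(\R^n)$, the uniqueness of weak limits will then force $\overline{u}(\cdot,0)=0$ a.e.

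The key observation is that evaluating \eqref{eq:ukdef} at $t=0$ gives
\[
u_k(x,0)=(-T_k)^{1/(p-1)}u(\sqrt{-T_k}\,x,0),
\]
so $u_k(\cdot,0)$ is simply a critical spatial rescaling of the single $L^{q_c}$ function $u(\cdot,0)$, at the scale $\sqrt{-T_k}\to 0$, concentrating around the origin. Using $q_c/(p-1)=n/2$ and the change of variables $y=\sqrt{-T_k}\,x$, for every $R>0$ one obtains
\[
\int_{B_R}|u_k(x,0)|^{q_c}\,dx=\int_{B_{\sqrt{-T_k}\,R}}|u(y,0)|^{q_c}\,dy.
\]
By \eqref{eq:fibd} we know $u(\cdot,0)\in L^{q_c}(\R^n)$, so $|u(\cdot,0)|^{q_c}\in L^1(\R^n)$. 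Since $\sqrt{-T_k}\,R\to 0$, the balls $B_{\sqrt{-T_k}\,R}$ shrink to the single point $\{0\}$ (of Lebesgue measure zero), and the absolute continuity of the Lebesgue integral then forces the right-hand side to tend to $0$ as $k\to\infty$. Hence $u_k(\cdot,0)\to 0$ strongly in $L^{q_c}(B_R)$ for every $R>0$.

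To close, the strong convergence just obtained in particular implies weak convergence to $0$ in $L^{q_c}(B_R)$, while the weak convergence $u_k(\cdot,0)\rightharpoonup \overline{u}(\cdot,0)$ in $L^{q_c}(\R^n)$ restricts to weak convergence on each $B_R$; uniqueness of the weak limit then gives $\overline{u}(\cdot,0)=0$ a.e.~in $B_R$, and letting $R\to\infty$ yields the lemma. There is essentially no obstacle here: the whole argument is a direct consequence of the scaling invariance of the $L^{q_c}$ norm together with the absolute continuity of the integral of $|u(\cdot,0)|^{q_c}$, the only conceptual point being that at the final time $t=0$ the rescaling \eqref{eq:ukdef} is a pure concentration zoom on a fixed $L^{q_c}$ function, so $L^{q_c}_\loc$ mass vanishes automatically.
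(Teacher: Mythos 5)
Your proof is correct and is essentially the same as the paper's: both exploit that at $t=0$ the rescaling \eqref{eq:ukdef} is a pure critical spatial zoom of the fixed function $u(\cdot,0)\in L^{q_c}(\R^n)$, so that the $L^{q_c}$ mass on any fixed ball equals the mass of $u(\cdot,0)$ on the shrinking ball $B_{\sqrt{-T_k}R}$, which vanishes by absolute continuity. The paper packages this by testing against $\varphi\in C_0^\infty$ and applying H\"older to get $\|u(\cdot,0)\|_{L^{q_c}(B_{\sqrt{-T_k}R})}\to0$, whereas you state it directly as strong $L^{q_c}_\loc$ convergence of $u_k(\cdot,0)$ to $0$ and then invoke uniqueness of weak limits; the underlying mechanism is identical.
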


\begin{proof}
For $R>0$, we take $\varphi\in C^\infty_0(\R^n)$ such that 
$\supp\varphi\subset B_R$. 
Since $u_k(\cdot,0)\to \overline{U}(\cdot,0)$ weakly in $L^{q_c}(\R^n)$, 
the first term in the right-hand side of 
\[
\begin{aligned}
	\int_{\R^n} \overline{u}(x,0) \varphi(x) dx 
	= \int_{\R^n} (\overline{u}(x,0)-u_k(x,0)) \varphi(x) dx 
	+\int_{\R^n} u_k(x,0) \varphi(x) dx
\end{aligned}
\]
converges to $0$ as $k\to\infty$. On the other hand, the change of variables, 
the H\"older inequality and \eqref{eq:fibd} yield 
\[
\begin{aligned}
	&\left| \int_{\R^n} u_k(x,0) \varphi(x) dx\right|  \leq  
	C \int_{B_R} |u_k(x,0)|  dx \\
	&= C (-T_k)^{\frac{1}{p-1}-\frac{n}{2}} \int_{B_{\sqrt{-T_k} R}} |u(y,0)| dy 
	\leq CR^{n-\frac{2}{p-1}} \|u(\cdot,0)\|_{L^{q_c}(B_{\sqrt{-T_k} R})} 
	\to 0
\end{aligned}
\]
as $k\to\infty$. 
Since $R$ is arbitrary, the lemma follows. 
\end{proof}

By Lemma \ref{lem:blepsreg}, 
we show the boundedness of $\overline{u}$ far from the singular point.

\begin{lemma}\label{lem:bddovu}
For any $0<\delta\leq \sqrt{-T_1}$, 
there exists a constant $R>1$ 
determined by $\delta$ and $\eps_0$ such that 
\[
	|\overline{u}(x,t)|
	\leq C\delta^{-\frac{2}{p-1}}
	\quad \mbox{ for a.e.~}
	x\in \R^n\setminus B_R, \; -\frac{\delta^2}{256}<t<0, 
\]
where $C>0$ is a constant depending only on 
$n$, $p$, $M$ and $\delta_1$. 
\end{lemma}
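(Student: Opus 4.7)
The plan is to imitate the proof of Lemma \ref{lem:nobuin}, but apply the refined $\eps$-regularity for the blow-up limit (Lemma \ref{lem:blepsreg}) at the point $(x_0,0)$ for each $x_0$ in the tail $\R^n\setminus B_R$. Fix $0<\delta\leq \sqrt{-T_1}$. The task reduces to verifying the smallness condition
\[
	\delta^{\frac{4}{p-1}-n} \int_{-\delta^2}^{-\delta^2/2} \int_{B_{A\delta}(x_0)} |\overline{u}|^{p+1} \, dx \, dt \leq \eps_{*},
\]
after which Lemma \ref{lem:blepsreg} will supply the bound $\|\overline{u}\|_{L^\infty(Q_{\delta/16}(x_0,0))} \leq C \delta^{-2/(p-1)}$; a covering of $(\R^n\setminus B_R)\times (-\delta^2/256,0)$ by such cylinders then concludes the proof.

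The key difference from the setting of Lemma \ref{lem:nobuin} is that, a priori, we do not have a space-time $L^{q_c}$ estimate for $\overline{u}$ itself --- only snapshot integrability at $t=-1$, namely $\|\overline{u}(\cdot,-1)\|_{L^{q_c}(\R^n)}\leq M$ as recorded in \eqref{eq:oubar1M}. To transfer this to the times $t\in(-\delta^2,-\delta^2/2)$, I will invoke the backward self-similarity from Lemma \ref{lem:BSS} in the form
\[
	\overline{u}(x,t)=(-t)^{-\frac{1}{p-1}}\overline{u}\bigl(x/\sqrt{-t},-1\bigr) \quad \mbox{ for a.e. } (x,t)\in\cQ.
\]
The change of variables $y=x/\sqrt{-t}$ transforms the spatial integral as
\[
	\int_{B_{A\delta}(x_0)} |\overline{u}(x,t)|^{p+1} \, dx
	= (-t)^{\frac{n}{2}-\frac{p+1}{p-1}} \int_{B_{A\delta/\sqrt{-t}}(x_0/\sqrt{-t})} |\overline{u}(y,-1)|^{p+1} \, dy,
\]
and for $t\in(-\delta^2,-\delta^2/2)$ and $|x_0|\geq R$ the rescaled ball has radius at most $A\sqrt{2}$ and center of norm at least $R/\delta$, so it lies in the tail $\R^n\setminus B_{R/\delta-A\sqrt{2}}$.

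Given $\eps'>0$, the $L^{q_c}$ bound supplies $\tilde R_{\eps'}>0$ with $\|\overline{u}(\cdot,-1)\|_{L^{q_c}(\R^n\setminus B_{\tilde R_{\eps'}})}\leq \eps'$, and H\"older's inequality together with $q_c>p+1$ (which holds for $p>p_S$) will bound the inner integral by $CA^{n(1-(p+1)/q_c)} (\eps')^{p+1}$, provided $R$ is chosen so that $R/\delta-A\sqrt{2}\geq \tilde R_{\eps'}$. A direct computation then shows that the $\delta$-powers cancel exactly, namely $\delta^{4/(p-1)-n}\int_{-\delta^2}^{-\delta^2/2}(-t)^{n/2-(p+1)/(p-1)}\,dt\leq C$ (a manifestation of scaling invariance), so the smallness hypothesis follows upon choosing $\eps'$ small enough. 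I expect the main obstacle is really the identification of self-similarity as the right tool to bridge the gap between the snapshot integrability at $t=-1$ (the only global control available for $\overline{u}$) and the local space-time smallness demanded by the $\eps$-regularity; once this is in place, the rest is a routine check of scaling and an application of H\"older.
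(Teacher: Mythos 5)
Your proposal is correct and is essentially the same as the paper's proof: both rest on the self-similarity of $\overline{u}$ from Lemma \ref{lem:BSS}, the bound $\|\overline{u}(\cdot,-1)\|_{L^{q_c}(\R^n)}\leq M$ from \eqref{eq:oubar1M}, and the $\eps$-regularity of Lemma \ref{lem:blepsreg}. The paper packages the same calculation as ``$\overline{u}\in L^\infty(-\infty,0;L^{q_c}(\R^n))$, hence $\overline{u}\in L^{q_c}(\R^n\times(-\delta^2,-\delta^2/2))$, now repeat the Lemma \ref{lem:nobuin} argument,'' whereas you carry out the change of variables $y=x/\sqrt{-t}$ explicitly inside the smallness integral; the scaling cancellation you check (the net $\delta$-power is $0$) is precisely what justifies the paper's shortcut.
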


\begin{proof}
Define $\overline{U}(z):=\overline{u}(z,-1)$. 
By Lemma \ref{lem:BSS},  setting $\lambda = (-t)^{-1/2}$ gives 
$\overline{u}(x,t) = (-t)^{-\frac{1}{p-1}} \overline{U}(x/\sqrt{-t})$. 
From the scaling invariance of $L^{q_c}$ norm together with \eqref{eq:oubar1M}, 
it follows that 
\begin{equation}\label{eq:ubarbddM}
	\| \overline{u}\|_{L^\infty(-\infty,0; L^{q_c}(\R^n))}
	= \|\overline{U}\|_{L^{q_c}(\R^n)}\leq M 
\end{equation}
and 
$\overline{u}\in L^\infty(-\infty,0; L^{q_c}(\R^n))$. 
Thus,  $\overline{u}\in L^{q_c} ( \R^n\times (-\delta^2, -\delta^2/2))$ 
for $0<\delta\leq \sqrt{-T_1}$. 
By the same argument as in Lemma \ref{lem:nobuin} 
based on Lemma \ref{lem:blepsreg}, the desired bound follows. 
\end{proof}

By a similar argument of Wang \cite[Lemma 3.3]{Wa08} 
(see also \cite[Lemma 5.12]{MTpre}), 
we give a partial regularity for $\overline{u}$. 
For $t<0$, define 
\[
	\Sigma(t):= \{ x\in \R^n; \overline{u}(\cdot,t) \not\in L^\infty(B_\rho(x))
	\mbox{ for all }\rho>0 \}. 
\]
We denote by $\card(\Sigma(t))$ the cardinality of $\Sigma(t)$.

\begin{lemma}\label{lem:pareg}
There is $C>0$ depending only on 
$n$, $p$, $M$ and $\delta_1$ such that 
\[
	\card( \Sigma(t) ) \leq C
	\quad \mbox{ for any $t<0$}.
\]
\end{lemma}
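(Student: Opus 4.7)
The plan is to combine the $\eps$-regularity of Lemma \ref{lem:blepsreg} with the backward self-similarity of $\overline{u}$ (Lemma \ref{lem:BSS}) and the uniform bound $\|\overline{U}\|_{L^{q_c}(\R^n)}\le M$ from \eqref{eq:ubarbddM}, where $\overline{U}(z):=\overline{u}(z,-1)$. Using the self-similar representative $\overline{u}(x,t)=(-t)^{-1/(p-1)}\overline{U}(x/\sqrt{-t})$ provided by Lemma \ref{lem:BSS}, one checks that $x\in\Sigma(t)$ if and only if $x/\sqrt{-t}\in\Sigma(-1)$, so it suffices to bound $\card(\Sigma(-1))$ by a constant depending only on the admissible parameters.

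Given distinct $x_1,\dots,x_N\in\Sigma(-1)$, set $\rho_N:=\min_{i\neq j}|x_i-x_j|>0$ and pick $0<\delta<\min(\rho_N/(4A),\sqrt{-T_1})$. With $t_0:=-1+\delta^2/512$, the time $t=-1$ lies strictly inside $(t_0-\delta^2/256,t_0)$, and the self-similar formula propagates the unboundedness of $\overline{U}$ on $B_{\delta/16}(x_j)$ into the unboundedness of $\overline{u}$ over $Q_{\delta/16}(x_j,t_0)$. The contrapositive of Lemma \ref{lem:blepsreg} then yields, for every $j$,
\[
\delta^{\frac{4}{p-1}-n}\int_{t_0-\delta^2}^{t_0-\delta^2/2}\int_{B_{A\delta}(x_j)}|\overline{u}|^{p+1}\,dx\,dt>\eps_*.
\]
Because $p>p_S$ gives $p+1<q_c$, H\"older's inequality with exponent $q_c/(p+1)$ bounds the spatial integral by $C\delta^{n-2(p+1)/(p-1)}\|\overline{u}(\cdot,t)\|_{L^{q_c}(B_{A\delta}(x_j))}^{p+1}$; the exponents cancel exactly against the remaining factor $\delta^{\frac{4}{p-1}-n+2}$, and the scale-invariance of the $L^{q_c}$-norm together with the self-similarity (applied for $\sqrt{-t}$ close to $1$ when $\delta$ is small) transforms the estimate into
\[
\|\overline{U}\|_{L^{q_c}(B_{2A\delta}(x_j))}^{p+1}\ge\frac{\eps_*}{C}.
\]

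By the choice $\delta<\rho_N/(4A)$, the balls $B_{2A\delta}(x_j)$ are pairwise disjoint, whence $\sum_j\|\overline{U}\|_{L^{q_c}(B_{2A\delta}(x_j))}^{q_c}\le\|\overline{U}\|_{L^{q_c}(\R^n)}^{q_c}\le M^{q_c}$. Applying Jensen's inequality to the concave map $s\mapsto s^{(p+1)/q_c}$ yields
\[
\frac{N\eps_*}{C}\le\sum_{j=1}^N\|\overline{U}\|_{L^{q_c}(B_{2A\delta}(x_j))}^{p+1}\le N^{1-(p+1)/q_c}M^{p+1},
\]
and rearranging gives the desired uniform bound $N\le(CM^{p+1}/\eps_*)^{q_c/(p+1)}$. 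The main obstacle is the middle step: one must justify rigorously that the a.e.-defined limit $\overline{u}$ has infinite essential supremum over the full space-time cylinder $Q_{\delta/16}(x_j,t_0)$ whenever $x_j\in\Sigma(-1)$. This requires fixing the self-similar representative of $\overline{u}$ on a set of full measure and verifying that, for small $\delta$, the map $(x,t)\mapsto x/\sqrt{-t}$ sends $Q_{\delta/16}(x_j,t_0)$ onto a neighborhood of $B_{\delta/16}(x_j)$ on which $\overline{U}$ is still essentially unbounded; once this is in place, the remaining estimates are routine bookkeeping with scaling exponents.
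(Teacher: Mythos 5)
Your proof is correct and follows essentially the same strategy as the paper: apply the contrapositive of the $\eps$-regularity Lemma \ref{lem:blepsreg} at each putative singular point, convert the resulting lower bound on the $|\overline{u}|^{p+1}$ integral into a lower bound on a local $L^{q_c}$ integral by H\"older (the $\delta$-exponents cancel precisely because $q_c$ is the critical exponent), and then sum over disjoint balls using $\|\overline{U}\|_{L^{q_c}(\R^n)}\le M$ from \eqref{eq:ubarbddM}. The minor algebraic variations are harmless: the paper works at a general time $t_0<0$ and raises the inequality directly to the power $q_c/(p+1)$ before summing, whereas you reduce to $t=-1$ via self-similarity and then invoke Jensen's inequality for the concave map $s\mapsto s^{(p+1)/q_c}$; both give the same bound $N\le CM^{q_c}\eps_*^{-q_c/(p+1)}$.

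Concerning the ``main obstacle'' you flag at the end: it is real but is already resolved by your own setup, and you should not leave it as an open worry. Because Lemma \ref{lem:BSS} gives $\overline{u}$ the self-similar representative $\overline{u}(x,t)=(-t)^{-1/(p-1)}\overline{U}(x/\sqrt{-t})$, the sets $\Sigma(t)$ satisfy $\Sigma(t)=\sqrt{-t}\,\Sigma(-1)$, and (as you note) the parabolic cylinder $Q_{\delta/16}(x_j,t_0)$ with your shifted $t_0=-1+\delta^2/512$ contains the slice $t=-1$ in its interior. Under the change of variables $(x,t)\mapsto (x/\sqrt{-t},t)$ the cylinder sweeps out, for each $t$ near $-1$, a ball of radius comparable to $\delta/16$ whose center converges to $x_j$ as $t\to -1$; for $\delta$ small relative to $\min_{i\neq j}|x_i-x_j|$ and to $\max_j|x_j|$ (a choice you are free to make since $N$ is fixed for the contradiction argument and the final bound is independent of it), the union contains a fixed ball around $x_j$. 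Thus essential boundedness of $\overline{u}$ on the cylinder forces essential boundedness of $\overline{U}$ near $x_j$, contradicting $x_j\in\Sigma(-1)$, and the contrapositive you use is legitimate. The paper handles the same point more tersely (applying the lemma directly at $t_0$ and letting the time slices just below $t_0$ propagate the boundedness back to $\overline{U}$ via self-similarity), but the content is identical, and your shift of $t_0$ is in fact a slightly cleaner way to make the step explicit.
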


\begin{proof}
Fix $t_0<0$. 
Without loss of generality, we can assume $\Sigma(t_0)\neq \emptyset$. 
Let $x_0\in \Sigma(t_0)$. 
Then Lemma \ref{lem:blepsreg} implies that 
\[
\begin{aligned}
	\eps_* &< \rho^{\frac{4}{p-1}-n} 
	\int_{t_0-\rho^2}^{t_0-\rho^2/2} \int_{B_{A\rho}(x_0)} 
	|\overline{u}|^{p+1} dxdt  \\
	&\leq 
	C \rho^{\frac{4}{p-1}-n + (n+2) (1-\frac{p+1}{q_c})} 
	\left( \int_{t_0-\rho^2}^{t_0-\rho^2/2} \int_{B_{A\rho}(x_0)} 
	|\overline{u}|^{q_c} dxdt \right)^\frac{p+1}{q_c} 
\end{aligned}
\]
for any $0<\rho\leq \sqrt{-T_1}$. Thus, 
\begin{equation}\label{eq:estdxdt}
	\eps_*^\frac{q_c}{p+1}  \leq 
	C \rho^{-2} \int_{t_0-\rho^2}^{t_0-\rho^2/2} \int_{B_{A\rho}(x_0)} 
	|\overline{u}|^{q_c} dxdt 
\end{equation}
for a constant $C>0$ depending only on $n$, $p$, $M$ and $\delta_1$.

Let $S=\{x_1,\ldots,x_N\}$ be any finite subset of $\Sigma(t_0)$ and 
let $0<\rho_S\leq \sqrt{-T_1}$ satisfy that 
the family $\{B_{A\rho_S}(x_i) \}_{1\leq i\leq N}\}$ is pairwise disjoint. 
Then by substituting $x_0=x_i$ and $\rho=\rho_S$ into \eqref{eq:estdxdt}, 
and then by summing up from $i=1$ to $i=N$ and 
using \eqref{eq:ubarbddM}, 
we see that 
\[
\begin{aligned}
	N\eps_*^\frac{q_c}{p+1} 
	&\leq 
	C \rho_S^{-2} \sum_{i=1}^N \int_{t_0-\rho_S^2}^{t_0-\rho_S^2/2} 
	\int_{B_{A\rho_S}(x_i)}  |\overline{u}|^{q_c} dx dt \\
	&\leq 
	C \rho_S^{-2} \int_{t_0-\rho_S^2}^{t_0-\rho_S^2/2} 
	\int_{\R^n}  |\overline{u}|^{q_c} dx dt 
	\leq  C M^{q_c}. 
\end{aligned}
\]
This yields $N\leq C M^{q_c} \eps_*^{-q_c/(p+1)}$, 
where $C$ depends only on $n$, $p$, $M$ and $\delta_1$. 
Therefore, $\card(\Sigma(t_0))\leq C M^{q_c} \eps_*^{-q_c/(p+1)}$. 
This completes the proof. 
\end{proof}

We are now in a position to prove Theorem \ref{th:main}.

\begin{proof}[Proof of Theorem \ref{th:main}]
To obtain a contradiction, we suppose \eqref{eq:seq}. 
By Lemma \ref{lem:nobuin}, without loss of generality, 
we can assume that the blow-up of $u$ occurs at 
$(0,0)\in \R^{n+1}$. 
Then, Lemma \ref{lem:noshrink} shows that 
the blow-up limit $\overline{u}$ must satisfy $\overline{u}\not \equiv 0$.

Based on Lemmas \ref{lem:finbul} and \ref{lem:bddovu}, 
we apply the backward uniqueness of 
Escauriaza, Seregin and \v{S}ver\'{a}k \cite[Theorem 5.1]{ESS03} 
to see that 
\[
	\overline{u}(x,t) \equiv 0 
	\quad \mbox{ for }
	x\in \R^n\setminus B_R, \; -\frac{\delta^2}{256}<t\leq 0. 
\]
Since $\overline{u}(x,-\delta^2/1024) 
= (\delta^2/1024)^{-1/(p-1)} \overline{U}(32x/\delta)$, 
we have $\overline{U}\equiv 0$ on $\R^n\setminus B_{32R/\delta}$. 
From direct computations and Lemma \ref{lem:pareg} with $t=-1$, 
it follows that $\overline{U}=\overline{U}(z)$ satisfies 
\[
	\Delta \overline{U} -\frac{1}{2}z \cdot \nabla \overline{U}
	-\frac{1}{p-1} \overline{U} +|\overline{U}|^{p-1}\overline{U} = 0
\]
in the classical sense except for the finite subset of $\R^n$. 
Then by applying the unique continuation for elliptic equations 
(see \cite{Ar57} for instance) in connected subsets of $\R^n$, 
we obtain $\overline{U}\equiv 0$ in $\R^n$. 
Hence we conclude that $\overline{u}\equiv 0$ on $\R^n\times(-\infty,0]$, 
a contradiction. 
The proof is complete. 
\end{proof}

\section*{Acknowledgments}
The first author was supported in part by JSPS KAKENHI 
Grant Numbers 17K05312, 21H00991 and 21H04433. 
The second author was supported in part 
by JSPS KAKENHI Grant Numbers 22H01131, 22KK0035 and 23K12998.

\end{document}